\newcommand{\co}{\mathfrak{c}}
\newcommand{\re}{\mathbb{R}}
\newcommand{\com}{\mathbb{C}}
\newcommand{\bege}{\begin{equation}}
\newcommand{\enge}{\end{equation}}
\newcommand{\A}{{\mathcal{A}}}
\newcommand{\B}{{\mathcal{B}}}
\newcommand{\End}{\text{End}}
\newcommand{\beq}{\begin{eqnarray}}
\newcommand{\eeq}{\end{eqnarray}}
\newcommand{\benu}{\begin{enumerate}}
\newcommand{\enu}{\end{enumerate}}
\newcommand{\cs}{\Gamma_c^{\infty}(E)}
\newcommand{\DD}{D_{\bullet}}
\newcommand{\DF}{\{D_x\}_{x\in X}}
\newcommand{\CS}{C^*}
\newcommand{\WHD}{\widehat{D}}
\newcommand{\gi}{\Gamma^{\infty}(E)^G}
\newcommand{\bN}{\mathbb{N}}
\newcommand{\bZ}{\mathbb{Z}}
\newcommand{\begeq}[1]{\begin{equation} \label{#1}}
\newcommand{\eneq}{\end{equation}}
\DeclareMathOperator{\SF}{SF}
\DeclareMathOperator{\Dom}{Dom}
\DeclareMathOperator{\ee}{\mathcal{E}}
\DeclareMathOperator{\Index}{Index}
\DeclareMathOperator{\Ind}{Ind}
\DeclareMathOperator{\Sf}{sf}
\DeclareMathOperator{\loc}{loc}
\DeclareMathOperator{\tr}{tr}
\theoremstyle{definition} 
\newtheorem{definition}{Definition}
\newtheorem{example}[definition]{Example}
\newtheorem{remark}[definition]{Remark}
\theoremstyle{plain} 
\newtheorem{lemma}[definition]{Lemma}
\newtheorem{corollary}[definition]{Corollary}
\newtheorem{proposition}[definition]{Proposition}
\newtheorem{theorem}[definition]{Theorem}
\numberwithin{equation}{section}
\numberwithin{definition}{section}
\title{Equivariant Spectral Flow for Families of Dirac-type Operators}
\author{Peter Hochs\footnote{p.hochs@math.ru.nl}\;   and Aquerman Yanes\footnote{a.k.yanes@math.ru.nl, aquermanym@gmail.com}\\ Institute for Mathematics, Astrophysics and Particle Physics\\ Radboud University}
\begin{document}

\maketitle

\begin{abstract}
In the setting of a proper, cocompact action by a locally compact, unimodular group $G$ on a Riemannian manifold, we construct equivariant spectral flow of paths of Dirac-type operators. This takes values in the $K$-theory of the group $C^*$-algebra of $G$. In the case where $G$ is the fundamental group of a compact manifold, the summation map maps equivariant spectral flow on the universal cover to classical spectral flow on the base manifold. We obtain ``index equals spectral flow'' results. In the setting of a smooth path of $G$-invariant Riemannian metrics on a $G$-spin manifold, we show that the equivariant spectral flow of the corresponding path of spin Dirac operators relates delocalised $\eta$-invariants and $\rho$-invariants for different positive scalar curvature metrics to each other.
\end{abstract}

\tableofcontents

\section{Introduction}

Let $M$ be a compact Riemannian manifold and $E\to M$ a Hermitian vector bundle equipped with a Dirac-type operator $D\colon \Gamma^{\infty}(M,E)\to L^2(M,E)$. It is well-known that $D$ is an essentially self-adjoint operator and that it satisfies a Lichnerowicz-Weitzenböck formula 
\begin{equation*}
	D^2=\Delta+\kappa,
\end{equation*}
where $\Delta$ is a Laplace-type operator and $\kappa$ is a self-adjoint endomorphism of $E$ built out of the curvature. Since $D$ is an elliptic operator on a compact manifold,  it is Fredholm, and hence it has an index
\begin{equation*}
	\Index(D)\in \mathbb{Z}.
\end{equation*}
It is well-known that there exists a topological formula one can use in order to compute this index (\cite{atiyahsingercompact}), and that it vanishes when the dimension of $M$ is odd. One may consider, instead, in the odd-dimensional case, a suitably continuous family of Dirac-type operators $\{D_t\colon\Gamma^{\infty}(M,E)\to L^2(M,E)\}_{t\in \re}$, parametrised by the real line. The so-called Dirac--Schrödinger operator $-i\partial_t-i\DD$ on the Hilbert space $L^2(M\times \re,E\boxtimes \com)$, which is an elliptic differential operator on the (even dimensional) manifold $M\times\re$, can then be considered. If the family $\{D_t\}_{t\in \re}$ is suitably invertible outside a compact interval $K\subset \re$, then $-i\partial_t-i\DD$ is Fredholm (\cite{MR0569070, anghelcallias, robbinsalamon}). 

It is possible to associate an integer, which we call \textit{classical spectral flow}, to the family $\{D_t\}_{t\in\re}$, which represents the net number of positive eigenvalues of $D_t$ changing sign as $t$ varies through $\re$ (see e.g.: \cite{apsiii, robbinsalamon, phil, wahl, MR4605812}). We denote this number by
\begin{equation*}
	\Sf(\{D_t\}_{t\in\re})\in \mathbb{Z}.
\end{equation*}
It turns out that the Fredholm index of the Dirac--Schrödinger operator $-i\partial_t-i\DD$ on $L^2(M\times \re,E\boxtimes \com)$ coincides with the classical spectral flow of the family $\{D_t\}_{t\in \re}$ (\cite{robbinsalamon, wahl, AW11}), that is,

\begin{equation}\label{indexeqsf}
	\Index(-i\partial_t-i\DD)=\Sf(\{D_t\}_{t\in\re})\in\mathbb{Z}.
\end{equation}

In the context of $K\!K$-theory, equation (\ref{indexeqsf}) can be generalised as follows: let $\DF$ be a family of Dirac-type operators parametrised by a connected, complete Riemannian manifold $X$, and let $P_X$ be a symmetric, elliptic, first-order differential operator on a Hermitian vector bundle $F \to X$. If $P_X$ has finite propagation speed, then it is well-known that it defines a class in $K$-homology (\cite[\S 10.6]{higsonroe}):
\begin{equation*}
	[P_X]\in K\!K^1(C_0(X),\com).
\end{equation*}
 The family $\DF$ defines a regular self-adjoint and Fredholm operator $\DD$ on $C_0(\re,L^2(M,E))$, which in turn defines a class 
 \begin{equation*}
	\SF(\DD)\in K\!K^1(\com,C_0(X)),
\end{equation*}
 called \textit{$K\!K$-theoretic spectral flow} (\cite{kl1,koen}). Under suitable conditions, the operator $P_X-i\DD$ on $L^2(M,E)\otimes L^2(X, F)$ can be proven to be Fredholm, and its index given by
\begin{equation}\label{eqtheindthe}
	\Index(P_X-i\DD)=\SF(\DD)\otimes_{C_0(X)}[P_X]\in K\!K^0(\com,\com)\simeq \mathbb{Z},
\end{equation}
so that the right-hand of (\ref{eqtheindthe}) side can be perceived as a $K\!K$-theoretic approach to defining spectral flow. 

We note here that these constructions are dependent on the fact that the manifold $M$ is \textit{compact}. In this paper, we consider a possibly non-compact Riemannian manifold $M$ equipped with a proper, isometric and cocompact action by a unimodular, locally compact group $G$. This setup allows us to define an equivariant version of spectral flow for a  family of $G$-equivariant Dirac-type operators on $M$, living in the $K$-theory of the (maximal or reduced) group $C^*$-algebra $\CS(G)$ (Definition \ref{defgspecflow}). 
In our construction, we build on the $K\!K$-theoretic framework of spectral flow in the general context of operators on Hilbert $C^*$-modules \cite{koen, kl1}. We also adapt and apply techniques from  \cite{haoguo} related to Hilbert $C^*$-modules associated to proper group actions.

Other notions of equivariant spectral flow were constructed and applied  in \cite{IJW21, LW22} for compact groups, \cite{LP03} for fundamental groups of compact manifolds acting on their universal covers, and \cite{MR2200374} for families of equivariant Toeplitz operators. A higher version of spectral flow for families of operators was constructed in  \cite{DZ96, DZ98}. This notion was employed in \cite{MR4316135} in order to define an equivariant notion of spectral flow in the context of actions by compact Lie groups. A von Neumann-algebraic version of spectral flow was studied in \cite{AW11}. A general definition of noncommutative spectral flow for paths of operators on Hilbert $C^*$-modules was considered in \cite{wahl}.

This paper is structured as follows: we start in Section \ref{secpreliminaries} by laying out the preliminaries and basic constructions used throughout the paper. In Section \ref{sec results} we discuss and present all the main results of the paper, while the following sections are devoted to laying out the proofs of these results. More precisely, we show in Section \ref{secgspectralflow} that equivariant spectral flow for a path of Dirac-type operators is well-defined under Assumptions \ref{assump1}-\ref{assump3} (Definition \ref{defgspecflow}), and that it refines classical spectral flow under an integration map (Proposition \ref{intgspecfloeq}). In Section \ref{seceqindextheory}, we prove our ``index equals spectral flow" results (Theorem \ref{propfredindeqsfg1} and \ref{propindtheorem}). Finally, in Section \ref{seclocind} we prove how our equivariant spectral flow is related to the notion of delocalised eta invariants and higher rho invariants (Theorems \ref{thm eta} and \ref{theoremmain}).

\subsection*{Acknowledgements}

We thank Koen van den Dungen, Hao Guo, Bram Mesland and Adam Rennie for helpful discussions in person and via email.

\section{Preliminaries} \label{secpreliminaries}

 In this section we collect the main definitions and constructions concerning the Hilbert $\CS(G)$-module counterpart of the $L^2$-space of compactly supported sections on a $G$-manifold $M$, due to Kasparov \cite{kasparov}. We follow the work of \cite{haoguo}, where a generalisation of this Hilbert module was made in order to encompass Sobolev-type Hilbert modules. 
 
 We also recall the definitions of secondary invariants that we will prove to be related by spectral flow: delocalised $\eta$-invariants and $\rho$-invariants.

Throughout this paper, we consider  a Riemannian manifold $M$ and a unimodular, locally compact group $G$ acting properly and isometrically on $M$. Let $d\mu$ be the Riemannian density on $M$, and fix a Haar measure $dg$ for $G$. Let $E \to M$ be a $G$-equivariant Hermitian vector bundle, where $G$ acts via isometries between the fibers. We also fix a $G$-equivariant, symmetric, elliptic, first-order differential operator $D\colon \cs\to \cs$.

\subsection{$G$-Sobolev Modules}\label{subsecgsobolevmod}

 We denote the $L^2$-inner product of two compactly supported smooth sections $s_1,s_2\in\cs$ by
 \begin{equation}\label{l2prod}
	\langle s_1,s_2\rangle_{L^2(E)}:=\int_M (s_1(m),s_2(m))_{E_m}\,d\mu(m).
\end{equation}
 The vector space completion of $\cs$ with respect to the norm defined by (\ref{l2prod}) is denoted by $L^2(E)$. The completion of $\cs$ with respect to the graph norm defined by $D$ defines a first Sobolev space $W^1(E)$. Similarly, for each $k\in \mathbb{Z}_+$ the graph norm of $D^k$ defines a $k$-th Sobolev space, which we denote by $W^k(E)$.

 We use the symbol $\CS(G)$ in order to denote either one of the following two group $C^*$-algebras, which are completions of the space of compactly supported function $C_c(G)$: the \textit{maximal group $C^*$-algebra}, denoted by $\CS_{\rm{max}}(G)$, and the \textit{reduced group $C^*$-algebra}, denoted by $\CS_r(G)$. Their norm are related by the inequalities
\begin{equation}\label{l1ineq}
\|f\|_{\CS_r(G)}\leq\|f\|_{\CS_{\text{max}}(G)}\leq \|f\|_{L^1(G)},
\end{equation} for every $f\in C_c(G)$. We write $\|f\|_{\CS(G)}$ to denote either the maximal or reduced norm, if no confusion arises.

The following construction, due to Kasparov \cite{kasparov}, is a Hilbert $\CS(G)$-module analogue of the Hilbert space $L^2(E)$. The group $G$ acts naturally on compactly supported smooth sections $s\in \cs$ by 
\begin{equation*}
	g(s)(m):=g(s(g^{-1}m)),
\end{equation*}
for each $g\in G$ and $m\in M$. Let $s_1,s_2\in \cs$ and $f\in C_c(G)$ a compactly supported function, and define
\begin{equation}\label{eq e0 inner prod}
\begin{split}
\langle s_1,s_2\rangle_{\ee^0(E)}&:=\left(g \mapsto \langle s_1,g(s_2)\rangle_{L^2(E)}\right)\in C_c(G),\\
	(s_1\cdot f)&:=\left(m\mapsto \int_G g(s_1)(m)f(g^{-1})\,dg\right)\in \cs.	
\end{split}
\end{equation}
Note that the inner product $\langle s_1,s_2\rangle_{\ee^0(E)}$ is indeed a compactly supported function of $G$, by properness of the action. Define the norm
\begin{equation}\label{eq e0 norm}
	\|s\|_{\ee^0(E)}:=\|\langle s,s\rangle_{\ee^0(E)}\|_{\CS(G)}^{1/2}.
\end{equation}
The vector space completion of $\cs$ with respect to the norm (\ref{eq e0 norm}) is denoted by $\ee^0(E)$. By taking completions, one produces a $\CS(G)$-valued inner product and $\CS(G)$-action on the right, which turns $\ee^0(E)$ into a Hilbert $\CS(G)$-module. This construction is generalised in \cite{haoguo}: define for each $k\in\mathbb{Z}_+$ and $s_1,s_2\in \cs$ the $C_c(G)$-valued inner product
\begin{equation*}
		\langle s_1,s_2\rangle_{\ee^k(E)}:=\displaystyle\sum_{l=0}^k \langle D^ls_1,D^ls_2\rangle_{\ee^0(E)},
\end{equation*}
which is a Sobolev-type generalisation of the $\ee^0$-inner product in (\ref{eq e0 inner prod}). Then, define the norm
\begin{equation}\label{eqeknorm}
	\|s\|_{\ee^k(E)}:=\|\langle s,s\rangle_{\ee^k(E)}\|_{\CS(G)}^{1/2}.
\end{equation}
The vector space completion of $\cs$ with respect to the norm defined in (\ref{eqeknorm}) is called the \textit{$k$-th $G$-Sobolev module} defined with respect to $D$, and is denoted by $\ee^k(E)$. Positivity of these inner products is proved in  \cite[Lemma 3.2]{haoguo}.

The operator $D^k$ defines a map $D^k\colon \cs\to \ee^0(E)$, for each $k\in\mathbb{Z}_+$. Since $D$ is $G$-equivariant, we see that $D^k$ is symmetric with respect to the $\ee^0$-inner product in (\ref{eq e0 inner prod}), and thus the map $D^k\colon \cs\to \ee^0(E)$ is  closable. We denote the closure operator by $\overline{D^k}\colon \Dom(\overline{D^k})\to \ee^0(E)$ and note that the graph norm of $\overline{D^k}$ on $\Dom(\overline{D^k})$ is equivalent to the norm (\ref{eqeknorm}), and thus $\Dom(\overline{D^k})=\ee^k(E)$. Hence $\overline{D^k}\colon \ee^k(E)\to \ee^0(E)
$ is a bounded operator for each $k\in\mathbb{Z}_+$. We omit the overline notation for the closure operator in what follows, so that given a $G$-equivariant, symmetric, elliptic, first-order differential operator $D$ on a Hermitian $G$-equivariant vector bundle $E\to M$, we may talk about the bounded operator
 \begin{equation}\label{eq dk}
 	D^k\colon\ee^k(E)\to \ee^0(E).
 \end{equation}
 
 Recall the following important property of unbounded operators between Hilbert $C^*$-modules (see \cite[\S 9]{lance} for further details):
 \begin{definition}\label{def regular}
	A \textit{regular} operator between Hilbert $B$-modules $\ee, \mathcal{F}$ is a densely defined closed $B$-linear map $T\colon \Dom(T)\subset \ee\to \mathcal{F}$ such that $T^*\colon \Dom(T^*)\to \mathcal{F}$ is densely defined and $1+T^*T$ has dense range.
\end{definition}
Adjointability of (\ref{eq dk}) is proved in \cite[Proposition 3.8]{haoguo}. We remark that if $D$ has finite propagation speed, then the operator $D$ on $\ee^0(E)$, of particular interest for us, is regular self-adjoint on the domain $\ee^1(E)$ by \cite[Proposition 5.5]{haoguo}. (See \cite[Theorem 5.8]{kasparov} for a proof in the cocompact case, without the finite propagation speed assumption.)

\subsection{The Fredholm Index}\label{subsecfredind}

	In this subsection, we lay out the notion of \textit{Fredholm operators} on a Hilbert $C^*$-module, and recall how to define their index. More details on $K$-theory and (unbounded) $K\!K$-theory and its role in index theory can be found in the standard reference \cite{blackadar}. See also \cite[\S2]{koen} for a concise summary of the necessary concepts. 
	
Let $B$ be a $\sigma$-unital $C^*$-algebra and let $\ee$ be a countably generated Hilbert $B$-module. An adjointable operator $T\in \mathcal{L}(\ee)$ is called \textit{Fredholm} if there exists $Q\in \mathcal{L}(\ee)$ such that $TQ-1, QT-1\in \mathcal{K}(\ee)$, that is, if $T$ is invertible modulo compact operators in $\mathcal{L}(\ee)$. We now recall how to define the index of such operators as an element in the $K$-theory group $K_0(B)$, following \cite{MR1045144}. 

Let $\mathcal{H}_B$ denote the standard $B$-Hilbert module (see e.g.: \cite[Example 15.1.7]{MR1222415}), and let $\mathcal{F}(\ee)$ denote the space of adjointable Fredholm operators on $\ee$. By Kasparov stabilisation, we have $\ee\oplus \mathcal{H}_B\simeq \mathcal{H}_B$, which yields an embedding $\mathcal{F}(\ee)\hookrightarrow \mathcal{F}(\mathcal{H}_B)$ via $T\mapsto T\oplus 1$. Let $\pi\colon\mathcal{L}(\mathcal{H}_B)\to \mathcal{L}(\mathcal{H}_B)/\mathcal{K}(\mathcal{H}_B)$ denote the projection to the Calkin algebra. Then, for each $T\in \mathcal{F}(\ee)$ we get an invertible element $\pi(T\oplus 1)\in\mathcal{L}(\mathcal{H}_B)/\mathcal{K}(\mathcal{H}_B)$, which defines an odd $K$-theory class $[\pi(T\oplus 1)]\in K_1(\mathcal{L}(\mathcal{H}_B)/\mathcal{K}(\mathcal{H}_B))$. The short exact sequence
\begin{equation*}
	0\longrightarrow \mathcal{K}(\mathcal{H}_B)\longrightarrow \mathcal{L}(\mathcal{H}_B)\longrightarrow \mathcal{L}(\mathcal{H}_B)/\mathcal{K}(\mathcal{H}_B)\longrightarrow 0
\end{equation*}
yields a 6-term exact sequence in $K$-theory. Since $K_0(\mathcal{L}(\mathcal{H}_B))=K_1(\mathcal{L}(\mathcal{H}_B))=0$, the boundary map
\begin{equation*}
	\partial\colon K_1(\mathcal{L}(\mathcal{H}_B)/\mathcal{K}(\mathcal{H}_B))\to K_0(\mathcal{K}(\mathcal{H}_B))\simeq K_0(B)
\end{equation*}
defines an isomorphism between the $K$-theory groups. The \textit{Fredholm index} of $T \in \mathcal{F}(\ee)$ is then defined by
\begin{equation}\label{eqindexdef}
	\Index(T):=\partial[\pi(T\oplus 1)]\in K_0(B).
\end{equation}

Let now $T\colon\Dom(T)\subset \ee\to \ee$ be a regular operator on the Hilbert $B$-module $\ee$. A \textit{right parametrix} for $T$ is an operator $Q_R\in\mathcal{L}(\ee)$ such that the operator $TQ_R$ is adjointable and $TQ_R-1$ is compact. A \textit{left parametrix} for $T$ is an operator $Q_L\in\mathcal{L}(\ee)$ such that $Q_LT$ is closable, $\overline{Q_LT}$ is adjointable and $\overline{Q_LT}-1$ is compact. A \textit{parametrix} for $T$ is an operator $Q\in\mathcal{L}(\ee)$ which is both a left and right parametrix. The regular operator $T$ is called \emph{Fredholm} if it has a left and a right parametrix. 
 Equivalently, $T$ is Fredholm if its bounded transform $T(1+T^*T)^{-1/2}\in \mathcal{L}(\ee)$ is Fredholm (\cite[Lemma 2.2]{joachim}). The index of a regular Fredholm operator $T\colon\Dom(T)\to\ee$ is then defined to be the index (\ref{eqindexdef}) of its bounded transform. 
 
  \begin{definition}[\rm{\cite[Definition 2.3]{wahl}}]
  	Let $\ee$ be a Hilbert $B$-module. A \textit{normalising function} for a regular self-adjoint operator $T\colon\Dom(T)\to\ee$ is an odd, non-decreasing, smooth function $\chi\colon\re\to\re$ such that $\chi(0)=0$, $\chi'(0)>0$, $\lim_{x\to\infty}\chi(x)=1$ and $\chi(T)^2-1\in\mathcal{K}(\ee)$.
  \end{definition}  
 If $T$ is regular self-adjoint and Fredholm, then it admits a normalising function $\chi\in C^{\infty}(\re)$ (\cite[Proposition 2.14]{koen}). It follows that the regular self-adjoint operator $\chi(T)$ defines a class
  \begin{equation}\label{eq kktheory class}
 	[T]:=[(\ee,\chi(T),1_\com)]\in K\!K^{j}(\com,B),
 \end{equation}
where $j=0$ if $\ee$ is a $\mathbb{Z}_2$-graded Hilbert module and $T$ is an odd operator, and $j=1$ if these structures are ungraded. The class (\ref{eq kktheory class}) is independent of the choice of normalising function, by \cite[Proposition 2.14]{koen}. If $T$ has compact resolvents, then the function $\chi(x)=x(x^2+1)^{-1/2}$ is a normalising function for $T$. The class (\ref{eq kktheory class}) then corresponds to the bounded transform of the unbounded $(\com,B)$-cycle $(\ee,T,1_\com)$. 

Let 
 \begin{equation*}
 	T=\begin{pmatrix}
 		0 &T_-\\T_+&0
 	\end{pmatrix}
 \end{equation*}
 be an odd Fredholm operator on the standard $\mathbb{Z}_2$-graded Hilbert module $\mathcal{H}_B\oplus \mathcal{H}_B$. Throughout this paper, we fix the identification $$KK^0(\com,B)\simeq K_0(B)$$ mapping the class $[T]\in KK^0(\com,B)$ to $\Index(T_+)\in K_0(B)$ (cf. \cite[\S2.1]{wahl}).

 \subsection{Delocalised $\eta$-Invariants and $\rho$-Invariants}
 
 Classical spectral flow relates $\eta$-invariants of different Dirac operators to each other (\cite{apsiii}). We will show that equivariant spectral flow yields  analogous relations between``higher'' analogues of $\eta$-invariants: delocalised $\eta$-invariants and $\rho$-invariants. (See Theorems \ref{thm eta} and  \ref{theoremmain}).

 In the setting of proper, cocompact group actions, the $\eta$-invariant \cite{APSI} was generalised to the \emph{delocalised $\eta$-invariant} \cite{HWW23, Lott99}. Recall that every proper $G$-manifold $M$ admits a smooth cutoff function $\co\colon M\to[0,1]$ such that, for every $m\in M$,
\begin{equation}\label{eq cutoff c}
	\int_G \co(g^{-1}m)^2\,dg =1.
\end{equation}
We do suppose here that $M/G$ is compact. Then the cutoff function $\co \in C^{\infty}_c(M)$ can be chosen to be compactly supported. Let $\kappa_t$ be the Schwartz kernel of the operator $De^{-tD^2}$. Fix an element $h \in G$ whose centraliser $Z$ is unimodular. Then there is a $G$-invariant measure $d(xZ)$ on $G/Z$.
\begin{definition}
If the following converges, then the \emph{delocalised $\eta$-invariant} of $D$ at $h$ is
\begeq{eq def eta g}
\eta_h(D) := \frac{2}{\sqrt{\pi}} \int_0^{\infty} \int_{G/Z} \int_M
\co(m)^2 \tr\bigl(xhx^{-1} \kappa_{t^2}(xh^{-1}x^{-1}m,m) \bigr)\, dm\, d(xZ)\, dt.
\eneq
\end{definition} 
  
 For the definition of $\rho$-invariants, we  
 suppose, in addition to the assumptions at the start of this section, that $G = \Gamma$ is a finitely generated discrete group, and that it acts freely on $M$. Let $C_0(M)$ denote the $C^*$-algebra of continuous functions on $M$ which vanish at infinity.
We view elements in $C_0(M)$ as operators on $L^2(E) \otimes l^2(\bN)$ by pointwise multiplication on the first factor. 
\begin{definition}\label{def D star}
An operator $T  \in \B(L^2(E) \otimes l^2(\bN))$
\begin{itemize}
\item has \emph{finite propagation} if there is an $R>0$ such that for all $f_1, f_2 \in C_0(M)$ whose supports are at least a distance $R$ apart, we have $f_1 \circ T  \circ f_2 = 0$;
\item is \emph{pseudolocal} if $[T,f]$ is a compact operator for all $f \in C_0(M)$;
\item is \emph{locally compact} if $T \circ f$ and $f \circ T$ are compact operators for all $f \in C_0(M)$.
\end{itemize}
\end{definition}

\begin{definition}[\rm{\cite[p.\ 42]{Roe96}; \cite[Def.\ 5.2.1]{WillettYu}; \cite[\S 1.1]{piazzaschick}}]\label{def c star algs}
The algebra $D^*(M)^{\Gamma}$ is the closure in $\mathcal{B}(L^2(E) \otimes l^2(\mathbb{N}))$ of the subalgebra of $\Gamma$-equivariant, pseudolocal operators with finite propagation. The \emph{equivariant Roe algebra} $C^*(M)^{\Gamma} \subset D^*(M)^{\Gamma}$ is the closure in $\mathcal{B}(L^2(E) \otimes l^2(\mathbb{N}))$ of the subalgebra of $\Gamma$-equivariant, locally compact operators with finite propagation. 
\end{definition}

Now suppose that $M$ is odd-dimensional, and that it has a $\Gamma$-equivariant spin structure, that $E$ is the corresponding spinor bundle (ungraded), and that the Riemannian metric $g^{TM}$ on $M$ has positive scalar curvature. Then, the spin Dirac operator $\slashed{D}$ is invertible by Lichnerowicz' formula.
Let $\delta_1 \in l^2(\mathbb{N})$ be the function 

\begin{equation*}
	\begin{cases}
		\delta_1(1)=1,\\
		\delta_1(n)=0,\;\;n\neq 1.
	\end{cases}
\end{equation*} 
Let 
$p_1\colon l^2(\mathbb{N}) \to l^2(\mathbb{N})$ be the projection given by $p_1(\varphi) = \varphi(1) \delta_1$ for all $\varphi \in l^2(\mathbb{N})$. Let $\chi\colon \re \to [0,1]$ be a normalising function that takes values $\pm 1$ on the spectrum of $\slashed{D}$.  By Lemma 2.1 in \cite{Roe16}, the operator  $\chi(\slashed{D}) \otimes 1_{l^2(\mathbb{N})}$ lies in $D^*(M)^{\Gamma}$. Its square is the identity, so the operator $(\chi(\slashed{D}) \otimes p_1 + 1)/2$ is a projection in $D^*(M)^{\Gamma}$. It is the projection onto the positive spectrum of $\slashed{D}$, so in particular independent of $\chi$.
Hence
it defines a class
\begin{equation}\label{eq Kthry rho}
[(\chi(\slashed{D}) \otimes p_1 + 1)/2] \in K_0(D^*(M)^{\Gamma}).
\end{equation}
\begin{definition}\label{def rho invar}
The \emph{$\rho$-invariant} of the positive scalar curvature metric $g^{TM}$ is the class \eqref{eq Kthry rho}. It is denoted by $\rho(g^{TM})$.
\end{definition}

\begin{remark}
There is in fact a relation between $\rho$-invariants and delocalised $\eta$-invariants, see Theorem 4.3 in \cite{xieyu}. That result applies to a version of the $\rho$-invariant defined in terms of Yu's localisation algebras, but see Section 6 of \cite{xieyu2} for the relation with Definition \ref{def rho invar}.
\end{remark}

 \section{Results}\label{sec results}
 We now lay out the main results of this paper. In Subsection \ref{sec SFG}, we put forward our construction of the equivariant spectral flow for families of Dirac-type operators. In Subsection \ref{subsec gsf as ref}, we prove that equivariant flow refines classical spectral flow under an integration map. We then prove ``index equals spectral flow" theorems in Subection \ref{sec SF index}. Finally, in Subsection \ref{sec rho eta invar}, we make a connection between our equivariant spectral flow to delocalised $\eta$-invariants and $\rho$-invariants.
 \subsection{$G$-Spectral Flow}\label{sec SFG}
 
 Let $M$, $E$ and $G$ be as at the start of Section \ref{secpreliminaries}, and assume that $G$ acts cocompactly on $M$.
 From now on, 
let $X$ be a locally compact, paracompact topological space. We consider a family $\{D_x\colon \cs\to L^2(E)\}_{x\in X}$ of $G$-equivariant, symmetric, elliptic, first-order differential operators on $E \to M$. For each $x\in X$, let $W^1_x(E)$ be the closure of $\cs$ under the graph norm of the operator $$D_x\colon \cs \to L^2(E).$$
 
 We make the following set of assumptions:

\begin{enumerate}[(1)]
	\item\label{assump1}
    	The domains $W^1_x(E)$ are independent of $x\in X$ (here we write $W^1(E):= W^1_{x_0}(E)$ for a fixed $x_0\in X$);
    	\item\label{assump2} The map $D\colon X\to \mathcal{B}(W^1(E),L^2(E))\colon x\mapsto D_x$ is norm-continuous;  	    	\item\label{assump3}
	There exists a compact subset $K\subset X$ such that
	\begin{equation}\label{weitzenbock}
	D_x^2=L_x^*L_x +\kappa_x,\end{equation} for every $x\in X\backslash K$, where $L_x$ is a $G$-equivariant differential operator, and $\kappa_x\in\End(E)^G$ is a $G$-equivariant endomorphism such that $\kappa_x\geq c>0$ fiberwise.\end{enumerate}
\begin{remark}\label{remark domains}
	In Assumption \ref{assump1}, by \textit{independent of $x\in X$} we mean that the Banach space structures of the domains $W^1_x(E)$ are equivalent. In other words, the operators $D_x\colon \cs \to L^2(E)$ have equivalent graph norms, although not necessarily \textit{uniformly} equivalent. (See Definition \ref{def unif graph norm} later on.) We note that if, additionally, $G=\Gamma$ is a discrete group acting freely on $M$, then the assumption that $M/\Gamma$ is compact already entails that the domains $W^1_x(E)$ do not depend on $x\in X$. (See Proposition \ref{prop garding g discrete} later on.)
\end{remark}

\begin{remark}
	In Assumption \ref{assump3}, the differential operator $L_x$ is defined as an operator $ \cs\to\Gamma_c^{\infty}(E')$, where $E'\to M$ is another Hermitian $G$-equivariant vector bundle. This assumption  is reminiscent of Lichnerowicz' formula for generalised Dirac operators. In that case, the operator $L_x$ is given by a $G$-equivariant connection $\nabla_x\colon \cs\to \Gamma_c^{\infty}(E\otimes T^*M)$.
\end{remark}

Let $\ee^1_x(E)$ denote the first $G$-Sobolev module defined with respect to the operator $D_x$, for each $x\in X$. The following is part of Lemma \ref{a1lemma} below.
\begin{lemma}\label{lem E1x}
The norms on the spaces $\ee_x^1(E)$ are equivalent.
\end{lemma}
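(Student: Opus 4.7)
The plan is to prove a two-sided bound between the $\ee^1_x(E)$ and $\ee^1_y(E)$ norms, with one direction following from the other by swapping the roles of $x$ and $y$. First, I would observe that since $\langle s,s\rangle_{\ee^0(E)}$ and $\langle D_x s, D_x s\rangle_{\ee^0(E)}$ are positive elements of $\CS(G)$, the elementary $C^*$-norm estimates for sums of positive elements yield
\begin{equation*}
	\max\bigl(\|s\|_{\ee^0(E)}^2, \|D_x s\|_{\ee^0(E)}^2\bigr) \;\leq\; \|s\|_{\ee^1_x(E)}^2 \;\leq\; \|s\|_{\ee^0(E)}^2 + \|D_x s\|_{\ee^0(E)}^2,
\end{equation*}
and similarly for $\ee^1_y(E)$. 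Hence it suffices to bound $\|D_y s\|_{\ee^0(E)}$ by a constant multiple of $\bigl(\|s\|_{\ee^0(E)}^2 + \|D_x s\|_{\ee^0(E)}^2\bigr)^{1/2}$ for every $s \in \cs$.

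Next, writing $D_y s = D_x s + (D_y - D_x)s$ and applying the standard Hilbert-module estimate $\langle a+b, a+b\rangle \leq 2(\langle a,a\rangle + \langle b,b\rangle)$ with $a = D_x s$ and $b = (D_y - D_x)s$ reduces the problem to showing that $D_y - D_x$ extends to a bounded operator $\ee^1_x(E) \to \ee^0(E)$. By Assumption \ref{assump1}, the Banach space structures of $W^1_x(E)$ and $W^1_y(E)$ are equivalent, so $D_y$ is bounded as a map $W^1_x(E) \to L^2(E)$, and consequently so is the $G$-equivariant operator $D_y - D_x$.

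The main obstacle is therefore to promote this classical $L^2$-boundedness to a bound between the corresponding Hilbert $\CS(G)$-modules, since an arbitrary operator that is bounded from $W^1_x(E)$ to $L^2(E)$ need not extend to a bounded operator on the associated $G$-Sobolev modules. I expect this to follow by adapting the arguments in \cite{haoguo} used to show that $D^k\colon \ee^k(E)\to \ee^0(E)$ is bounded: using a compactly supported cutoff function $\co$ satisfying (\ref{eq cutoff c}), on sections supported in $\Supp(\co)$ the $\ee^0$-norm is comparable to the $L^2$-norm, and the $\ee^1_x$-norm to the $W^1_x$-norm. The $G$-equivariance of $D_y-D_x$, together with a convolution/averaging procedure based on $\co$ that rewrites $\langle (D_y-D_x)s,(D_y-D_x)s\rangle_{\ee^0(E)}$ in terms of localised $L^2$-pairings, should then allow the local $L^2$-estimate to be upgraded to the required global $\ee^0$-estimate, with the norm of the extension controlled by the norm of $D_y-D_x$ as a map $W^1_x(E)\to L^2(E)$ together with geometric constants depending only on $\Supp(\co)$ and the $G$-action. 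Exchanging the roles of $x$ and $y$ then yields the full equivalence of norms.
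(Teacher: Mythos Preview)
Your proposal is correct and follows essentially the same strategy as the paper. Both arguments hinge on the same key step: promoting the classical boundedness of a local $G$-equivariant operator $W^1_x(E)\to L^2(E)$ to adjointable boundedness $\ee^1_x(E)\to\ee^0(E)$, with operator norm controlled by the $L^2$-norm times a constant depending only on the cutoff $\co$. The paper isolates this as a separate proposition (a slight variant of \cite[Proposition~3.5]{haoguo}) and applies it directly to $D_y+i$, using that $\|s\|_{\ee^1_y(E)}=\|(D_y+i)s\|_{\ee^0(E)}$; you instead decompose $D_y=D_x+(D_y-D_x)$ and apply the same mechanism to $D_y-D_x$. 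The paper's route is marginally cleaner since it avoids the preliminary $C^*$-norm inequalities and the $a+b$ estimate, but the substance is identical.
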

From now on, we fix $x_0 \in X$ and write $\ee^1(E):= \ee^1_{x_0}(E)$. By Lemma \ref{lem E1x}, this does not depend on the choice of $x_0$, up to bounded isomorphisms of Hilbert $C^*(G)$-modules.

Consider the Hilbert $C_0(X,\CS(G))$-module $C_0(X,\ee^0(E))$ and its subspace 
\begeq{eq dom D dot}
\{\eta\in C_0(X,\ee^0(E)):\eta(x)\in \ee^1(E), \;x \mapsto D_x \eta(x) \in C_0(X,\ee^0(E))\}.
\eneq

Define the unbounded operator $\DD$ on $C_0(X,\ee^0(E))$, with domain \eqref{eq dom D dot}, by
\begin{equation}\label{eqopdd}
	(\DD\eta)(x):=D_x(\eta(x)),
\end{equation}
 for all $\eta$ in \eqref{eq dom D dot}.
\begin{proposition}\label{prop D dot Fred}
The operator $\DD$ is regular self-adjoint and Fredholm.
\end{proposition}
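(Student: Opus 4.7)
The plan is to construct the resolvent $(\DD\pm i)^{-1}$ pointwise in $x$ for regularity and self-adjointness, and then verify the Fredholm property by combining the positivity estimate of Assumption \ref{assump3} outside $K$ with the fact that on the Kasparov Hilbert module $\ee^0(E)$, each $D_x$ has compact resolvent thanks to cocompactness. For each fixed $x$, cocompactness together with the ellipticity and $G$-equivariance of $D_x$ provides a regular self-adjoint operator $D_x$ on $\ee^0(E)$ with domain $\ee_x^1(E)=\ee^1(E)$ (up to equivalent norms, by Lemma \ref{lem E1x} and \cite[Theorem 5.8]{kasparov}). Hence the fiberwise resolvents $R_\pm(x):=(D_x\pm i)^{-1}\in\mathcal{L}(\ee^0(E))$ exist with $\|R_\pm(x)\|\le 1$. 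The resolvent identity
\begin{equation*}
R_\pm(x)-R_\pm(y)=R_\pm(x)(D_y-D_x)R_\pm(y),
\end{equation*}
combined with Assumption \ref{assump2} transferred to the $\ee$-modules via Lemma \ref{a1lemma} (so that $x\mapsto D_x$ is norm-continuous from $X$ into $\mathcal{L}(\ee^1(E),\ee^0(E))$), shows that $x\mapsto R_\pm(x)$ is norm-continuous in $\mathcal{L}(\ee^0(E))$. Therefore $(R_\pm\eta)(x):=R_\pm(x)\eta(x)$ defines an adjointable operator on $C_0(X,\ee^0(E))$ that is easily checked to be a two-sided inverse of $\DD\pm i$ on the domain \eqref{eq dom D dot}. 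The standard resolvent criterion \cite[Chapter 9]{lance} then yields that $\DD$ is regular self-adjoint.

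For the Fredholm property, I would pick a normalising function $\chi\in C^\infty(\re)$ with $\chi(\lambda)=\mathrm{sign}(\lambda)$ for $|\lambda|\ge\sqrt{c}$, where $c$ is the constant from Assumption \ref{assump3}. For $x\notin K$, Assumption \ref{assump3} gives $D_x^2=L_x^*L_x+\kappa_x\ge c$ on $\ee^0(E)$, so the spectrum of $D_x$ is disjoint from $(-\sqrt{c},\sqrt{c})$ and hence $\chi(D_x)^2=1$. For every $x$, cocompactness gives $(D_x\pm i)^{-1}\in\mathcal{K}(\ee^0(E))$, and since $\chi^2-1\in C_0(\re)$ it follows that $\chi(D_x)^2-1\in\mathcal{K}(\ee^0(E))$. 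Representing $\chi$ by a Helffer--Sj\"ostrand type integral in the resolvents $R_\pm(x)$, the established continuity of $x\mapsto R_\pm(x)$ upgrades to continuity of $x\mapsto\chi(D_x)^2-1$ as a map into $\mathcal{K}(\ee^0(E))$, with support contained in $K$. Such a section defines an element of $C_0(X,\mathcal{K}(\ee^0(E)))\cong\mathcal{K}(C_0(X,\ee^0(E)))$. Since $\chi(\DD)$ acts fiberwise as $\chi(D_x)$, this shows that $\chi(\DD)^2-1$ is compact, and therefore $\DD$ is Fredholm.

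The main technical obstacle is rigorously justifying the fiberwise identity $(\chi(\DD)\eta)(x)=\chi(D_x)\eta(x)$ together with the norm-continuity of $x\mapsto\chi(D_x)^2-1$ into $\mathcal{K}(\ee^0(E))$. Both are best handled through an integral representation of $\chi$ in the resolvents, whose continuity in $x$ has already been secured; the compactness then follows once one identifies $\mathcal{K}(C_0(X,\ee^0(E)))$ with the continuous-field algebra $C_0(X,\mathcal{K}(\ee^0(E)))$. A secondary point, which I expect to be absorbed into Lemma \ref{a1lemma}, is that Assumption \ref{assump2} indeed transfers from the Hilbert space level $\mathcal{B}(W^1(E),L^2(E))$ to the Hilbert module level $\mathcal{L}(\ee^1(E),\ee^0(E))$.
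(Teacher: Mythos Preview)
Your argument is correct. The regular self-adjointness part is essentially the same as the paper's: the paper packages the pointwise resolvent construction and its continuity into a citation of \cite[Lemma 3.2]{koen}, after verifying the hypotheses (a1)--(a3) there via Lemmas \ref{a1lemma}, \ref{a2lemma}, \ref{a3lemma}, which are precisely the facts you use. (The continuity transfer you attribute to Lemma \ref{a1lemma} is actually the content of Lemma \ref{a2lemma}, but this is cosmetic.)

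For the Fredholm property, however, the paper takes a more direct route. Rather than choosing a normalising function $\chi$ with $\chi^2-1$ supported in $(-\sqrt c,\sqrt c)$ and arguing that $\chi(\DD)^2-1$ is compact, the paper writes down an explicit parametrix
\[
Q:=(\DD-i)^{-1}f+\DD^{-1}(1-f),
\]
where $f\in C_c(X)$ satisfies $f|_K=1$. The point is that $\DD$ has \emph{locally compact} resolvents (so $(\DD-i)^{-1}f$ is compact) and is genuinely invertible off $K$ by Lemma \ref{a3lemma} (so $\DD^{-1}(1-f)$ makes sense and $\DD Q-1$, $Q\DD-1$ are compact). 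This avoids entirely the two technical obstacles you flag: there is no need to justify the fiberwise identity $(\chi(\DD)\eta)(x)=\chi(D_x)\eta(x)$ or the norm-continuity of $x\mapsto\chi(D_x)^2-1$ into $\mathcal{K}(\ee^0(E))$, since only resolvents appear. Your approach still works and is conceptually natural (it directly produces the $K\!K$-cycle representative), but the parametrix argument is shorter and sidesteps the functional-calculus bookkeeping.
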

This proposition is proved after Proposition \ref{koen32}.

By Proposition \ref{prop D dot Fred}, the operator $\DD$ defines a class
	\begin{equation}\label{eqclass}
		[\DD]\in K\!K^1(\com,C_0(X,\CS(G))),
	\end{equation}
	by means of (\ref{eq kktheory class}).	
\begin{definition}[$G$-Spectral Flow]\label{defgspecflow}
	Let $\{D_x\colon \cs\to L^2(E)\}_{x\in X}$ be a family of $G$-equivariant, symmetric, elliptic, first-order differential operators on a manifold $M$ equipped with a proper, isometric and cocompact action by a unimodular, locally compact group $G$, and suppose that Assumptions \ref{assump1}-\ref{assump3} are satisfied. We denote by 
	\begin{equation*}
		\text{SF}_G(\DD)\in K\!K^1(\com,C_0(X,\CS(G)))
	\end{equation*}
	the class given by (\ref{eqclass}), and call it the \textit{$K\!K$-theoretic $G$-spectral flow} of the family $\DF$. In the special case $X=\re$, we write
	\begin{equation}\label{gspectralf}
		\Sf_G(\DD)\in K_0(\CS(G)),
	\end{equation}
	to denote the image of $\SF_G(\DD)$ under the Bott periodicity isomorphism $K\!K^1(\com,C_0(\re,\CS(G)))\simeq K_0(\CS(G))$. We call the class (\ref{gspectralf}) the \textit{G-spectral flow} of the family $\{D_x\}_{x\in \re}$.
\end{definition}

\begin{remark}
	We note here that if $X$ is a \emph{compact} topological space, then Assumption \ref{assump3} holds vacuously (take $K=X$), and hence we can extend our definition of ($K\!K$-theoretic) $G$-spectral flow to families of more general $G$-equivariant, symmetric, elliptic, first-order differential operators. 
\end{remark}
	We obtain the following natural property of $G$-spectral flow:
\begin{proposition}\label{prop k equals 0}
		 Suppose that the endomorphism $\kappa_x$ in \eqref{weitzenbock} satisfies $\kappa_x \geq c>0$ for all $x \in X$, i.e.\ that we can take $K = \emptyset$ in Assumption (3). 
Then $\SF_G(\DD)=0$.
	\end{proposition}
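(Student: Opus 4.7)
The plan is to use the uniform positivity hypothesis to give $\DD$ a spectral gap at zero, from which a normalising function can be chosen with $\chi(\DD)^2=\mathrm{id}$ identically, rendering the Kasparov cycle that defines $\SF_G(\DD)$ degenerate.

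First, for each $x \in X$ and $s \in \cs$, the symmetry of $D_x$ and $L_x$ with respect to the $\ee^0(E)$-inner product, combined with the Weitzenböck identity $D_x^2 = L_x^*L_x + \kappa_x$, gives
\begin{equation*}
\langle D_x s, D_x s \rangle_{\ee^0(E)} = \langle L_x s, L_x s \rangle_{\ee^0(E)} + \langle s, \kappa_x s \rangle_{\ee^0(E)}.
\end{equation*}
The first term is positive in $\CS(G)$. Since $\kappa_x - c \cdot \mathrm{id}_E$ is a non-negative $G$-equivariant endomorphism, it admits a $G$-equivariant pointwise square root $T_x := (\kappa_x - c)^{1/2} \in \End(E)^G$, and therefore
\begin{equation*}
\langle s,(\kappa_x-c)s\rangle_{\ee^0(E)} = \langle T_x s, T_x s\rangle_{\ee^0(E)} \geq 0 \text{ in } \CS(G).
\end{equation*}
By density of $\cs$ in $\ee^1(E)$ and continuity of $D_x \colon \ee^1(E) \to \ee^0(E)$, this extends to the bound $\langle D_x \xi, D_x \xi\rangle_{\ee^0(E)} \geq c\langle \xi,\xi\rangle_{\ee^0(E)}$ for all $\xi \in \ee^1(E)$.

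Second, since the constant $c$ is independent of $x$, this pointwise estimate transfers to $\langle \DD\eta,\DD\eta\rangle \geq c\langle\eta,\eta\rangle$ in $C_0(X,\CS(G))$ for all $\eta$ in the domain of $\DD$. Regular self-adjointness of $\DD$ (Proposition \ref{prop D dot Fred}) together with the continuous functional calculus for regular self-adjoint operators on Hilbert $C^*$-modules then places the spectrum of $\DD$ inside $\{t \in \re : |t| \geq \sqrt{c}\}$.

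Third, I would choose a smooth, odd, non-decreasing normalising function $\chi \colon \re \to \re$ with $\chi(0)=0$, $\chi'(0)>0$, $\lim_{t\to\pm\infty}\chi(t)=\pm 1$, and $\chi(t) = \mathrm{sgn}(t)$ for $|t| \geq \sqrt{c}$. The spectral gap ensures $\chi(\DD)^2 = \mathrm{id}$ identically on $C_0(X,\ee^0(E))$, so the triple $(C_0(X, \ee^0(E)), \chi(\DD), 1_\com)$ is a \emph{degenerate} $K\!K^1$-cycle (self-adjoint, squaring to the identity, and trivially commuting with the scalar action of $\com$). Hence
\begin{equation*}
\SF_G(\DD) = [\DD] = 0 \in K\!K^1(\com, C_0(X, \CS(G))).
\end{equation*}
The main subtlety is passing from the fiberwise inequality $\kappa_x \geq c$ to the $\CS(G)$-valued inner product bound, which is resolved by extracting a $G$-equivariant square root of $\kappa_x - c$; everything else is standard functional calculus and degeneracy in $K\!K$-theory.
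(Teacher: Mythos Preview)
Your proof is correct and follows essentially the same route as the paper: both use the $G$-equivariant square root of $\kappa_x-c$ to obtain $\langle(\kappa_x-c)s,s\rangle_{\ee^0}\geq 0$ (this is the paper's Lemma~\ref{positivitynoncocomp}), combine it with positivity of $L_x^*L_x$ to get $D_x^2\geq c$ uniformly on $\ee^0(E)$ (the argument of Lemma~\ref{a3lemma}), deduce a spectral gap for $\DD$, and conclude that the defining cycle is degenerate. The only cosmetic slip is that $\langle L_xs,L_xs\rangle$ lives in $\ee^0(E')$ rather than $\ee^0(E)$, and what you need is $G$-equivariance of $L_x$ rather than symmetry; otherwise your argument simply unpacks what the paper packages into lemma citations and the reference to \cite[Proposition~1.21]{ebert}.
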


We prove Proposition \ref{prop k equals 0} at the end of Subsection \ref{sec pf SFG}.

\begin{remark}\label{remark unb sf}
	Let $\psi\in C_0(X)$ be strictly positive, and equal to $1$ on the compact $K$ of Assumption \ref{assump3}. In Proposition \ref{a123} we show that $(C_0(X,\ee^0(E)),\psi^{-1}\DD,1_{\com})$ is an odd unbounded Kasparov $(\com,C_0(X,\CS(G)))$-cycle. The bounded transform class
	\begin{equation}\label{eqkkclass}
		[\psi^{-1}\DD]:=[(C_0(X,\ee^0(E)),\psi^{-1}\DD((\psi^{-1}\DD)^2+1)^{-1/2},1_{\com})]
	\end{equation}
	in $K\!K^1(\com,C_0(X,\CS(G)))$ is then well-defined. Following \cite[Proposition 3.7]{koen}, one can see that (\ref{eqclass}) and (\ref{eqkkclass}) actually coincide, and hence (\ref{eqkkclass}) is independent on the choice of function $\psi\in C_0(X)$. It follows that $K\!K$-theoretic spectral flow can also be realised as
	\begin{equation*}
		\SF_G(\DD)=[\psi^{-1}\DD]\in K\!K^1(\com,C_0(X,\CS(G))).
	\end{equation*}
\end{remark}
\begin{remark}\label{remark wahl}
	Under the condition that the family $\{D_x\colon \ee^1(E)\to \ee^0(E)\}_{x\in X}$ has \textit{locally trivialiasing families} (see \cite[Definition 3.9]{wahl}), we note that the non-commutative spectral flow developed in \cite{wahl} can be employed in order to define an equivariant version of spectral flow in our context. We point out that, by \cite[Proposition 2.21]{koen}, this notion coincides with the one we propose in Definition \ref{defgspecflow}. Moreover, even though the non-commutative spectral flow in \cite{wahl} depends on the choice of so-called spectral sections, since the operators in our family are invertible outside the compact set $K\subset X$, one can take these spectral sections to be projection onto the positive spectrum, yielding a canonical definition of the spectral flow in this setting.
\end{remark}

\begin{example}[Classical spectral flow]\label{excpt}
	Let $M$ be a compact Riemannian manifold and $E\to M$ a Dirac bundle. Let $\{D_x\colon \text{Dom}(D_x)\to L^2(E)\}_{x\in X}$ be a norm-continuous family of (closures of) \textit{Dirac-type} operators on $L^2(E)$. That means that there exists, for each $x\in X$, a Clifford action $c_x\colon T^*M\to \End(E)$ and Clifford connection $\nabla_x\colon \Gamma^{\infty}(E)\to \Gamma^{\infty}(E\otimes T^*M)$ such that $D_x:=c_x\circ \nabla_x$. By standard elliptic analysis on compact manifolds, we know that $D_x$ is self-adjoint on the first Sobolev space $\text{Dom}(D_x)=W^1(E)$, for each $x\in X$. Lichnerowicz' formula reads $D_x^2=\nabla_x^*\nabla_x+\kappa_x$ for each $x\in X$, where $\Delta_x:=\nabla_x^*\nabla_x$ is a Laplacian-type operator built from the connection $\nabla_x$, and $\kappa_x\in \text{End}(E)$ is an endomorphism built out of the curvature. Suppose there exists a compact subset $K\subset X$ and $c>0$ such that $\kappa_x\geq c>0$ fiberwise, for every $x\in X\backslash K$. Since $\Delta_x\geq 0$ on $L^2(E),$ for every $x\in X$, it follows that $D_x^2\geq c >0$ on $L^2(E)$, for every $x\in X\backslash K$. It then follows that Assumptions \ref{assump1}-\ref{assump3} are satisfied (with $G=\{e\}$ the trivial group, so then $\ee^0(E)=L^2(E)$), so that the $K\!K$-theoretic $\{e\}$-spectral flow class 
	\begin{equation*}
	\SF_{\{e\}}(\DD)\in K\!K^1(\com,C_0(X))
	\end{equation*}
	 is well-defined. In the case $X=\re$, one recovers classical spectral flow as an integer (cf. \cite[Proposition 2.21]{koen}):
	\begin{equation*}
		\Sf_{\{e\}}(\DD)\in K\!K^1(\com,C_0(\re))\simeq K\!K^0(\com,\com)\simeq \mathbb{Z}.
	\end{equation*}
	In particular, the isomorphism $K\!K^1(\com,C_0(\re))\simeq K\!K^0(\com,\com)$ is implemented via Kasparov product on the right by the $K$-homology class $[-i\partial_t]\in K\!K^1(C_0(\re),\com)$, so that 
	\begin{equation*}
		\SF_{\{e\}}(\DD)\otimes_{C_0(\re)}[-i\partial_t]=\Sf_{\{e\}}(\DD).
	\end{equation*}
		\end{example}
	\begin{example}
	Let $M$ be a compact spin manifold and let $\{g_x\}_{x\in X}$ be a smooth family of Riemannian metrics on $M$. Then the corresponding family of spin Dirac operators $\{D_{x}\}_{x\in X}$ is norm continuous (cf. \cite[Proposition 4.22]{van_den_Dungen_2016} and Remark \ref{diffimpliescont}). Suppose that there is a compact subset $K\subset X$ such that for all $x\in X\backslash K$, the scalar curvature $\kappa_x$ associated to $g_x$ is strictly positive ($\kappa_x\geq c>0$). Example \ref{excpt} shows then that there exists a well-defined $K\!K$-theoretic $\{e\}$-spectral flow class in $K\!K^1(\com,C_0(X))$. We use this construction later on to relate $G$-spectral flow to higher rho and eta invariants (Theorem \ref{theoremmain}). 
\end{example}

\begin{example}\label{univcover}
	Let $\widehat{M}$ be a compact Riemannian manifold, $\widehat{E}\to \widehat{M}$ a Hermitian vector bundle and $\{\widehat{D}_x\}_{x\in X}$ a family of symmetric, elliptic, first-order differential operators on $\widehat{E}$. We suppose that Assumptions \ref{assump2} and \ref{assump3} are satisfied by the family $\{\widehat{D}_x\}_{x\in X}$. It thus defines a $K\!K$-theoretic $\{e\}$-spectral flow class
	\begin{equation}\label{eqclassspecflo}
		\text{SF}_{\{e\}}(\widehat{D}_\bullet)\in K\!K^1(\com,C_0(X)).
	\end{equation}
	Let $M$ denote the universal cover of $\widehat{M}$ and $\Gamma:=\pi_1(\widehat{M})$ its fundamental group. Lift $\widehat{E}\to \widehat{M}$ to a Hermitian vector bundle $E\to M$, and the family of operators on $\widehat{M}$ to a family $\{D_x\colon \cs \to L^2(E)\}_{x\in X}$ of $\Gamma$-invariant, symmetric, elliptic, first-order differential operators on $M$. We prove that Assumptions \ref{assump1}-\ref{assump3} are also satisfied by the family $\{D_x\}_{x\in X}$ in Lemma \ref{lemma lift assump univ cov}. Since by construction $M/\Gamma=\widehat{M}$ is compact, the class
		\begin{equation}\label{eqequivspflo}
		\text{SF}_{\Gamma}(\DD)\in K\!K^1(\com,C_0(X,\CS(\Gamma))).
	\end{equation}
	is well-defined. We prove in Subsection \ref{subsecintgspecflow} below that the equivariant spectral flow class (\ref{eqequivspflo}) refines classical spectral flow (\ref{eqclassspecflo}) under a summation map (Corollary \ref{corollary summation map}).

\end{example}

\subsection{$G$-Spectral Flow as a Refinement}\label{subsec gsf as ref}

Let $\co\colon M\to [0,1]$ be a cutoff function, as in (\ref{eq cutoff c}). Consider the space $\gi$ of $G$-invariant smooth sections on $E$. Since $M/G$ is compact, we may choose $\co$ to be compactly supported. We can then make sense of the inner product
\begin{equation}\label{eq l2 t inner prod}
	\langle \co s_1,\co s_2\rangle_{L^2(E)},
\end{equation}
for each $s_1,s_2\in \gi$. The vector space completion of $\co\gi$ with respect to the norm defined by means of (\ref{eq l2 t inner prod}) is denoted by $L^2_T(E)^G$. For each $x\in X$, let  $\widetilde{D}_x\colon \co\gi\to L^2_{T}(E)^G$ denote the map given by
\begin{equation*}
    \widetilde{D}_x(\co s)=\co D_xs,
\end{equation*}
for each $s\in \gi$. We note here that the operator $\widetilde{D}_x$ is symmetric (see Remark \ref{remark d tilde is symmetric}). Denote the completion of $\co\gi$ with respect to the graph norm defined by $\widetilde{D}_x$ by $W^1_{T,x}(E)^G$. Since $G$ is unimodular, the spaces $W^1_{T,x}(E)^G$ and $L^2_T(E)^G$ do not depend on the choice of cutoff function (\cite[Lemma 4.4]{petermathai}).  

Let $\psi \in C_0(X)$ be strictly positive, and constant $1$ on the compact set $K$ of Assumption (3) in Subsection \ref{sec SFG}. Consider the $C_0(X)$-module $C_0(X,L^2_T(E)^G)$ and its subspace
\begeq{eq dom D tilde dot}
\{\eta\in C_0(X,L^2_T(E)^G):\eta(x)\in W^1_{T,x}(E)^G, \;x \mapsto \widetilde{D}_x \eta(x) \in \psi C_0(X,L^2_T(E)^G)\}.
\eneq
Define the unbounded operator $\psi^{-1}\widetilde{D}_{\bullet}$ on $C_0(X,\ee^0(E))$ by
\[
	(\psi^{-1}\widetilde{D}_\bullet\eta)(x):=\psi^{-1}(x)\cdot D_x(\eta(x)),
\]
 for all $\eta$ in \eqref{eq dom D tilde dot}.

Let $\phi\colon C_c(G)\to \com$ denote the integration map on compactly supported functions on $G$. It extends to a $*$-homomorphism
\begin{equation*}
	\phi\colon \CS_{\text{max}}(G)\to\com,
\end{equation*}
which in turn induces a group homomorphism 
\begin{equation}\label{phiind}
\phi_*\colon K_0(\CS_{\text{max}}(G))\to K_0(\com).
\end{equation}
In $K\!K$-theoretic language, the $*$-homomorphism $\phi\colon \CS_{\text{max}}(G)\to \com$ defines a class 
\begin{equation*}
	[\phi_*]:=[(\com,0,\phi)]\in K\!K^0(\CS_{\text{max}}(G),\com),
\end{equation*}
represented by the unbounded Kasparov $(\CS_{\rm{max}}(G),\com)$-cycle $(\CS_{\rm{max}}(G),0,\phi)$. The homomorphism (\ref{phiind}) is given by the Kasparov product on the right with $[\phi_*]$, via the identifications $K_0(\CS_{\text{max}}(G))\simeq K\!K^0(\com,\CS_{\text{max}}(G))$ and $K_0(\com)\simeq K\!K^0(\com,\com)$. 

\begin{proposition}\label{intgspecflo}
The triple $(C_0(X,L^2_T(E)^G), \psi^{-1} \widetilde{D}_\bullet, 1_{\com})$ is an unbounded Kasparov $(\com, C_0(X))$-cycle which represents the class $\SF_G(D_\bullet)\otimes_{\CS_{\rm{max}}(G)}[\phi_*]\in K\!K^1(\com,C_0(X))$.
\end{proposition}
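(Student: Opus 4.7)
The plan is to realise the proposed triple as the internal unbounded Kasparov product of $(C_0(X,\ee^0(E)),\psi^{-1}\DD,1_\com)$, which represents $\SF_G(\DD)$ by Remark \ref{remark unb sf}, with the cycle $(C_0(X),0,\phi_X)$ representing $1_{C_0(X)}\otimes [\phi_*]$, where $\phi_X := \operatorname{id}_{C_0(X)} \otimes \phi \colon C_0(X,\CS_{\rm max}(G)) \to C_0(X)$. The crucial geometric input is an isomorphism of Hilbert $C_0(X)$-modules
\begin{equation*}
\Phi\colon C_0(X,\ee^0(E))\otimes_{\phi_X}C_0(X)\xrightarrow{\ \sim\ }C_0(X,L^2_T(E)^G),
\end{equation*}
induced fibrewise by $s\otimes 1 \mapsto \co \int_G g(s)\,dg =: \co P(s)$ for $s\in\cs$. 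A direct calculation using $G$-invariance of $P(s_2)$, an isometric change of variables $n=g^{-1}m$ on $M$, and the cutoff identity $\int_G \co(gn)^2\, dg = 1$ yields
\begin{equation*}
\langle \co P(s_1),\co P(s_2)\rangle_{L^2(E)} = \int_G \langle s_1, g(s_2)\rangle_{L^2(E)}\, dg = \phi\bigl(\langle s_1,s_2\rangle_{\ee^0(E)}\bigr),
\end{equation*}
so $\Phi$ is isometric on the algebraic tensor product; density of $\co \gi$ in $L^2_T(E)^G$ extends it to the claimed isomorphism.

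Next I would verify that $\Phi$ intertwines $\psi^{-1}\DD \otimes 1$ with $\psi^{-1}\widetilde{D}_\bullet$. Because each $D_x$ is $G$-equivariant, it is right $\CS_{\rm max}(G)$-linear on $\ee^0(E)$, so $\DD\otimes 1$ is well defined on the internal tensor product. On a simple tensor $s \otimes 1$ at parameter $x$,
\begin{equation*}
(D_x \otimes 1)(s\otimes 1) = D_x s \otimes 1 \overset{\Phi}{\longmapsto} \co P(D_x s) = \co D_x P(s) = \widetilde{D}_x\bigl(\co P(s)\bigr),
\end{equation*}
using $G$-equivariance of $D_x$ and the defining relation $\widetilde{D}_x(\co u) = \co D_x u$ for $u\in \gi$. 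This identifies the operators on a core; closure transports the identification to the full domain and commutes with multiplication by $\psi^{-1}\in C_b(X)$.

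Finally, since the right-hand cycle $(C_0(X),0,\phi_X)$ carries the zero operator, Kucerovsky's criterion for the unbounded internal Kasparov product holds tautologically (the connection and positivity conditions degenerate), and the product cycle is represented by $(C_0(X,\ee^0(E))\otimes_{\phi_X}C_0(X),\psi^{-1}\DD\otimes 1,1_\com)$, which via $\Phi$ is precisely the triple in the statement. The main obstacle will be verifying that this triple is genuinely a Kasparov cycle, i.e.\ that $\psi^{-1}\widetilde{D}_\bullet$ is regular self-adjoint with $C_0(X)$-compact resolvent on $C_0(X,L^2_T(E)^G)$. Regularity and self-adjointness transport across $\Phi$ from the corresponding properties of $\psi^{-1}\DD$ established in (the analogue of) Proposition \ref{a123}. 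Compactness of the resolvent requires, for each fixed $x$, standard elliptic analysis on the cocompact quotient $M/G$ applied to $\widetilde{D}_x$ on $L^2_T(E)^G$, combined with vanishing of $\psi$ at infinity and the fiberwise positivity of $\kappa_x$ outside $K$ in Assumption (\ref{assump3}), to force norm decay of $(1+(\psi^{-1}\widetilde{D}_x)^2)^{-1}$ as $x\to\infty$ in $X$. These arguments closely parallel those behind Proposition \ref{prop D dot Fred}.
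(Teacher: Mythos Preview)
Your proposal is correct and follows essentially the same route as the paper: represent $\SF_G(\DD)$ by the unbounded cycle $(C_0(X,\ee^0(E)),\psi^{-1}\DD,1_\com)$, take the internal product with $(C_0(X),0,\phi_X)$, and then exhibit the unitary $\Phi$ (the paper's $\widetilde{\mathcal{F}}$, built from $s\mapsto \co\int_G g(s)\,dg$) that carries this product cycle to $(C_0(X,L^2_T(E)^G),\psi^{-1}\widetilde{D}_\bullet,1_\com)$; the inner-product identity and the intertwining relation are exactly Lemmas \ref{f0 unitary} and \ref{lemma intertwine e1}. One small simplification: your final paragraph proposes to re-establish regularity, self-adjointness and $C_0(X)$-compact resolvent of $\psi^{-1}\widetilde{D}_\bullet$ by redoing elliptic analysis on $M/G$, but this is unnecessary---once $\Phi$ is a unitary intertwining the operators, all of these properties transport directly from those of $\psi^{-1}\DD$ already proved in Proposition \ref{a123}, which is how the paper concludes.
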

This proposition is proved in Subsection \ref{subsecintgspecflow}.

\begin{remark}
	Proposition \ref{intgspecflo}  is reminiscent of  \cite[Theorem 1.1]{MV03}, \cite[Theorem A.1]{HL08},
 \cite[Proposition D.3]{Mathai_2010} and \cite[Theorem 3.9]{Guo_2021}.
\end{remark}

Recall the construction in Example \ref{univcover}, where the family $\{\widehat{D}_x\colon \Gamma_c^{\infty}(\widehat{E})\to L^2(\widehat{E})\}_{x\in X}$ of symmetric, elliptic, first-order differential operators on a compact manifold $\widehat{M}$ is lifted to a family $\{D_x\colon \cs\to L^2(E)\}_{x\in X}$ of $\Gamma$-invariant, symmetric, elliptic, first-order differential operators on the universal cover $M$, where $\Gamma:=\pi_1(\widehat{M})$ is given by the fundamental group of $\widehat{M}$. 
\begin{corollary}\label{corollary summation map}
	The family $\DF$ satisfies Assumptions \ref{assump1}-\ref{assump3} and 
	\begin{equation*}
		\SF_\Gamma(\DD)\otimes_{\CS_{\rm{max}}(\Gamma)}[\phi_*]=\SF_{\{e\}}(\widehat{D}_\bullet)\in K\!K^1(\com,C_0(X)).
	\end{equation*}
	In particular, if $X=\re$, then $\Gamma$-spectral flow refines classical spectral flow under the summation map $\phi\colon C_c(\Gamma)\to \com$: 
	\begin{equation*}
		\phi_*(\Sf_{\Gamma}(\DD))=\Sf_{\{e\}}(\widehat{D}_\bullet)\in \mathbb{Z}.
	\end{equation*}
	\end{corollary}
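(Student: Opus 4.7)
The first assertion---that the lifted family $\{D_x\}_{x\in X}$ satisfies Assumptions \ref{assump1}--\ref{assump3}---is exactly the content of Lemma \ref{lemma lift assump univ cov} referenced in Example \ref{univcover}. Assumptions \ref{assump2} and \ref{assump3} lift directly from $\widehat{M}$ to $M$ because the operators $L_x$ and $\kappa_x$ in the Lichnerowicz-type decomposition are canonically $\Gamma$-equivariant on the cover, while the norm continuity of $x \mapsto D_x$ is inherited locally. Assumption \ref{assump1} follows from the compactness of $\widehat{M} = M/\Gamma$ via Proposition \ref{prop garding g discrete} mentioned in Remark \ref{remark domains}. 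Hence $\SF_\Gamma(\DD)$ is well defined.

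For the main identity, the strategy is to evaluate the left-hand side using Proposition \ref{intgspecflo}, which asserts that $\SF_\Gamma(\DD) \otimes_{\CS_{\rm{max}}(\Gamma)} [\phi_*]$ is represented by the unbounded Kasparov cycle $(C_0(X, L^2_T(E)^\Gamma),\, \psi^{-1}\widetilde{D}_\bullet,\, 1_\com)$, and then to identify this cycle with one representing $\SF_{\{e\}}(\widehat{D}_\bullet)$. Since $\Gamma$ acts freely and properly on $M$, every $s \in \gi$ descends to a smooth section $\hat{s} \in \Gamma^\infty(\widehat{E})$. Define $U\colon \co\gi \to \Gamma^\infty(\widehat{E})$ by $U(\co s) := \hat{s}$. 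Using the cutoff identity \eqref{eq cutoff c} together with $\Gamma$-invariance of $|s|^2$ and integration over a fundamental domain $F$ for $\Gamma$, one computes
\[
\|\co s\|^2_{L^2(E)} = \int_M \co(m)^2 |s(m)|^2\, d\mu(m) = \int_F |s(m)|^2 \sum_{g \in \Gamma} \co(gm)^2\, d\mu(m) = \|\hat{s}\|^2_{L^2(\widehat{E})},
\]
so $U$ extends to a $C_0(X)$-linear isometric isomorphism $L^2_T(E)^\Gamma \to L^2(\widehat{E})$. Because each $D_x$ is $\Gamma$-equivariant and descends to $\widehat{D}_x$, the intertwining identity $U\widetilde{D}_x = \widehat{D}_x U$ holds on the common core $\co\gi$; standard elliptic regularity on the compact quotient $\widehat{M}$ then shows that $U$ identifies the full graph domain of $\widetilde{D}_x$ with $W^1(\widehat{E})$. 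Conjugating by $U$ therefore carries the cycle produced by Proposition \ref{intgspecflo} onto $(C_0(X, L^2(\widehat{E})),\, \psi^{-1}\widehat{D}_\bullet,\, 1_\com)$, which represents $\SF_{\{e\}}(\widehat{D}_\bullet)$ by Definition \ref{defgspecflow} combined with Remark \ref{remark unb sf}. This yields the required equality in $K\!K^1(\com, C_0(X))$.

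The special case $X = \re$ follows by naturality of Bott periodicity: the isomorphism $K\!K^1(\com, C_0(\re, A)) \simeq K_0(A)$ is natural in $A$ and compatible with Kasparov products along $*$-homomorphisms. Applied to $\phi\colon \CS_{\rm{max}}(\Gamma) \to \com$, it converts tensoring with $[\phi_*]$ into the induced map $\phi_*\colon K_0(\CS_{\rm{max}}(\Gamma)) \to K_0(\com) = \mathbb{Z}$, giving $\phi_*(\Sf_\Gamma(\DD)) = \Sf_{\{e\}}(\widehat{D}_\bullet)$. The main obstacle I anticipate is not conceptual but bookkeeping: verifying carefully that $U$ matches the unbounded operators at the level of domains (not just on $\co\gi$) and that the identification is $C_0(X)$-linear uniformly in $x$, so that the $KK$-cycles are genuinely isomorphic rather than merely pointwise equivalent. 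Given cocompactness of $\Gamma \curvearrowright M$, this reduces to the standard correspondence between $\Gamma$-invariant Sobolev modules on $M$ and Sobolev spaces on $\widehat{M}$.
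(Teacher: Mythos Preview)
Your proposal is correct and follows essentially the same approach as the paper: invoke Lemma \ref{lemma lift assump univ cov} for Assumptions \ref{assump1}--\ref{assump3}, apply Proposition \ref{intgspecflo} to represent $\SF_\Gamma(\DD)\otimes_{\CS_{\rm{max}}(\Gamma)}[\phi_*]$ by the cycle $(C_0(X,L^2_T(E)^\Gamma),\psi^{-1}\widetilde{D}_\bullet,1_\com)$, and then identify this with the cycle for $\SF_{\{e\}}(\widehat{D}_\bullet)$ via a unitary $L^2_T(E)^\Gamma\simeq L^2(\widehat{E})$ intertwining $\widetilde{D}_x$ with $\widehat{D}_x$. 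The paper simply cites \cite[\S4.3]{petermathai} for the existence of this unitary and declares the rest ``straightforward'' (with a pointer to Remark \ref{remark unb sf}), whereas you construct $U$ explicitly using the cutoff function and a fundamental domain, and you spell out the naturality-of-Bott argument for $X=\re$; both additions are correct and in the spirit of the cited reference.
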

Corollary \ref{corollary summation map} is proved at the end of Subsection \ref{subsecintgspecflow}.

\subsection{$G$-Spectral Flow as an Index}\label{sec SF index}

In this subsection we assume that $X$ is a connected, complete Riemannian manifold. We consider a symmetric, elliptic, first-order differential operator $P_X$ on a Hermitian vector bundle $F\to X$ with finite propagation speed, and recall that $P_X$ defines a $K$-homology class $[P_X]\in K\!K^1(C_0(X),\com)$. 		

Consider the tensor product 
\begin{equation*}
\ee^0(E)\otimes L^2(X,F),
\end{equation*} 
and note that the operator $\DD\otimes 1$ is well-defined on the domain $\text{Dom}(\DD)\otimes_{C_0(X)}L^2(X,F)$ in $$C_0(X,\ee^0(E))\otimes_{C_0(X)} L^2(X,F) \simeq \ee^0(E)\otimes L^2(X,F).$$ By  \cite[Lemma 9.10]{lance}, $\DD\otimes 1$ is regular self-adjoint on $\ee^0(E)\otimes L^2(X,F)$. We denote the operator $\DD\otimes 1$  simply by $\DD$ from now on. The operator $1\otimes P_X$ may be directly defined on the domain $\ee^0(E)\otimes \text{Dom}(P_X)$.  We denote such operator simply by $P_X$, which by \cite[Theorem 5.4]{kl1} is regular self-adjoint on $\ee^0(E)\otimes L^2(X,F)$. The (generalised) \textit{Dirac--Schrödinger operator} is then defined as
\begin{equation}\label{eq def PD}
    P_{D}:=P_X-i\DD,
\end{equation}
on the domain $\text{Dom}(P_X)\cap \text{Dom}(\DD)\subset \ee^0(E)\otimes L^2(X,F)$. 
\begin{remark}\label{remark 2}
	Under the isomorphism
	\begin{equation}\label{eq identification hilb mod}
\ee^0(E)\otimes L^2(X,F)\simeq C_0(X,\ee^0(E))\otimes_{C_0(X)} L^2(X,F),
\end{equation}
the operator $1\otimes P_X$ can be alternatively defined as follows: one may extend the exterior derivative on the space $C_0^1(X)$ of differentiable functions vanishing at infinity with derivatives vanishing at infinity to an operator
\begin{equation*}
	d\colon C_0^1(X,\ee^0(E))\simeq C_0^1(X)\otimes \ee^0(E) \xrightarrow{d\otimes 1} \Gamma_0(T^*X)\otimes \ee^0(E)\simeq \Gamma_0(T^*X\otimes \ee^0(E)),
\end{equation*}
where $\Gamma_0(T^*X)$ denotes the space of continuous sections on the cotangent bundle $T^*X$ which vanish at infinity. Let $\sigma_{P_X}:T^*X\to \End(F)$ denote the principal symbol of $P_X$. Following \cite[\S3.2]{koen}, we see that the operator $1\otimes_d P_X$ on the initial domain $C_c^1(X,\ee^0(E))\otimes_{C^1_0(X)}\Dom(P_X)$, given by
\begin{equation}\label{eq px}
	(1\otimes_d P_X)(\eta\otimes \xi):= \eta\otimes P_X\xi+ (\sigma_{P_X}\otimes 1)(d\eta)(\xi),
\end{equation}
agrees with $1\otimes P_X$ on $\ee^0(E)\otimes L^2(X,F)$ under the identification (\ref{eq identification hilb mod}). The operator $1\otimes_d P_X$ is used in \cite{koen} for some explicit calculations, but for us it shall be enough to consider the operator $1\otimes P_X$ on $\ee^0(E)\otimes \Dom(P_X)$.
\end{remark}

In what follows, we study the index theory of the Dirac--Schrödinger operator \eqref{eq def PD}  and its relation to $G$-spectral flow through two different approaches. In Subsection \ref{subseccontfam} we follow \cite{koen} and consider the family $\DF$ satisfying Assumptions \ref{assump1}-\ref{assump3} (the continuous family case), with the extra assumption that the family is assumed to be \textit{locally constant outside of the compact set} $K\subset X$ (see Assumption \ref{assump4}). The second case is treated in Subsection \ref{subsecdiffam}, where we follow the methods in \cite{kl1,koen} and assume that the family is suitably differentiable. The former approach will be enough for us to go forward with the study of the (equivariant) index theory of the Dirac--Schrödinger operator in Section \ref{seclocind}, and follows directly from the results in \cite{koen}, whereas the latter is slightly more general, but requires a bit more work to carry out.

In order to prove an ``index equals spectral flow" theorem in \cite[\S5]{koen}, an extra assumption is considered, namely (A4'). We consider the following stronger assumption:
\begin{enumerate}[(4)]
	\item\label{assump4} The family $\DF$ is \textit{locally constant outside of} $K\subset X$:  there exists a disjoint finite open cover $\{U_j\}$ of $X\backslash K$ such that $D_x=D_y$, whenever $x,y\in U_j$ are in the same open set.
\end{enumerate}

The following result is now an immediate consequence of \cite[Theorem 5.15]{koen}: 
\begin{theorem}\label{propfredindeqsfg1}
	Suppose the family $\DF$ satisfies Assumptions \ref{assump1}-\ref{assump4}. Then the
operator $P_X-i\DD$ is regular and Fredholm, and 
\begin{equation*}
	\SF_G(\DD)\otimes_{C_0(X)}[P_X]=\Index(P_X-i\DD)\in K_0(\CS(G)).
\end{equation*}
In particular, if $X=\re$ then
\begin{equation*}
	\Sf_G(\DD)=\Index(-i\partial_t-i\DD) \in K_0(\CS(G)).
\end{equation*}
\end{theorem}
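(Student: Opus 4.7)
The plan is to identify the statement as a direct instance of the unbounded Kasparov product theorem \cite[Theorem 5.15]{koen}. I would first represent both factors of the Kasparov product by unbounded cycles. By Remark \ref{remark unb sf}, the class $\SF_G(\DD)$ is represented by the unbounded Kasparov $(\com, C_0(X, \CS(G)))$-cycle $(C_0(X, \ee^0(E)), \psi^{-1}\DD, 1_\com)$, for any strictly positive $\psi \in C_0(X)$ that equals $1$ on the compact set $K$ of Assumption \ref{assump3}. The class $[P_X] \in KK^1(C_0(X), \com)$ is represented by the standard unbounded cycle $(L^2(X, F), P_X, 1)$, with $C_0(X)$ acting by pointwise multiplication and $P_X$ having finite propagation speed by hypothesis.

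With these cycles in hand, I would verify the hypotheses of \cite[Theorem 5.15]{koen}: Assumptions \ref{assump1} and \ref{assump2} furnish the continuity of $x \mapsto D_x$; Assumption \ref{assump3} provides the fiberwise positive lower bound $D_x^2 \geq c$ outside $K$, which after rescaling by $\psi^{-1}$ yields the coercivity needed for Fredholmness; and Assumption \ref{assump4} is strictly stronger than koen's condition (A4'), as noted in the paragraph preceding Theorem \ref{propfredindeqsfg1}. Under these conditions, \cite[Theorem 5.15]{koen} asserts that $P_X - i\DD$ is regular self-adjoint and Fredholm on $\Dom(P_X) \cap \Dom(\DD) \subset \ee^0(E) \otimes L^2(X, F)$, and that its bounded transform represents the internal Kasparov product $\SF_G(\DD) \otimes_{C_0(X)} [P_X]$ in $KK^0(\com, \CS(G))$. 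Since the identification $KK^0(\com, \CS(G)) \simeq K_0(\CS(G))$ fixed in Subsection \ref{subsecfredind} sends this class to $\Index(P_X - i\DD)$, the first equality follows. In the special case $X = \re$, the Bott isomorphism $KK^1(\com, C_0(\re, \CS(G))) \simeq K_0(\CS(G))$ is implemented by Kasparov product with $[-i\partial_t] \in KK^1(C_0(\re), \com)$, as recorded in Example \ref{excpt}, whence $\Sf_G(\DD) = \Index(-i\partial_t - i\DD)$.

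The main obstacle is bookkeeping rather than any substantive difficulty: one has to check that the unbounded representative of $\SF_G(\DD)$ supplied by Remark \ref{remark unb sf} fits the precise continuity, domain, and local-constancy conditions imposed in \cite[\S5]{koen}, and that $\Dom(P_X) \cap \Dom(\DD)$ is a core on which the sum $P_X - i\DD$ inherits the Kucerovsky-type connection and positivity conditions. Both points reduce to structural facts already available: Proposition \ref{prop D dot Fred} gives the regular self-adjointness and Fredholm property of $\DD$, and the regularity of $1 \otimes P_X$ on the tensor product module is covered by \cite[Theorem 5.4]{kl1}. Once these ingredients are collected, koen's abstract product argument applies verbatim, and the theorem follows with no additional analytic input.
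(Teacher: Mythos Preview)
Your proposal is correct and matches the paper's own approach exactly: the paper's proof simply observes that Assumption \ref{assump4} (locally constant outside $K$) implies koen's condition (A4'), so that \cite[Theorem 5.15]{koen} applies directly. Your additional remarks about the unbounded representatives, the domain $\Dom(P_X)\cap\Dom(\DD)$, and the $X=\re$ specialisation are accurate elaborations of what the paper leaves implicit.
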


We also describe $G$-spectral flow as an index without the assumption that  $\DF$ is locally constant outside a compact set. 
We follow \cite{kl1,koen} and consider differentiable families:
\begin{definition}
 Let $V$ be a normed vector space and $X$ a Riemannian manifold. We say that a map $f\colon X\to V$ is \textit{differentiable} if, for every $x\in X$, there exists a linear map $df_x\colon T_xX\to V$ such that
\begin{equation*}
	\lim_{h\to 0} \frac{\left\|f(x)-f(x+h)-df_x(h)\right \|_{V}}{\|h\|_{T_xX}}=0,
\end{equation*}
for every $h \in T_xX$, where $x+h:=\exp_x(h)$ is given by the exponential map around $x\in X$.
\end{definition}
\begin{definition}\label{defweakdif}
	Let $X$ be a smooth manifold and let $\ee_1, \ee_2$ be countably generated Hilbert $B$-modules. A map $S\colon X\to \mathcal{L}(\ee_1,\ee_2)\colon x\mapsto S_x$ is said to have \textit{uniformly bounded weak derivative} if, for each $e_1\in \ee_1$ and $e_2\in \ee_2$
	\begin{enumerate}[1.]
		\item The map $f\colon X\to B\colon x\mapsto \langle S_x e_1,e_2\rangle_{\ee_2}$ is differentiable;
		\item For every $x\in X$, the weak derivative $dS(x)\colon\ee_1\to\ee_2\otimes T_x^*X$, defined for each $h\in T_x X$ by the relation 
	\begin{equation*}
	\langle dS(x)(h)e_1,e_2\rangle_{\ee_2}=df_x(h),\end{equation*}
is a bounded operator; 
		\item The supremum $\sup_{y\in X}\|dS(y)\|_{\mathcal{L}(\ee_1,\ee_2\otimes T_x^*X)}$ is finite.
	\end{enumerate}
\end{definition}
\begin{remark}\label{diffimpliescont}
	If $D\colon X\to \mathcal{B}(W^1(E),L^2(E))$ has uniformly bounded weak derivative, then Assumption \ref{assump2} in Subsection \ref{sec SFG} is automatically satisfied. Indeed, following \cite[Remark 8.4.2]{kl1}, take $\eta_1\in W^1(E)$, $\eta_2\in L^2(E)$ and denote by $\gamma(x,y)$ the geodesic curve between two points $x,y\in X$ in the same geodesic coordinate chart. It follows that
	\begin{equation}\label{contfromdiff}
		\begin{split}
			|\langle (D_x-D_y)\eta_1,\eta_2\rangle_{L^2(E)}|&=\left |\int_{\gamma(x,y)}\langle dD(s) \eta_1,\eta_2\rangle_{L^2(E)}\,ds\right|\\
			&\leq \displaystyle\sup_{y\in X} \|dD(y)\|\cdot \|\eta_1\|_{W^1(E)}\cdot\|\eta_2\|_{L^2(E)}\cdot\text{dist}(x,y),
		\end{split}
	\end{equation}
	so that  $\|D_x-D_y\|_{\mathcal{B}(W^1(E),L^2(E))}\leq C\cdot \text{dist}(x,y)$.
\end{remark}

We consider the modified family $\{D_x':=\psi^{-1}(x)D_x\}_{x\in X}$, where $\psi\in C_0(X)$ is a strictly positive function with $\psi(x)=1$ for all $x\in K$. Analogously to  \eqref{eq def PD}, 
 we construct the Dirac--Schrödinger operator $P_X - i \DD'$ with domain $(\Dom(P_X)\cap \Dom(\DD'))\subset \ee^0(E)\otimes L^2(X,F)$. 
 To ensure that the domains of the relevant operators are well-behaved, we work with the assumption that the family $\{D_x\}_{x\in X}$ has uniformly equivalent graph norms.

 \begin{definition}\label{def unif graph norm}
Let $\ee$ be a Hilbert module over a $C^*$-algebra $B$. Let $A$ be a set and, for each $a \in A$, let $D_a$ be an unbounded operator on $\ee$ with domain $\mathcal{D}$ independent of $a$. 
 Then, the family of operators $(D_a)_{a \in A}$ has \emph{uniformly equivalent graph norms} if there are $a_0 \in A$ and  $C_1, C_2>0$ such that for all $a \in A$ and all $v \in \mathcal{D}$,
 \[
 C_1\|(D_{a_0}+i) v\|_{\ee} \leq  \|(D_{a}+i) v\|_{\ee} \leq  C_2 \|(D_{a_0}+i) v\|_{\ee}. 
 \] 
 \end{definition}
 
\begin{theorem}\label{propindtheorem}
Suppose that the family $\{D_x\colon \cs\to L^2(E)\}_{x\in X}$ has uniformly equivalent graph norms
 and uniformly bounded weak derivative. Then $P_X-i\DD'$ is regular and Fredholm, and
\begin{equation*}
	\SF_G(\DD)\otimes_{C_0(X)}[P_X]= \Index(P_X-i\DD') \in K_0(\CS(G)).
\end{equation*}
In particular, if $X=\re$ then
\begin{equation*}
	\Sf_G(\DD)=\Index(-i\partial_t-i\DD')  \in K_0(\CS(G)).
\end{equation*}
\end{theorem}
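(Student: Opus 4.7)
The plan is to represent both factors of the Kasparov product by unbounded cycles and invoke a Kucerovsky-type criterion, in the spirit of \cite{kl1, koen}. By Remark \ref{remark unb sf}, $\SF_G(\DD)$ is represented by the unbounded Kasparov $(\com, C_0(X, \CS(G)))$-cycle $(C_0(X,\ee^0(E)), \DD', 1_\com)$, and $[P_X]$ by $(L^2(X,F), P_X, 1_{C_0(X)})$. Under the isomorphism \eqref{eq identification hilb mod}, the natural candidate for the product is the triple $(\ee^0(E)\otimes L^2(X,F),\, P_X - i\DD',\, 1_\com)$, whose class in $K\!K^0(\com, \CS(G))\simeq K_0(\CS(G))$ will, after grading, be realised as the Fredholm index of $P_X - i\DD'$.

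The first task is to verify that $P_X - i\DD'$ is regular and self-adjoint on the domain $\Dom(P_X)\cap \Dom(\DD')$. As already noted in the excerpt, $\DD'$ and $P_X$ are each regular self-adjoint on $\ee^0(E)\otimes L^2(X,F)$ by \cite[Lemma 9.10]{lance} and \cite[Theorem 5.4]{kl1}, respectively. The uniformly bounded weak derivative assumption, combined with the alternative description \eqref{eq px}, forces the graded commutator $[P_X, \DD']$ to extend to a bounded operator with norm controlled by $\sup_y \|dD(y)\|$ and the principal symbol of $P_X$. The uniformly equivalent graph norms guarantee that $\Dom(\DD')$ consists of sections $\eta\in C_0(X,\ee^0(E))$ with $\eta(x) \in W^1(E)$ pointwise and $x\mapsto \psi^{-1}(x)D_x\eta(x) \in C_0(X,\ee^0(E))$, so that $\Dom(P_X)\cap \Dom(\DD')$ is a core for both $P_X$ and $\DD'$. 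Regular self-adjointness of the sum $P_X - i\DD'$ then follows by the standard anticommutation-plus-bounded-commutator argument for regular self-adjoint operators on Hilbert modules.

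The next task is Fredholmness. Outside $K$, Assumption \ref{assump3} gives $D_x^2 \geq c>0$, and the condition $\psi \equiv 1$ on $K$ forces $(\DD')^2 = \psi^{-2}\DD^2$ to grow unboundedly as one leaves any compact subset of $X$. Computing $(P_X - i\DD')^*(P_X - i\DD')$, which equals $P_X^2 + (\DD')^2$ modulo a bounded operator coming from $[P_X,\DD']$, the growth of $(\DD')^2$ at infinity in $X$ together with the ellipticity of $P_X$ produces a compact resolvent for $P_X - i\DD'$ on the Hilbert $\CS(G)$-module; Fredholmness then follows from \cite[Lemma 2.2]{joachim}. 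The identification with the Kasparov product class $\SF_G(\DD)\otimes_{C_0(X)}[P_X]$ proceeds by verifying Kucerovsky's criterion for the candidate triple: the connection condition reduces to the boundedness of the graded commutator established above, while the semi-positivity condition follows from the manifest non-negativity of $P_X^2 + (\DD')^2$ modulo a bounded perturbation.

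The main obstacle I expect is the careful domain bookkeeping when verifying Kucerovsky's criterion in the absence of local constancy: without Assumption \ref{assump4} one cannot reduce locally to the product situation handled in \cite[Theorem 5.15]{koen}, so both the uniformly equivalent graph norms and the uniformly bounded weak derivative must be used simultaneously to control the tensor-product domain and the interaction term $[P_X, \DD']$ in a globally defined way. Once the $K\!K$-theoretic equality is established, the special case $X=\re$ follows from Bott periodicity and the identification of $[-i\partial_t]$ as the generator of $K\!K^1(C_0(\re),\com)$, exactly as in Example \ref{excpt}.
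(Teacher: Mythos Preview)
Your overall architecture matches the paper's: represent $\SF_G(\DD)$ by the unbounded cycle $(C_0(X,\ee^0(E)),\DD',1_\com)$, represent $[P_X]$ by $(L^2(X,F),P_X,1_{C_0(X)})$, show that the product operator $\widetilde{P}_{D'}$ on $(\ee^0(E)\otimes L^2(X,F))^{\oplus 2}$ is an unbounded Kasparov cycle, and then verify Kucerovsky's criterion (the paper cites \cite[Proposition 5.10]{koen} and \cite[Theorem 13]{kuce}). So the strategy is right.

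The genuine gap is your assertion that $[P_X,\DD']$ extends to a \emph{bounded} operator. It does not. Using \eqref{eq px} one finds, on the core $\cs\otimes\Gamma_c^\infty(X,F)$,
\[
[P_X,\DD] \;=\; \sigma_{P_X}\cdot d(D(\cdot)),
\qquad
[P_X,\psi^{-1}\DD] \;=\; \sigma_{P_X}(d\psi^{-1})\,\DD \;+\; \psi^{-1}[P_X,\DD].
\]
The operator $d(D(\cdot))$ is bounded only as a map $\ee^1(E)\otimes L^2(X,F)\to \ee^0(E)\otimes L^2(X,T^*X\otimes F)$ (this is Lemma \ref{lemma e0 weak derivatives}), so $[P_X,\DD]$ is merely \emph{relatively} bounded with respect to $\DD$; and the extra term $\sigma_{P_X}(d\psi^{-1})\DD$ is visibly unbounded unless one also controls $d\psi^{-1}$. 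Consequently your ``standard anticommutation-plus-bounded-commutator'' argument for regular self-adjointness does not apply, your identity $(P_X-i\DD')^*(P_X-i\DD')=P_X^2+(\DD')^2$ modulo a bounded operator is false, and the connection condition in Kucerovsky does not reduce to boundedness of the graded commutator.

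What the paper actually does is work with the weaker condition that $[P_X,\DD](\DD\pm i\mu)^{-1}$ is \emph{well-defined and bounded} in the sense of Definition \ref{klass7.1} (this is Lemma \ref{welldefinedbounded}, whose proof requires the preparatory Lemmas \ref{lemma weak der local}--\ref{difl2dife0} transferring weak differentiability from $L^2$ to $\ee^0$). One then chooses $\psi$ with $\|d\psi^{-1}\|_\infty<\infty$ (via \cite[Lemma 8.10]{kl1}) so that $[P_X,\DD'](\DD'\pm i\mu)^{-1}$ is also well-defined and bounded, and invokes \cite[Theorem 7.10]{KL2} for regular self-adjointness of $\widetilde{P}_{D'}$. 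Compact resolvents come from Proposition \ref{a123} together with \cite[Proposition 4.1]{koen}, not from a direct estimate of $P_X^2+(\DD')^2$. Only after this is Kucerovsky's criterion checked. The uniformly equivalent graph norms enter via Lemma \ref{lem UEGN}, which is what makes the core $\cs\otimes\Gamma_c^\infty(X,F)$ and the domain identifications work at the $\ee^0$-module level.
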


\begin{remark}\label{remarkindeqsf}
By \cite[Lemma 5.8]{koen}, in the setting of Theorem \ref{propindtheorem} we see that there exists $\lambda_0\geq 1$ such that, for all $\lambda\geq \lambda_0$, the operator $P_X-i\lambda\DD$ is regular and Fredholm on $\ee^0(E)\otimes L^2(X,F)$. By \cite[Proposition 5.10]{koen} and Theorem \ref{propindtheorem}, it follows that 
\begin{equation}\label{eqfredind}
	\SF_G(\DD)\otimes_{C_0(X)}[P_X]=\Index(P_X-i\lambda\DD) \in K_0(\CS(G)).
\end{equation}
In particular, if $X=\re$ then
\begin{equation*}
	\Sf_G(\DD)  =\Index(-i\partial_t-i\lambda\DD).
\end{equation*}
\end{remark}

Theorem \ref{propindtheorem} is proved at the end of Subsection \ref{subsecdiffam}.

\subsection{$G$-Spectral Flow and Secondary Invariants} \label{sec rho eta invar}

For our final results we show how equivariant spectral flow relates  delocalised $\eta$-invariants and $\rho$-invariants associated to (spin Dirac operators for) different positive scalar curvature metrics to each other.

We consider the following setting. Let $M$ be an odd-dimensional Riemannian manifold and $G$ a unimodular, locally compact group acting properly, isometrically and cocompactly on $M$. We assume here that $M$ admits a $G$-spin structure. Let $\{g_t\}_{t\in\re}$ be a smooth family of $G$-invariant Riemannian metrics on $M$ such that $g_0$ and $g_1$ have positive scalar curvature, and such that 
\begin{equation*}
	\begin{cases}
	g_t=g_0,\;\text{for all}\; t\leq 0,\\
	g_t=g_1,\;\text{for all}\; t\geq 1.
\end{cases}
\end{equation*}
 For each Riemannian manifold $(M,g_t)$, let $S_t$ denote the spinor bundle over $M$ corresponding to $g_t$, let $c_t\colon T^*M\to \End(S_t)$ denote the Clifford action and $\nabla^{S_t}$ the $G$-equivariant spin connection on $S_t$. We define the spin Dirac operators 
\begin{equation}\label{eq spin dirac op mt}
	\slashed{D}_{M_t} := c_t\circ \nabla^{S_t},
\end{equation}
which are $G$-equivariant, symmetric, elliptic, first-order differential operators on $S_t$. By Lichnerowicz' formula, there holds
\begin{equation}\label{eq weitzenbock}
	\slashed{D}_{M_t}=(\nabla^{S_t})^*\nabla^{S_t}+\slashed{\kappa}_t,
\end{equation}
 where $\slashed{\kappa}_t$ is given by a positive multiple of the scalar curvature of $g_t$. 
 
 Consider now the product manifold $M\times \re$ and endow it with the metric $dt^2+g_t$. We set $M_t:= M\times\{t\}$ and, following \cite[\S5]{bgm}, we see that there exists a spinor bundle $\widetilde{S}$ over $M\times \re$ such that
\begin{equation}\label{eqrestriction}
	\widetilde{S}\big{|}_{M_t}=S_t\oplus S_t,
\end{equation}
as Hermitian vector bundles. Let $\nabla^{\widetilde{S}}$ denote the spin connection on $\widetilde{S}$, which preserves the splitting (\ref{eqrestriction}). For each $m\in M$, the parallel transport over the curve $t\mapsto (m,t)$ is a $G$-equivariant isometry 
\begin{equation}\label{eqisometry}
	\tau^0_t\colon (S_0)_m\to (S_t)_m,
\end{equation}
which extends to a $G$-equivariant isometry $\tau_t\colon  S_0\to S_t$ over $M$, for each $t\in \re$. We set 
\begin{equation}\label{eqdiracops}
D_t:=\tau_t^{-1}\circ \slashed{D}_{M_t}\circ \tau_t	,
\end{equation}
so that we get a family 
\begeq{eq Dt Spin}
\{D_t\colon \Gamma_c^{\infty}(M,S_0)\to L^2(M,S_0)\}_{t\in \re} 
\eneq
of $G$-equivariant, symmetric, elliptic, first-order differential operators on the spinor bundle $S_0$ over $M$. Here we suppose, additionally, that the graph norms of the operators $D_t\colon \Gamma_c^{\infty}(M,S_0)\to L^2(M,S_0)$ are equivalent (as they are when $G$ is discrete and acts freely; see Proposition \ref{prop garding g discrete}).

\begin{proposition}\label{prop Spin SFG}
The family \ref{eq Dt Spin} satisfies the conditions of Theorem \ref{propfredindeqsfg1}.
\end{proposition}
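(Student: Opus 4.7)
The plan is to verify Assumptions \ref{assump1}--\ref{assump4} for the family \eqref{eq Dt Spin} with $X=\re$ and compact set $K=[0,1]$. Assumption \ref{assump1} is built into the hypotheses of the proposition, so the remaining task is to establish Assumptions \ref{assump2}--\ref{assump4}, exploiting throughout the fact that by construction the metrics $g_t$, and hence all derived geometric data, are constant on $(-\infty,0]$ and on $[1,\infty)$.

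For Assumption \ref{assump2}, I would argue as in the second example of Subsection \ref{sec SFG}: smoothness of $t\mapsto g_t$ implies smoothness of the Clifford action $c_t$, of the spin connection $\nabla^{S_t}$, of the curvature term $\slashed{\kappa}_t$, and of the parallel transport $\tau_t$. Since $\slashed{D}_{M_t}=c_t\circ\nabla^{S_t}$ is a first-order operator whose symbol and zeroth-order part both depend smoothly on $t$, the conjugated operator $D_t=\tau_t^{-1}\circ\slashed{D}_{M_t}\circ\tau_t$ varies norm-continuously as a map $\re\to\mathcal{B}(W^1(S_0),L^2(S_0))$. Concretely, $D_t-D_s$ is a first-order differential operator on $S_0$ whose coefficients converge to zero uniformly on a compact fundamental domain for the $G$-action as $s\to t$, and $G$-equivariance together with cocompactness then promotes this to the required operator-norm estimate.

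For Assumption \ref{assump3}, I would conjugate the Lichnerowicz formula \eqref{eq weitzenbock} by the fibrewise $G$-equivariant isometry $\tau_t$, obtaining
\[
 D_t^2 \;=\; L_t^*L_t + \kappa_t,\qquad L_t:=\nabla^{S_t}\circ\tau_t,\qquad \kappa_t:=\tau_t^{-1}\circ\slashed{\kappa}_t\circ\tau_t,
\]
where $L_t$ is a $G$-equivariant differential operator from $\Gamma_c^{\infty}(S_0)$ to sections of a companion $G$-equivariant bundle, and $\kappa_t$ is, fibrewise, a positive multiple of the scalar curvature of $g_t$. On $\re\setminus[0,1]$ the metric $g_t$ equals either $g_0$ or $g_1$, both of which are $G$-invariant metrics of strictly positive scalar curvature; $G$-cocompactness then delivers a uniform constant $c>0$ with $\kappa_t\geq c$ fibrewise for all $t\in\re\setminus[0,1]$.

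For Assumption \ref{assump4}, I take the disjoint open cover $\{(-\infty,0),(1,\infty)\}$ of $\re\setminus[0,1]$. Since the metric on $M\times(-\infty,0]$ equals the product metric $dt^2+g_0$, the parallel transport $\tau_t$ along $s\mapsto(m,s)$ is trivial for $t\leq 0$ and $\slashed{D}_{M_t}$ reduces to $\slashed{D}_{M_0}$, so $D_t=D_0$ on $(-\infty,0]$; the same argument on $M\times[1,\infty)$ gives $D_t=D_1$ on $[1,\infty)$. The step that I expect to carry the most genuine technical load is Assumption \ref{assump2}: turning the pointwise smoothness of the coefficients of $D_t-D_s$ into an operator-norm estimate $W^1(S_0)\to L^2(S_0)$ requires care in controlling how the conjugation by $\tau_t$ interacts with first-order differentiation and in invoking $G$-cocompactness to pass from local uniform smallness of the coefficients to a uniform bound on the operator norm.
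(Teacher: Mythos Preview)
Your proposal is correct and follows the same overall structure as the paper: Assumption \ref{assump1} is by hypothesis, Assumption \ref{assump3} comes from conjugating Lichnerowicz' formula, and Assumption \ref{assump4} holds because the family is constant on each of $(-\infty,0]$ and $[1,\infty)$.

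The one point of divergence is Assumption \ref{assump2}. The paper does not argue directly with the coefficients of $D_t-D_s$ and cocompactness; instead it notes that $t\mapsto D_t$ has uniformly bounded weak derivative (citing \cite[\S4.3.3]{van_den_Dungen_2016}) and then applies Remark \ref{diffimpliescont} to deduce norm-continuity. This packages precisely the ``genuine technical load'' you flag at the end into an already-established estimate, so one never has to promote local smallness of coefficients to a global operator-norm bound by hand. Your direct route is also valid, but note that the phrase ``compact fundamental domain for the $G$-action'' only makes literal sense when $G$ is discrete and acts freely; in the general proper cocompact setting you would instead work on a compact set surjecting onto $M/G$ and use $G$-equivariance of $D_t-D_s$, which is essentially what Proposition \ref{haoguo35} encodes.
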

This proposition is proved in Subsection \ref{subsecspindiracop}. In particular, the equivariant spectral flow 
\[
\Sf_G(\DD) \in K_0(C^*_r(G))
\]
is defined.

 Let $h \in G$ and suppose that its centraliser $Z$ is unimodular. Suppose there is a dense Fr\'echet subalgebra $\A(G) \subset C^*(G)$ of functions on $G$ such that for all $f \in \A(G)$, the \emph{orbital integral}
 \[
 \tau_h(f) = \int_{G/Z} f(xhx^{-1})\, d(xZ)
 \]
 converges. Suppose, furthermore, that this defines a continuous linear functional on $\A(G)$, and that $\A(G)$ is closed under holomorphic functional calculus. Then $\tau_h$ is a trace on $\A(G)$ (see \cite[Lemma 2.2]{HW18}), and we obtain an induced map
 \begeq{eq tauh K}
 \tau_h\colon K_0(C^*(G)) = K_0(\A(G)) \to \mathbb{C}.
 \eneq

 \begin{theorem}\label{thm eta}
Suppose that either
\begin{itemize}
\item $G$ is discrete and finitely generated, the conjugacy class of $h$ has polynomial growth, and that either the action on $M$ is free, or $G$ has slow enough exponential growth;
\item $G$ is a connected, real semisimple Lie group, and $h$ is semisimple; or
\item $h=e$.
\end{itemize}
Then an algebra $\A(G)$ as in \eqref{eq tauh K} exists, 
the integrals (\ref{eq def eta g}) defining $\eta_h(D_0)$ and  $\eta_h(D_1)$ converge, and 
\[
\tau_h
(\Sf_{G}(\DD) )=\int_{M^h} f_h \frac{p^*\hat A(M^h)}{\det(1-hR^{N})^{1/2}}+\frac{1}{2}(\eta_h(D_0) - \eta_h(D_1)).
\]
\end{theorem}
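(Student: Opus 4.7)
The plan is to combine the spin-case ``index equals spectral flow'' theorem with a delocalised Atiyah--Patodi--Singer identity on $(M\times\mathbb{R}, dt^2+g_t)$.

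First I would verify that every ingredient on both sides of the identity is well-defined. In all three cases listed, a dense, holomorphically closed Fr\'echet subalgebra $\A(G)\subset \CS(G)$ on which $\tau_h$ is a continuous trace is known to exist (Jolissaint-type algebras for discrete groups of polynomial growth, Harish-Chandra's Schwartz algebra for semisimple Lie groups, and $L^1(G)$ in the case $h=e$), so that the induced map $\tau_h\colon K_0(\CS(G))\to\mathbb{C}$ in \eqref{eq tauh K} is defined. The Lichnerowicz identity \eqref{eq weitzenbock} applied to $g_0$ and $g_1$ gives uniform spectral gaps for $D_0$ and $D_1$, and this together with the growth/decay hypotheses used to build $\A(G)$ will be used to control the small- and large-$t$ ends of the integrand in \eqref{eq def eta g}, establishing convergence of $\eta_h(D_0)$ and $\eta_h(D_1)$.

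Second, I would rewrite the spectral flow as a Fredholm index of a Dirac operator on the product manifold. Proposition \ref{prop Spin SFG} together with Theorem \ref{propfredindeqsfg1} applied with $X=\mathbb{R}$ and $P_X=-i\partial_t$ yields
\[
\Sf_G(\DD)=\Index(-i\partial_t-i\DD)\in K_0(\CS(G)).
\]
Using the splitting \eqref{eqrestriction} and the parallel transport isometries \eqref{eqisometry}, the operator $-i\partial_t-i\DD$ will be identified, up to the natural grading, with the chirality-$+$ part of the $G$-equivariant spin Dirac operator $\slashed{D}_{M\times\mathbb{R}}$ of $(M\times\mathbb{R}, dt^2+g_t)$, following \cite[\S5]{bgm}; thus $\Sf_G(\DD)=\Index(\slashed{D}^+_{M\times\mathbb{R}})$.

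The final and most delicate step is to apply $\tau_h$ and invoke a delocalised APS formula for $\slashed{D}_{M\times\mathbb{R}}^+$ on the complete manifold $(M\times\mathbb{R}, dt^2+g_t)$, whose two ends $\{t\le 0\}$ and $\{t\ge 1\}$ are cylindrical with invertible boundary operators $D_0$ and $D_1$. I would run a McKean--Singer argument adapted to $\tau_h$: standard local equivariant index techniques at $h$-fixed points identify the small-$t$ limit of $\tau_h\bigl(e^{-t(\slashed D^+)^*\slashed D^+}-e^{-t\slashed D^+(\slashed D^+)^*}\bigr)$ with the equivariant $\hat A$-integrand localised at $(M\times\mathbb{R})^h=M^h\times\mathbb{R}$, while a Duhamel/transgression calculation on each cylindrical end produces the boundary contribution $\tfrac12(\eta_h(D_0)-\eta_h(D_1))$ with the correct sign. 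Because $g_t$ is constant outside $[0,1]$, the equivariant $\hat A$-form on $M^h\times\mathbb{R}$ has nontrivial $dt$-component only on $M^h\times[0,1]$; integrating in $t$ converts the interior integral into the Chern--Simons/transgression form $f_h$ on $M^h$, producing the stated term $\int_{M^h} f_h\,p^*\hat A(M^h)/\det(1-hR^{N})^{1/2}$. The main obstacle is precisely this delocalised APS formula on the non-cocompact product manifold: producing heat-kernel estimates uniform in $t$ and in the group direction that let $\tau_h$ commute with the small- and large-time limits, and carefully tracking the boundary transgression on each end. When such a formula is already available (e.g.\ via Hochs--Wang in the semisimple case, or Piazza--Schick/Xie--Yu in the discrete case) it is simply invoked; otherwise a cobordism/doubling argument against a closed reference manifold, combined with the variation-of-metric formula for $\eta_h$, would deliver the identity.
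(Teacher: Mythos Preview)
Your overall plan matches the paper's: realise $\Sf_G(\DD)$ as an index on $M\times\mathbb{R}$ and then apply a delocalised APS theorem. Two points, however, are inaccurate as written.

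First, the identification in your second step is not exact. By \cite[Equation (11)]{bgm} (recorded in Lemma \ref{lemmadifops}), the product operator $\widetilde P_D$ built from $-i\partial_t-i\DD$ and the spin Dirac operator $\slashed{D}_{M\times\mathbb{R}}$ differ by a zeroth-order term coming from the mean curvature of the slices $M_t$; they coincide only outside $M\times[0,1]$. The paper closes this gap by first translating the Hilbert-module Fredholm index of $\widetilde P_D$ into the localised $G$-index and localised equivariant coarse index of Subsections \ref{subseclocalgind}--\ref{subseclocaleqcoarind} (equation \eqref{eqfredindeqlocind} and Proposition \ref{guo2021}), and then showing via a resolvent expansion (Lemma \ref{lemmaequalityofindex}) that a cocompactly supported endomorphism perturbation does not change this localised index. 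This detour is not optional: the higher APS theorems invoked in \cite{xieyu, HWW22, PPST21} are stated for $\Index_G^{\mathrm{loc}}$ on the non-cocompact manifold $M\times\mathbb{R}$, not for the Fredholm index on $\ee^0$ directly, so you need Proposition \ref{prop SF APS} in full, not just Theorem \ref{propfredindeqsfg1}.

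Second, you have misread $f_h$. In the paper it is a cutoff function for the $Z$-action on the fixed-point set $(M\times[0,1])^h$, inserted so that the characteristic-class integral over a non-compact fixed set is finite; it is not a Chern--Simons transgression produced by integrating over $t$. The interior term is the standard equivariant $\hat A$-integral over $(M\times[0,1])^h$, simplified to an integral over $M^h$ via multiplicativity of $\hat A$ and $\hat A(\mathbb{R})=1$. Relatedly, the paper does not carry out a McKean--Singer/Duhamel computation at all: in each of the three cases it simply cites the relevant higher APS index theorem from the literature and combines it with Proposition \ref{prop SF APS}. Your sketch of how such a theorem would be proved is plausible background, but is neither needed nor the content of the paper's proof.
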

\begin{remark}
For the condition in the first case of Theorem \ref{thm eta} that a finitely generated, discrete group $G$ have slow enough exponential growth, see the second case of Corollary 2.10 in \cite{HWW22}.
\end{remark}

For a relation with $\rho$-invariants, we consider
 a smooth family $\{\hat{g}_t\}_{t\in \re}$  of Riemannian metrics on an odd dimensional compact spin manifold $\widehat{M}$. We assume that $\hat{g}_t=\hat{g}_0$ if $t\leq 0$ and $\hat{g}_t=\hat{g}_1$ if $t\geq 1$. We consider the universal cover of $\widehat{M}$, which we denote by $M$, and set $\Gamma:=\pi_1(\widehat{M})$, which acts properly, freely, isometrically and cocompactly on $M$. We then lift the family of metrics $\{\hat{g}_t\}_{t\in \re}$ to $\Gamma$-invariant metrics $\{g_t\}_{t\in \re }$ on $M$, and make the same construction as in the beginning of Subsection \ref{sec rho eta invar}, yielding a family of operators $\{D_t\}_{t\in \re}$ and $\Gamma$-spectral flow
 \begin{equation*}
 	\Sf_\Gamma(\DD)\in K_0(\CS_r(\Gamma)).
 \end{equation*}
 
 Consider the inclusion maps $j_1, j_2\colon M \hookrightarrow M \times [0,1]$, given by $j_k(m) = (m,k)$ for all $k \in \{0,1\}$ and $m \in M$. They induce maps
 \[
 \begin{split}
 (j_1)_*, (j_2)_*\colon K_0(D^*(M)^{\Gamma}) \to K_0(D^*(M \times [0,1])^{\Gamma})
 \end{split}
 \]
 as in Definition 1.6 in \cite{piazzaschick} and the text below it. Furthermore, the inclusion map $\iota\colon C^*(M\times [0,1])^{\Gamma} \hookrightarrow D^*(M\times [0,1])^{\Gamma}$ induces
 \[
 \iota_*\colon K_0(C^*_r(\Gamma)) = K_0(C^*(M\times [0,1])^{\Gamma}) \to K_0( D^*(M\times [0,1])^{\Gamma}).
 \] 
 For the equality of $K$-theory groups, which relies on compactness of $M/\Gamma$, see 
 Theorem 5.3.2 in \cite{WillettYu}. 

 \begin{theorem}\label{theoremmain}
 We have
\[
 (j_1)_*(\rho(g_1)) - (j_0)_*(\rho(g_0)) = 
\iota_*(\Sf_{\Gamma}(\DD)).
\]
\end{theorem}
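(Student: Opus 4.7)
My approach decomposes into three steps: (i) reformulate $\Sf_\Gamma(\DD)$ as the $K$-theoretic index of a Dirac--Schrödinger operator using Theorem \ref{propfredindeqsfg1}; (ii) identify this Dirac--Schrödinger operator with the spin Dirac operator on the warped cylinder $(M\times\re, dt^2 + g_t)$; and (iii) invoke the delocalised APS-type result of Piazza--Schick to identify the index of the cylinder Dirac operator with the difference of $\rho$-classes.

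For step (i), Proposition \ref{prop Spin SFG} guarantees that the family $\{D_t\}_{t\in\re}$ of (\ref{eqdiracops}) satisfies the hypotheses of Theorem \ref{propfredindeqsfg1}; note in particular that the family is locally constant outside $[0,1]$ since $g_t$ is stationary there, and invertibility on $\re\setminus[0,1]$ follows from Lichnerowicz' formula (\ref{eq weitzenbock}) applied to the positive scalar curvature metrics $g_0$ and $g_1$. Applying Theorem \ref{propfredindeqsfg1} with $X = \re$ and $P_X = -i\partial_t$ gives
\[
\Sf_\Gamma(\DD) = \Index(-i\partial_t - i\DD) \in K_0(\CS_r(\Gamma)).
\]
For step (ii), I use the splitting $\widetilde{S}|_{M_t} = S_t \oplus S_t$ of (\ref{eqrestriction}) together with the parallel transport $\tau_t$ of (\ref{eqisometry}) to conjugate the spin Dirac operator $\slashed{D}_{M\times\re}$ on the warped cylinder into an operator acting on $L^2(M\times\re, S_0\oplus S_0)$. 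In this gauge, with Clifford multiplication by $dt$ furnishing the $\bZ_2$-grading, $\slashed{D}_{M\times\re}$ takes the off-diagonal form whose chiral parts are $\pm\partial_t + D_t + R_t$, where $R_t$ is a bounded zeroth-order endomorphism supported in $M\times[0,1]$ arising from the $t$-derivative of the spin connection (trivial outside $[0,1]$ since $g_t$ is stationary there). Since compactly supported, bounded perturbations do not alter $K$-theoretic index classes, $\Index(-i\partial_t - i\DD)$ agrees with the index of the chiral component of $\slashed{D}_{M\times\re}$, viewed in $K_0(\CS_r(\Gamma)) = K_0(\CS(M\times[0,1])^\Gamma)$.

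For step (iii), I invoke the delocalised APS theorem of Piazza--Schick \cite{piazzaschick}: since $g_0$ and $g_1$ have positive scalar curvature, under the map $\iota_*\colon K_0(\CS(M\times[0,1])^\Gamma) \to K_0(D^*(M\times[0,1])^\Gamma)$ the index class of the spin Dirac operator on the cylinder maps precisely to $(j_1)_*(\rho(g_1)) - (j_0)_*(\rho(g_0))$. Combining the three steps yields the claim. I expect the main obstacle to lie in step (ii): one must carefully perform the gauge transformation by $\tau_t$, verify that the correction terms are indeed bounded and compactly supported zeroth-order endomorphisms rather than derivative terms, and reconcile the chiral decomposition (via Clifford multiplication by $dt$) with the off-diagonal form appearing in the Dirac--Schrödinger operator. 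Once this identification is made rigorous, steps (i) and (iii) can be applied essentially as black boxes.
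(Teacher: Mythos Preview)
Your three-step strategy matches the paper's exactly: the paper proves Proposition \ref{prop SF APS} (your steps (i)--(ii) combined) and then applies Piazza--Schick's Theorem 1.14 (your step (iii)). Two points in step (ii) need sharpening, however.

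First, you write that $R_t$ is ``compactly supported'' in $M\times[0,1]$. But $M$ is the universal cover of a compact manifold and is typically non-compact, so $M\times[0,1]$ is only \emph{cocompact} for the $\Gamma$-action. Consequently the endomorphism $A = \widetilde P - \widetilde P_D$ is cocompactly supported, not compactly supported, and the slogan ``compactly supported bounded perturbations preserve index'' does not apply verbatim. What makes the argument work is the equivariant Rellich lemma (Proposition \ref{lemmaeqrellich}): multiplication by a cocompactly supported $\Gamma$-invariant function is compact $\ee^1\to\ee^0$, so $A$ is a relatively compact perturbation on the Hilbert $C^*_r(\Gamma)$-module. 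The paper spells this out in Lemmas \ref{lemmadifops} and \ref{lemmaequalityofindex}, the latter computing explicitly via the integral formula \eqref{eqintegralformula} that the difference of the normalised operators is compact.

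Second, the bridge between the Fredholm index of $-i\partial_t - i\DD$ on $\ee^0(S_0)\otimes L^2(\re)$ (what Theorem \ref{propfredindeqsfg1} produces) and the localised coarse index $\Index_\Gamma^{\rm{loc}}(\slashed D_{M\times\re})\in K_0(C^*(M\times[0,1])^\Gamma)$ (what Piazza--Schick's theorem is stated for) is not automatic. You gesture at it via the identification $K_0(C^*_r(\Gamma)) = K_0(C^*(M\times[0,1])^\Gamma)$, but one must check that the two index constructions agree under this isomorphism. The paper invokes \eqref{eqfredindeqlocind} (i.e.\ \cite[Proposition 6.5]{guohochsvarg}) and Proposition \ref{guo2021} to pass through the localised $G$-index as an intermediary. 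This is the chain you are implicitly assuming when you say steps (i) and (iii) are black boxes.
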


Theorems \ref{thm eta} and \ref{theoremmain} are proved in Subsection \ref{subsecdelocrhoeta}. We point out some consequences.
 \begin{corollary}\label{corollary 1}
In the setting of Theorem \ref{theoremmain}, the class $\iota_*(\Sf_\Gamma(D_\bullet))$ does not depend on the path $(g_t)_{t \in [0,1]}$, only on its endpoints. In the setting of Theorem \ref{thm eta}, suppose that $M^h=\emptyset$. Then, the number $\tau_{h*}(\Sf_G(D_\bullet))$ does not depend on the path $\{D_t\}_{t\in [0,1]}$, only on its endpoints.
 \end{corollary}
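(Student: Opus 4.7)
The plan is to derive both statements as immediate consequences of Theorems \ref{theoremmain} and \ref{thm eta}, since in each case the formula provided expresses the equivariant spectral flow (after applying $\iota_*$ or $\tau_h$, respectively) as a difference of quantities depending only on the endpoints of the path.

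For the first claim, I would apply Theorem \ref{theoremmain} directly. That theorem gives
\[
\iota_*(\Sf_{\Gamma}(\DD)) = (j_1)_*(\rho(g_1)) - (j_0)_*(\rho(g_0)),
\]
and the right-hand side is built from the $\rho$-invariants of the two endpoint metrics $g_0$ and $g_1$, together with the canonical inclusion-induced maps $(j_0)_*$ and $(j_1)_*$. None of these objects involve the interpolating path $(g_t)_{t\in[0,1]}$, so the left-hand side can depend only on $g_0$ and $g_1$. This gives the first assertion.

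For the second claim, I would apply Theorem \ref{thm eta} under the additional assumption $M^h = \emptyset$. In that case the integral $\int_{M^h} f_h \, p^*\hat{A}(M^h)/\det(1-hR^N)^{1/2}$ vanishes (the domain of integration is empty), so the theorem reduces to
\[
\tau_h(\Sf_G(\DD)) = \tfrac{1}{2}\bigl(\eta_h(D_0) - \eta_h(D_1)\bigr).
\]
The delocalised $\eta$-invariants $\eta_h(D_0)$ and $\eta_h(D_1)$ are determined entirely by the Dirac operators $D_0$ and $D_1$ at the endpoints (which in turn depend only on $g_0$ and $g_1$), so again the path plays no role on the right-hand side.

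There is essentially no obstacle: both parts are pure logical extractions from the stated formulas, and there are no further analytic or $K$-theoretic manipulations required. The only point worth a brief sanity check is that in the setting of Theorem \ref{thm eta}, the hypotheses on $G$, $h$ and $\A(G)$ ensure that $\tau_h(\Sf_G(\DD))$ and the $\eta$-invariants at the endpoints are indeed well-defined, which is already built into the hypotheses we inherit.
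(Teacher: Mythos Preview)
Your proposal is correct and matches the paper's approach: the paper states Corollary \ref{corollary 1} without an explicit proof, treating both assertions as immediate consequences of the endpoint-only form of the right-hand sides in Theorems \ref{theoremmain} and \ref{thm eta}, exactly as you argue.
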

 
 Theorems \ref{thm eta} and \ref{theoremmain} imply the following homotopy invariance properties of delocalised $\eta$-invariants and $\rho$-invariants. These also follow directly from the relevant higher APS-index theorems, which are used in the proofs of  Theorems \ref{thm eta} and \ref{theoremmain}. (See Corollary 1.16 in \cite{piazzaschick} for a stronger result in the case of $\rho$-invariants). This is a re-interpretation in terms of a natural vanishing property of $G$-spectral flow, not an independent proof. 
 \begin{corollary}\label{corollary 2}
Suppose that $g_0$ and $g_1$ can be connected by a smooth path of $G$-invariant Riemannian metrics of positive scalar curvature. In the situation of  Theorem \ref{thm eta}, assume moreover that $G$ acts freely on $M$ and $h\neq e$. Then, we have $\eta_h(D_0) = \eta_h(D_1)$. In the situation of  Theorem \ref{theoremmain}, we have $(j_0)_*(\rho(g_0))  =  (j_1)_*(\rho(g_1))$.
 \end{corollary}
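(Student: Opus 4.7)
The strategy is to reduce to Proposition \ref{prop k equals 0} by switching from the given path of metrics to a positive scalar curvature path, and then to invoke Corollary \ref{corollary 1} for homotopy invariance of the relevant images of $G$-spectral flow. Let $\{g'_t\}_{t \in [0,1]}$ be a smooth path of $G$-invariant Riemannian metrics of positive scalar curvature connecting $g_0$ to $g_1$, which exists by hypothesis. Extend it to all of $\re$ by setting $g'_t = g_0$ for $t \leq 0$ and $g'_t = g_1$ for $t \geq 1$, with smooth transitions near $0$ and $1$. Carrying out the construction of Subsection \ref{sec rho eta invar} for this new path produces a family $\{D'_t\colon \Gamma_c^{\infty}(M,S_0)\to L^2(M,S_0)\}_{t\in\re}$ of the same form as \eqref{eq Dt Spin}, to which Proposition \ref{prop Spin SFG} applies by the same arguments.

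By Lichnerowicz' formula \eqref{eq weitzenbock}, the endomorphism $\slashed{\kappa}'_t$ associated with $D'_t$ is a positive multiple of the scalar curvature of $g'_t$. Since the path $\{g'_t\}_{t \in [0,1]}$ is smooth and consists of positive scalar curvature metrics, and since the extension is constant outside $[0,1]$, cocompactness of the $G$-action on $M$ yields a uniform lower bound $\slashed{\kappa}'_t \geq c > 0$ valid on all of $M \times \re$. By Proposition \ref{prop k equals 0}, applied with $K = \emptyset$, we conclude
\begin{equation*}
	\Sf_G(D'_\bullet) = 0 \quad \text{in } K_0(\CS(G)).
\end{equation*}

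It remains to transfer this vanishing to the original family $\DD$ and apply the two main theorems. In the setting of Theorem \ref{theoremmain}, Corollary \ref{corollary 1} gives $\iota_*(\Sf_\Gamma(\DD)) = \iota_*(\Sf_\Gamma(D'_\bullet)) = 0$, so that Theorem \ref{theoremmain} yields $(j_1)_*(\rho(g_1)) = (j_0)_*(\rho(g_0))$. In the setting of Theorem \ref{thm eta}, the assumption that $G$ acts freely together with $h \neq e$ forces $M^h = \emptyset$; hence the hypothesis of Corollary \ref{corollary 1} is satisfied and $\tau_h(\Sf_G(\DD)) = \tau_h(\Sf_G(D'_\bullet)) = 0$. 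Since the topological integral over $M^h$ in Theorem \ref{thm eta} also vanishes, we obtain $\eta_h(D_0) = \eta_h(D_1)$. The only nontrivial technical point, which I expect to be the main obstacle, is verifying the uniform positive lower bound on $\slashed{\kappa}'_t$; this is ultimately an exercise in combining smoothness of the path of metrics, its constancy outside a compact interval, and cocompactness of the $G$-action.
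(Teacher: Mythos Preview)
Your argument is correct and rests on the same key idea as the paper's proof: use the hypothesised positive-scalar-curvature path and apply Proposition \ref{prop k equals 0} to conclude that its $G$-spectral flow vanishes. The detour through Corollary \ref{corollary 1}, however, is unnecessary. Since the conclusions of Corollary \ref{corollary 2} involve only the endpoint data $g_0, g_1$ (and the corresponding $\eta$- and $\rho$-invariants), you may simply apply Theorems \ref{thm eta} and \ref{theoremmain} \emph{directly to the PSC path} $\{g'_t\}$; there is no need to relate $\Sf_G(D'_\bullet)$ back to the spectral flow of the original family. This is exactly what the paper does: the proof is one line, citing Proposition \ref{prop k equals 0} and Theorems \ref{thm eta} and \ref{theoremmain}. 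Your route works, but it uses Corollary \ref{corollary 1} (itself a consequence of the same theorems) as an intermediary where none is needed.
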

\begin{proof}
	The result follows immediately from Proposition \ref{prop k equals 0} and Theorems \ref{thm eta} and \ref{theoremmain}.
\end{proof}

\section{Construction of $G$-Spectral Flow}\label{secgspectralflow}
 
 In this section, we prove Propositions \ref{prop D dot Fred} and \ref{intgspecflo}, showing that equivariant spectral flow is a well-defined refinement of classical spectral flow.
 
  Let $M$, $E$, $G$ and $D$ be as at the start of Section \ref{secpreliminaries}. 
   In Subsections \ref{subsecadjointpos} and \ref{subseceqellana} we do not yet make the assumptions made in Subsection \ref{sec SFG}, such as compactness of $M/G$, as we will later apply the material in those subsections to the manifold $M\times X$, on which $G$ does not act cocompactly. The assumptions in Subsection \ref{sec SFG} will be made in Subsections \ref{sec pf SFG} and \ref{subsecintgspecflow}.

\subsection{Adjointability and Positivity}\label{subsecadjointpos}

In this subsection, we follow \cite[\S3]{haoguo} in order to investigate the conditions under which an operator $A\in \mathcal{B}(W^i(E),W^j(E))$ defines an adjointable operator in $\mathcal{L}(\ee^i(E),\ee^j(E))$. We also relate fiberwise positivity of $G$-equivariant endomorphisms with positivity on the module $\ee^0(E)$. 

 We shall need the following lemma:
\begin{lemma}[\rm{\cite[Lemma 3.7]{haoguo}}]\label{haoguo37}
 Suppose that $M/G$ is compact. Let $A\in \mathcal{B}(W^i(E))$ be a $L^2$-positive operator  with compactly supported distributional kernel. Then the element 
	\begin{equation*}\left\langle \left(\int_G g(A)\,dg\right)(s),s\right\rangle_{\ee^i(E)}\in \CS(G)_+
	\end{equation*}	
 is positive in $\CS(G)$ for every $s\in \cs$.
\end{lemma}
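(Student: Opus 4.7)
The plan is to realise $T := \int_G g(A)\,dg$ as a positive adjointable operator on the Hilbert $\CS(G)$-module $\ee^i(E)$, so that the conclusion reduces to the built-in positivity $\langle v, v\rangle \in \CS(G)_+$ of any Hilbert module inner product. The integral defining $T$ converges strongly on $\cs$ because the compact support of $k_A$ together with properness of the $G$-action confines the contributing $g$'s to a compact subset for each fixed $s$. The resulting $T$ is $G$-equivariant by left-invariance of Haar measure (the substitution $g\mapsto hg$ shows $hTh^{-1}=T$), and it inherits $L^2$-positivity from $A$:
\[
\langle Ts, s\rangle_{L^2(E)} = \int_G \langle A g^{-1}s,\, g^{-1}s\rangle_{L^2(E)}\,dg \geq 0.
\]

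In the case $i = 0$, I would apply Borel functional calculus on $L^2(E)$ to obtain a bounded positive square root $T^{1/2}$, which remains $G$-equivariant as a norm limit of polynomials in $T$. The extension procedure developed earlier in \cite{haoguo}, promoting $G$-equivariant $L^2$-bounded operators to adjointable operators on $\ee^0(E)$, then makes $T^{1/2}$ a self-adjoint adjointable operator on $\ee^0(E)$. Setting $u := T^{1/2} s$, the $L^2$-self-adjointness and $G$-equivariance of $T^{1/2}$ yield
\[
\left\langle Ts, s\right\rangle_{\ee^0(E)}(h) = \left\langle T^{1/2}s,\, h\,T^{1/2}s\right\rangle_{L^2(E)} = \left\langle u, u\right\rangle_{\ee^0(E)}(h),
\]
and the right-hand side lies in $\CS(G)_+$ by the Hilbert module axioms.

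For $i > 0$ the derivatives $D^l$ entering $\langle\cdot,\cdot\rangle_{\ee^i(E)}$ need not commute with $A$, so the direct square-root trick fails. My approach is to use the isometric embedding $\iota\colon \ee^i(E) \hookrightarrow \ee^0\bigl(\bigoplus_{l=0}^{i} E\bigr)$, $s \mapsto \bigoplus_l D^l s$, and to transfer the problem to an analogous statement for a $G$-equivariant, $L^2$-positive operator on the enlarged bundle $\bigoplus_l E$; the $i = 0$ argument above then applies. The main obstacle is constructing this enlarged operator as a genuine $L^2$-positive extension of the restriction of $T$ to the image of $\iota$ while controlling the commutators $[D^l, A]$; the combination of compact support of $k_A$ and $A \in \mathcal{B}(W^i(E))$ is what makes these commutators tractable and allows the factorisation needed to identify $\langle \iota(Ts), \iota(s)\rangle_{\ee^0(\bigoplus_l E)}$ with an expression of the form $\langle w, w\rangle$ in $\ee^0(\bigoplus_l E)$.
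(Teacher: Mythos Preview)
The paper does not prove this lemma; it quotes it from \cite[Lemma 3.7]{haoguo}. So there is no in-paper argument to compare against, and the question is simply whether your proposal stands on its own.

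There is a genuine circularity in your $i=0$ step. You want to conclude that $T^{1/2}$ is an adjointable operator on $\ee^0(E)$ by invoking ``the extension procedure developed earlier in \cite{haoguo}, promoting $G$-equivariant $L^2$-bounded operators to adjointable operators on $\ee^0(E)$''. But that extension procedure is exactly \cite[Proposition 3.5]{haoguo} (restated here as Proposition~\ref{haoguo35}), and its proof \emph{uses} the lemma you are trying to prove. Without that, you have only $T^{1/2}s\in L^2(E)$; you do not know that $T^{1/2}s\in\ee^0(E)$, i.e.\ that it is an $\ee^0$-norm limit of compactly supported smooth sections. The pointwise identity
\[
\langle Ts,\,hs\rangle_{L^2(E)}=\langle T^{1/2}s,\,h\,T^{1/2}s\rangle_{L^2(E)}
\]
is fine as an equality of functions of $h$, and the left-hand side does lie in $C_c(G)$, but the right-hand side is only of the form $\langle u,u\rangle_{\ee^0(E)}$ once you know $u=T^{1/2}s\in\ee^0(E)$. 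Approximating $T^{1/2}$ by polynomials $p_n(T)$ does not help either: $T$ is $G$-equivariant, so $p_n(T)s$ need not be compactly supported, and you cannot control $\|p_n(T)s-p_m(T)s\|_{\ee^0(E)}$ without again appealing to the lemma. In short, the square-root trick presupposes precisely the $\ee^0$-boundedness that the lemma is the tool for establishing.

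Your $i>0$ paragraph is a sketch rather than a proof: you identify the obstacle (commutators $[D^l,A]$) but do not construct the promised $L^2$-positive operator on $\bigoplus_l E$ or carry out the factorisation. Even granting the $i=0$ case, this part would need substantial work to become an argument.
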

Proposition \ref{haoguo35} below is a slighted modified version of \cite[Proposition 3.5]{haoguo}, which still holds true if the locality assumption is replaced by the weaker condition of $A$ having a properly supported Schwartz kernel. In the latter case, the constant $C_{\co}$ in (\ref{l2e0norms}) depends on the support of $\co^2 A^*A+A^*A\co^2$, instead of only on the choice of cutoff function $\co$.
\begin{proposition}\label{haoguo35} 
Suppose that $M/G$ is compact. Let $A\colon \cs\to\cs$ be a $G$-equivariant, local operator which defines a bounded operator in $\mathcal{B}(W^i(E), W^j(E))$. Then, $A$ defines an adjointable operator in $\mathcal{L}(\ee^i(E),\ee^j(E))$, and

	\begin{equation}\label{l2e0norms}
		\|A\|_{\mathcal{L}(\ee^i(E),\ee^j(E))}\leq C_{\co}\cdot\|A\|_{\mathcal{B}(W^i(E),W^j(E))},
	\end{equation}
	where the constant $C_{\co}$ depends only on the choice of cutoff function $\co$.
\end{proposition}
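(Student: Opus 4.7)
Plan:

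The proof follows the strategy of \cite[Proposition 3.5]{haoguo}. The argument splits into two parts: first, establishing the norm estimate $\|As\|_{\ee^j(E)} \leq C_\co \|A\|_{\mathcal{B}(W^i(E), W^j(E))} \|s\|_{\ee^i(E)}$ on the dense subspace $\cs \subset \ee^i(E)$ and extending $A$ by continuity; and second, producing an adjoint in $\mathcal{L}(\ee^j(E), \ee^i(E))$ via a parallel construction.

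For the norm estimate, the starting point is the classical Hilbert space operator inequality
\[
\|A\|_{\mathcal{B}(W^i, W^j)}^2 \cdot \mathrm{Id}_{W^i(E)} - A^*A \geq 0
\]
on $W^i(E)$, where $A^*$ denotes the Hilbert space adjoint of $A\colon W^i(E) \to W^j(E)$. The plan is to upgrade this to the $\CS(G)$-valued positivity
\[
C_\co^2 \cdot \|A\|_{\mathcal{B}(W^i, W^j)}^2 \cdot \langle s, s\rangle_{\ee^i(E)} - \langle As, As\rangle_{\ee^j(E)} \in \CS(G)_+,
\]
from which the norm bound follows by taking $\CS(G)$-norms. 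The upgrade proceeds by localizing the $W^i$-level inequality using the cutoff function $\co$ (which we may take compactly supported, since $M/G$ is compact) to produce a $G$-invariant, $L^2$-positive auxiliary operator with compactly supported distributional kernel, and then invoking Lemma \ref{haoguo37}. Locality of $A$ is exactly what guarantees that this localized operator genuinely has compactly supported kernel, and is the reason the resulting constant $C_\co$ depends only on the cutoff function.

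For adjointability, consider the formal $L^2$-adjoint $\tilde A\colon \cs \to \cs$ of $A$, which is again local and $G$-equivariant. A parallel application of the norm estimate, together with a duality argument showing that $\tilde A$ is bounded between the relevant Sobolev completions, yields a bounded $\CS(G)$-linear operator $A^\# \colon \ee^j(E) \to \ee^i(E)$. The adjoint relation $\langle As, t\rangle_{\ee^j(E)} = \langle s, A^\# t\rangle_{\ee^i(E)}$ holds for $s, t \in \cs$ by the pointwise-in-$g$ identity $\langle As, g(t)\rangle_{L^2} = \langle s, g(\tilde A t)\rangle_{L^2}$ combined with the $G$-equivariance of $A$ and $\tilde A$, and extends to all of $\ee^i(E) \times \ee^j(E)$ by continuity.

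The main technical obstacle is the positivity upgrade in the norm estimate: turning a classical Hilbert space inequality on $W^i(E)$ into a $\CS(G)$-valued positivity statement requires a careful localization-and-averaging argument, and the only available tool for such positivity at the level of $\ee^i$-inner products is Lemma \ref{haoguo37}. Locality of $A$ and compact support of $\co$ are both indispensable here: weakening locality to the properly supported case breaks the clean $C_\co$ dependence and forces the constant to additionally depend on the support of $\co^2 A^*A + A^*A\co^2$, as recorded in the remark preceding the proposition.
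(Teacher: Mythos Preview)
Your overall plan matches the paper's: establish the $\CS(G)$-valued norm inequality by localising a Hilbert-space positivity statement with the cutoff $\co$ and feeding the resulting compactly-kernelled operator into Lemma~\ref{haoguo37}, then produce an adjoint by a parallel argument. Two points need correction.

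First, a minor slip: the auxiliary operator you feed into Lemma~\ref{haoguo37} is \emph{not} $G$-invariant --- an operator with compactly supported kernel on a noncompact proper $G$-manifold cannot be $G$-invariant unless it is zero. The paper constructs $A_1 := (\co^2 A^*A + A^*A\co^2)/2$ and then $A_2 := \co_1^2\|A\|^2\|\co^2\|_\infty - A_1$, with $\co_1\equiv 1$ on $\Supp(\co)$; locality of $A$ forces $A_2$ to have compactly supported kernel, and Lemma~\ref{haoguo37} itself performs the $G$-averaging, yielding $\int_G g(A_1)\,dg = A^*A$.

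Second, and more seriously, your adjointability argument does not work as stated. The $\ee^j$-inner product is \emph{not} built from the $L^2$-pairing: by definition
\[
\langle s_1,s_2\rangle_{\ee^j(E)}(g) \;=\; \sum_{l=0}^{j}\langle D^l s_1,\,g(D^l s_2)\rangle_{L^2(E)} \;=\; \langle s_1,\,g(s_2)\rangle_{W^j(E)}.
\]
So the identity $\langle As,g(t)\rangle_{L^2} = \langle s, g(\tilde A t)\rangle_{L^2}$ for the formal $L^2$-adjoint $\tilde A$ says nothing about $\langle As,t\rangle_{\ee^j}$ once $j>0$. Moreover, there is no duality argument available that makes $\tilde A$ bounded $W^j\to W^i$; the formal $L^2$-adjoint of a bounded map $W^i\to W^j$ lands in the wrong Sobolev scale. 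The paper instead takes $A^*\in\mathcal{B}(W^j(E),W^i(E))$ to be the \emph{Sobolev} adjoint of $A$. This is automatically bounded, is $G$-equivariant because the $W^\bullet$-inner products are, and the computation
\[
\langle As,t\rangle_{\ee^j}(g)=\langle As,g(t)\rangle_{W^j}=\langle s,A^*g(t)\rangle_{W^i}=\langle s,g(A^*t)\rangle_{W^i}=\langle s,A^*t\rangle_{\ee^i}(g)
\]
gives the adjoint relation directly. Boundedness of $A^*$ at the $\ee^\bullet$-level then follows by rerunning the first half of the argument with $AA^*$ in place of $A^*A$.
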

\begin{proof}
	 Let $A^*\in\mathcal{B}(W^j(E),W^i(E))$ denote the adjoint of $A$. The operator $A_1:=(\co^2 A^*A+A^*A\co^2)/2$ is self-adjoint, has compactly supported distributional kernel and is bounded on $W^i(E)$ by $\|A\|^2\cdot\|\co^2\|_{\infty}$, where $\|A\|$ denotes the operator norm of $A\colon W^i(E)\to W^j(E)$. Let $\co_1$ be a non-negative, compactly supported function on $M$ which is identically equal to $1$ on the support of $\co$. Then the operator $A_2:=\co_1^2\|A\|^2\|f^2\|_{\infty}-A_1$ is positive, bounded and has compactly supported distributional kernel (this is where we use the locality assumption). The proof then follows just like in \cite{haoguo}: one applies Lemma \ref{haoguo37} to the operator $A_2$ so that, for every $s\in\cs$,
	\begin{equation*}
		\left\langle s,\left(\int_G g(A_2)\,dg\right)s\right\rangle_{\ee^i(E)}=\left (\int_G g(\co_1^2)\|A\|^2\|\co^2\|_{\infty}\,dg\right)\langle s,s\rangle_{\ee^i(E)}-\langle s,A^*A(s)\rangle_{\ee^i(E)},
	\end{equation*} 
is positive in $\CS(G)$, where we use the fact that $\int_G g(A_1)\,dg=A^*A$. It follows that
\begin{equation*}
\langle A(s),A(s)\rangle_{\ee^j(E)}=\langle s,A^*A(s)\rangle_{\ee^i(E)}\leq C\cdot\|A\|^2\cdot\|\co^2\|_{\infty}\cdot\langle s,s\rangle_{\ee^i(E)},
\end{equation*} 
in $\CS(G)$, where $C:= \int_G g(\co_1^2)\,dg$. Consequently, $A$ extends to an operator in $\mathcal{L}(\ee^i(E),\ee^j(E))$, since by a similar argument one can check that $A^*\colon W^j(E)\to W^i(E)$ defines a bounded adjoint for $A$ of $\mathcal{L}(\ee^j(E),\ee^i(E))$.
\end{proof}

As a corollary, if $M/G$ is compact, then the operator $D^k$ defines an element of $\mathcal{L}(\ee^{k}(E),\ee^0(E))$ for every $k\geq 0$. This result can be extended to non-cocompact actions:\begin{proposition}[\rm{\cite[Proposition 3.8]{haoguo}}]\label{adjointdirac}
 For all $k\geq 0$, the operator $D^k$ defines an element of $\mathcal{L}(\ee^{j+k}(E),\ee^j(E))$. 
\end{proposition}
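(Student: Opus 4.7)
The plan is to split the argument into two pieces: boundedness, which is immediate from the structure of the defining inner products, and adjointability, which is the real content.

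For boundedness, for every $s\in\cs$ the defining $\CS(G)$-valued inner products give
\begin{equation*}
\langle D^k s, D^k s\rangle_{\ee^j(E)} = \sum_{l=0}^{j}\langle D^{l+k}s, D^{l+k}s\rangle_{\ee^0(E)} \leq \sum_{m=0}^{j+k}\langle D^m s, D^m s\rangle_{\ee^0(E)} = \langle s, s\rangle_{\ee^{j+k}(E)},
\end{equation*}
as an inequality of positive elements of $\CS(G)$, since the left-hand sum is a subsum of the right-hand one. Taking $\CS(G)$-norms yields $\|D^ks\|_{\ee^j(E)}\leq \|s\|_{\ee^{j+k}(E)}$, so $D^k$ extends by continuity to a bounded $\CS(G)$-linear map $\ee^{j+k}(E)\to\ee^j(E)$ of norm at most one.

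For adjointability, which is the main difficulty because bounded module maps between Hilbert $\CS(G)$-modules need not be adjointable, I would appeal to the regular self-adjointness of $D$ on $\ee^0(E)$ noted in Subsection \ref{subsecgsobolevmod} (valid for $D$ of finite propagation speed, \cite[Proposition 5.5]{haoguo}) and pass to the Borel functional calculus of $D$. This provides unbounded operators $(1+D^2)^{\alpha}$ on $\ee^0(E)$ for every $\alpha\in\re$. One shows that the graph norms of $D^0,\ldots,D^l$ are jointly equivalent to the graph norm of $(1+D^2)^{l/2}$, so that $U_l := (1+D^2)^{l/2}\colon \ee^l(E)\to \ee^0(E)$ is a Hilbert-$\CS(G)$-module isomorphism. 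Under $U_{j+k}$ and $U_j$, the bounded operator $D^k$ is then carried to
\begin{equation*}
U_j\, D^k\, U_{j+k}^{-1} = (1+D^2)^{j/2}\,D^k\,(1+D^2)^{-(j+k)/2} = D^k(1+D^2)^{-k/2}\in\mathcal{L}(\ee^0(E)),
\end{equation*}
using commutativity within the functional calculus of $D$. Since $x\mapsto x^k(1+x^2)^{-k/2}$ is a bounded real-valued function, the right-hand side is a bounded self-adjoint (in particular adjointable) element of $\mathcal{L}(\ee^0(E))$. Pulling back via the $U_l$ yields the adjointability of $D^k\colon \ee^{j+k}(E)\to\ee^j(E)$.

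The main obstacle lies in the comparison of norms on $\ee^l(E)$: one must verify that the defining inner product (\ref{eqeknorm}) induces the same Hilbert-$\CS(G)$-module topology as the functional-calculus norm $\|(1+D^2)^{l/2}\cdot\|_{\ee^0(E)}$, so that the $U_l$ are honest isomorphisms of Hilbert modules rather than merely bounded bijections. This is a spectral-theoretic comparison of graph norms that relies crucially on the regular self-adjointness of $D$ on $\ee^0(E)$, which is precisely where the finite-propagation-speed hypothesis enters.
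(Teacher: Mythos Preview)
The paper does not supply its own proof of this proposition; it is imported verbatim from \cite[Proposition 3.8]{haoguo}. So there is no in-paper argument to compare against, and your proposal must be judged on its own merits.

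Your boundedness step is clean and correct: the inequality $\langle D^ks,D^ks\rangle_{\ee^j(E)}\le\langle s,s\rangle_{\ee^{j+k}(E)}$ in $\CS(G)_+$ is exactly the subsum observation, and extends to give a contractive $\CS(G)$-linear map.

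The adjointability step, however, has a genuine gap. You invoke regular self-adjointness of $D$ on $\ee^0(E)$, which in this paper is \emph{not} a standing hypothesis: it is established only under finite propagation speed (\cite[Proposition 5.5]{haoguo}) or cocompactness (\cite[Theorem 5.8]{kasparov}), and the opening of Section~\ref{secgspectralflow} explicitly says that Subsections~\ref{subsecadjointpos}--\ref{subseceqellana} do \emph{not} assume cocompactness. The proposition is stated precisely as an extension to the non-cocompact case, so your appeal to regular self-adjointness imports an assumption the statement does not have.

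Even granting that assumption, the step you yourself flag as ``the main obstacle'' is not resolved. Identifying $\ee^l(E)$ with $\Dom((1+D^2)^{l/2})$ with equivalent Hilbert-module norms amounts to showing that $\cs$ is a core for $(1+D^2)^{l/2}$, equivalently that the closure of $D^l$ on $\cs$ coincides with the $l$-th power of the closure of $D$. For $l\ge 2$ this is not automatic in the Hilbert-module setting and needs its own argument (for instance via finite-propagation of $e^{itD}$, which again leans on extra hypotheses). So as written your argument is a plausible route under additional assumptions, with the key identification still outstanding; it does not establish the proposition in the generality in which it is stated and used.
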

 We finish this subsection with a proof of the following fact about positivity of $G$-equivariant endomorphisms on $\ee^0(E)$:
\begin{lemma}\label{positivitynoncocomp}
Let $A\in \End(E)^G$ be a $G$-equivariant endomorphism and suppose that $A\geq c>0$ fiberwise. Then $A\geq c>0$ on $\ee^0(E)$, that is,
\begin{equation}
	\langle (A-c)s,s\rangle_{\ee^0(E)},
\end{equation}
for every $s\in \cs$.
\end{lemma}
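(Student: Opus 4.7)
The plan is to exhibit $A-c$ as $B_\epsilon^2$ for a smooth, $G$-equivariant, fiberwise self-adjoint section $B_\epsilon$ of $\End(E)$ (up to a regularisation parameter $\epsilon$); once this is done,
\[
\langle (A-c)s, s\rangle_{\ee^0(E)} = \langle B_\epsilon s, B_\epsilon s\rangle_{\ee^0(E)} \geq 0
\]
would follow from the defining positivity of the Hilbert $\CS(G)$-module inner product on $\ee^0(E)$. Since the fiberwise square root of $A-c$ need not be smooth on $\ker(A-c)$, I will first regularise and pass to the limit.

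Fix $\epsilon>0$ and set $A_\epsilon := A - c + \epsilon \in \End(E)^G$, which is smooth, $G$-equivariant, and satisfies $A_\epsilon \geq \epsilon>0$ fiberwise. Locally in a Hermitian trivialisation of $E$, $A_\epsilon$ is a smooth map into positive definite Hermitian matrices, and the matrix square root is smooth on the open cone of positive definite Hermitian matrices; patching local frames yields a smooth, $G$-equivariant, fiberwise self-adjoint section $B_\epsilon := A_\epsilon^{1/2} \in \End(E)^G$ with $B_\epsilon^2 = A_\epsilon$. Because $B_\epsilon$ is $G$-equivariant and fiberwise self-adjoint, for every $s_1, s_2 \in \cs$ and $g \in G$,
\[
\langle B_\epsilon s_1, s_2\rangle_{\ee^0(E)}(g) = \langle B_\epsilon s_1, g s_2\rangle_{L^2(E)} = \langle s_1, B_\epsilon g s_2\rangle_{L^2(E)} = \langle s_1, g B_\epsilon s_2\rangle_{L^2(E)} = \langle s_1, B_\epsilon s_2\rangle_{\ee^0(E)}(g),
\]
so $B_\epsilon$ is $\ee^0$-symmetric. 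Consequently, for every $s \in \cs$,
\[
\langle A_\epsilon s, s\rangle_{\ee^0(E)} = \langle B_\epsilon^2 s, s\rangle_{\ee^0(E)} = \langle B_\epsilon s, B_\epsilon s\rangle_{\ee^0(E)} \geq 0 \quad \text{in } \CS(G).
\]

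Finally, I let $\epsilon \to 0^+$. The difference $A_\epsilon s - (A-c)s = \epsilon s$ tends to zero in $\ee^0$-norm, so continuity of the $\CS(G)$-valued inner product gives
\[
\langle A_\epsilon s, s\rangle_{\ee^0(E)} \longrightarrow \langle (A-c) s, s\rangle_{\ee^0(E)} \quad \text{in } \CS(G).
\]
Closedness of the positive cone in $\CS(G)$ then yields the desired inequality $\langle (A-c)s, s\rangle_{\ee^0(E)} \geq 0$. The only mildly delicate point is the smoothness and $G$-equivariance of the regularised square root $B_\epsilon$, but this is a purely local statement which follows from the smoothness of the matrix square root on positive definite Hermitian matrices, applied in a trivialising frame.
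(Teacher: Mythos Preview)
Your proof is correct and follows the same idea as the paper's: exhibit $A-c$ as $B^2$ for a $G$-equivariant, fiberwise self-adjoint endomorphism $B$, and use $\langle (A-c)s,s\rangle_{\ee^0(E)} = \langle Bs,Bs\rangle_{\ee^0(E)} \geq 0$. The paper simply takes $B$ to be the fiberwise positive square root of $A-c$ directly, without the $\epsilon$-regularisation; your extra care in ensuring smoothness of $B_\epsilon$ before passing to the limit is a harmless refinement (continuity of $B$ already suffices for $Bs$ to lie in $\ee^0(E)$, which is all the paper uses).
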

\begin{proof}
Let $B\in End(E)^G$ be the fiberwise positive square-root of $A-c\in \End(E)^G$. It follows that
\begin{equation*}
	\begin{split}
		\langle (A-c)s,s\rangle_{\ee^0(E)}(g)&=\int_M\langle ((A-c)(m))s(m),(g(s))(m)\rangle_{E_m}\,d\mu(m)\\
		&=\int_M\langle B(m)s(m),(g(Bs))(m)\rangle_{E_m}\, d\mu(m)\\
		&=\langle Bs,Bs\rangle_{\ee^0(E)}(g),
	\end{split}
\end{equation*}
for each $s\in\cs$ and $g\in G$, where we used $G$-equivariance of $B$. The result now follows from noting that $\langle Bs,Bs\rangle_{\ee^0(E)} \geq 0$ on $\CS(G)$.
\end{proof}
\subsection{Equivariant Elliptic Analysis}\label{subseceqellana}

The analogue of the non-compact Rellich Lemma on Sobolev $G$-modules can be stated as follows:

\begin{proposition}[\rm{\cite[Theorem 3.12]{haoguo}}\label{lemmaeqrellich}]
	Let  $f\colon M\to \com$ be a cocompactly supported $G$-invariant function. Then, multiplication by $f$ is a compact operator $\ee^s(E)\to\ee^t(E)$ if $s>t$.
\end{proposition}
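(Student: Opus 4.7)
The plan is to factor $M_f$ through a smoothing resolvent of $D$, reducing the Rellich-type statement to compactness of a cocompactly supported multiplier times $(1+D^2)^{-k/2}$ on the Hilbert $\CS(G)$-module $\ee^0(E)$. This is the equivariant analogue of the classical argument that $M_\phi (1+\Delta)^{-N}$ is trace-class (hence compact) for $N$ large and $\phi$ compactly supported on an ordinary Riemannian manifold.

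By Proposition \ref{haoguo35} applied to the local operator $M_f$ with cocompact support, together with Proposition \ref{adjointdirac}, multiplication by $f$ is adjointable in $\mathcal{L}(\ee^s(E),\ee^t(E))$. Using interpolation and continuity of the embeddings $\ee^a(E)\hookrightarrow \ee^b(E)$ for $a\geq b$, I would reduce to the case $s=k\in\mathbb{Z}_+$ and $t=0$ and then factor
\[
M_f = \bigl(M_f(1+D^2)^{-k/2}\bigr)\circ (1+D^2)^{k/2},
\]
where $(1+D^2)^{k/2}\colon \ee^k(E) \to \ee^0(E)$ is a bounded isomorphism by equivalence of the $\ee^k$-norm with the graph norm of $(1+D^2)^{k/2}$. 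It thus suffices to prove that $M_f(1+D^2)^{-k/2}\in \mathcal{K}(\ee^0(E))$ for $k$ sufficiently large. For $k>\dim M/2$, elliptic regularity supplies a continuous, $G$-equivariant Schwartz kernel $R(x,y)$ for $(1+D^2)^{-k/2}$. Picking $\co \in C_c^\infty(M)$ with $\int_G \co(g^{-1}m)^2\,dg\equiv 1$ on $\mathrm{supp}(f)$ and using $G$-invariance of $f$ together with $G$-equivariance of $R$, one obtains the averaging identity
\[
M_f(1+D^2)^{-k/2} = \int_G g\circ\bigl(M_{\co^2 f}(1+D^2)^{-k/2}\bigr)\circ g^{-1}\,dg
\]
on $\cs$. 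After truncating $R$ to a neighborhood of the diagonal via finite-propagation functional calculus applied to $D$, the integrand $M_{\co^2 f}(1+D^2)^{-k/2}$ acquires a smooth, compactly supported kernel, approximable uniformly by finite sums of $C_c^\infty$-tensors $\sum_i \alpha_i(x)\overline{\beta_i(y)}$. Each such tensor induces a rank-one module operator $\theta_{\alpha_i,\beta_i}(s) = \alpha_i\cdot \langle \beta_i,s\rangle_{\ee^0(E)}$, placing $M_{\co^2 f}(1+D^2)^{-k/2}$ in $\mathcal{K}(\ee^0(E))$.

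The main obstacle is justifying that the $G$-integral above converges in the operator norm of $\mathcal{L}(\ee^0(E))$ and that its limit remains in the closed ideal $\mathcal{K}(\ee^0(E))$. Cocompactness of $\mathrm{supp}(f)$ ensures the integrand is uniformly norm-bounded in $g$, and via the cutoff identity the integral effectively reduces to integration over a compact fundamental domain for $G$ acting on $\mathrm{supp}(f)$. Combined with the closedness and $G$-invariance of $\mathcal{K}(\ee^0(E))\subset \mathcal{L}(\ee^0(E))$, this yields $M_f(1+D^2)^{-k/2}\in \mathcal{K}(\ee^0(E))$, from which compactness of $M_f\colon \ee^s(E) \to \ee^t(E)$ follows via the factorization above.
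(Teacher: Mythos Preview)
The paper does not supply a proof of this proposition; it is quoted verbatim from \cite[Theorem 3.12]{haoguo}. So there is no in-paper argument to compare against, and your proposal has to stand on its own.

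Your overall strategy---factor through $(1+D^2)^{-k/2}$, truncate the kernel near the diagonal, approximate a compactly supported smooth kernel by elementary tensors, then $G$-average---is the right shape, but one step is incorrect as written. You assert that a kernel $\alpha(x)\overline{\beta(y)}$ ``induces a rank-one module operator $\theta_{\alpha,\beta}$''. It does not: the integral operator with that kernel sends $s\mapsto \alpha\cdot\langle\beta,s\rangle_{L^2(E)}$, whereas $\theta_{\alpha,\beta}(s)=\alpha\cdot\langle\beta,s\rangle_{\ee^0(E)}$, and these differ exactly by the $G$-averaging. What \emph{is} true is that $\int_G g\circ T_{\alpha\otimes\beta}\circ g^{-1}\,dg=\theta_{\alpha,\beta}$, so the tensor approximation must be carried out \emph{before} averaging, and you then need continuity of the averaging map from a suitable kernel norm (on kernels with fixed compact support) into $\mathcal{L}(\ee^0(E))$. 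This is close to the content of Lemma~\ref{haoguo37} and Proposition~\ref{haoguo35}, but you do not invoke them, and your own justification in the last paragraph does not work: the integrand $g\mapsto M_{g(\co^2)f}(1+D^2)^{-k/2}$ is nonzero whenever $g\cdot\Supp(\co)$ meets $\Supp(f)$, and since $\Supp(f)$ is $G$-invariant and cocompact (possibly all of $M$), this holds for every $g\in G$. So the integral is genuinely over all of $G$, not over a compact set, and uniform boundedness of the integrand is not enough to conclude norm convergence or that the limit lies in $\mathcal{K}(\ee^0(E))$. A cleaner route is to observe that $M_f(1+D^2)^{-k/2}$ already has a $G$-equivariant, properly supported smooth kernel and to argue directly that such operators lie in $\mathcal{K}(\ee^0(E))$; this is essentially what Guo does.
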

\begin{corollary}\label{compactincl}
Suppose that $M/G$ is compact. Then the inclusion $\ee^s(E)\hookrightarrow\ee^t(E)$ is compact if $s>t$.
\end{corollary}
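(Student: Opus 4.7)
The plan is to obtain Corollary \ref{compactincl} as an essentially immediate consequence of the equivariant Rellich lemma (Proposition \ref{lemmaeqrellich}) by exhibiting the inclusion map itself as multiplication by a cocompactly supported $G$-invariant function. Specifically, I would consider the constant function $f \equiv 1$ on $M$. This function is manifestly $G$-invariant, and its support is all of $M$. The key observation is that under the assumption $M/G$ compact, the whole manifold $M$ is cocompact in itself, so $f$ qualifies as a cocompactly supported $G$-invariant function in the sense required by Proposition \ref{lemmaeqrellich}.

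With this observation in hand, the conclusion is immediate: multiplication by $f \equiv 1$ is precisely the inclusion map $\ee^s(E) \hookrightarrow \ee^t(E)$, which by Proposition \ref{lemmaeqrellich} is a compact operator whenever $s > t$. Thus the only real content of the corollary is the translation of the hypothesis ``$M/G$ compact'' into the hypothesis ``the constant function $1$ is cocompactly supported,'' after which Proposition \ref{lemmaeqrellich} supplies everything. There is no serious obstacle; the statement is really a convenient packaging of the fact that, once one is working on a cocompact $G$-manifold, cocompactness of supports becomes automatic for all $G$-invariant functions, and in particular the identity operator on the Sobolev modules inherits the compactness that a priori would only be granted after cutting off by a cocompactly supported multiplier.
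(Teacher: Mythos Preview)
Your proposal is correct and is exactly the intended argument: the paper states the result as an immediate corollary of Proposition~\ref{lemmaeqrellich} without further proof, and the implicit step is precisely to take $f\equiv 1$, which is $G$-invariant and cocompactly supported when $M/G$ is compact.
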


One of the consequences of the elliptic estimate (or Gårding's inequality) on a compact manifold is that any two elliptic, first-order differential operators define equivalent graph norms as maps $\cs \to L^2(E)$. We prove in Proposition \ref{propeqellest} that, if the action is cocompact, then equivalent graph norms on (first) Sobolev spaces imply equivalent graph norms on the level of $G$-Sobolev modules. In particular, for discrete groups acting freely, this result shall be enough for us to derive that two elliptic, first-order differential operators define equivalent graph norms as maps $\cs\to \ee^0(E)$.

In what follows, we write $\widehat{W^1}(E)$ and $\widehat{\ee^1}(E)$ to denote, respectively, the first Sobolev space and first $G$-Sobolev module defined with respect to a elliptic, first-order operator $\WHD\colon \cs\to L^2(E)$.
\begin{proposition}\label{propeqellest}
	Let  $D,\WHD\colon \cs \to L^2(E)$ be two  $G$-equivariant, symmetric, elliptic, first-order differential operators. Suppose that their graph norms are equivalent and that $M/G$ is compact. Then, there exist numbers $C_1,C_2>0$ such that
	\begin{equation*}
		C_1\|s\|_{\widehat{\ee^1}(E)}\leq \|s\|_{\ee^1(E)}\leq C_2\|s\|_{\widehat{\ee^1}(E)},
	\end{equation*}
	for every $s\in \cs $. In particular, $\ee^1(E)=\widehat{\ee^1}(E)$.
\end{proposition}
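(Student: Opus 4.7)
The plan is to bootstrap from the $L^2$ level to the $G$-Sobolev level by invoking Proposition \ref{haoguo35}. The hypothesis that $D$ and $\widehat{D}$ have equivalent graph norms on $\cs$ is precisely the statement that $D$ defines a bounded operator $\widehat{W^1}(E)\to L^2(E)$, and symmetrically that $\widehat{D}$ defines a bounded operator $W^1(E)\to L^2(E)$. Both operators are $G$-equivariant first-order differential operators, hence local, and $M/G$ is compact by hypothesis.

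Next I would apply Proposition \ref{haoguo35}, whose proof depends only on $G$-equivariance, locality, and cocompactness rather than on any particular reference operator used to build the Sobolev modules. Taking $\widehat{D}$ as the reference for $\widehat{\ee^1}(E)$, this yields $D\in\mathcal{L}(\widehat{\ee^1}(E),\ee^0(E))$, and symmetrically (with the roles of $D$ and $\widehat{D}$ reversed) $\widehat{D}\in\mathcal{L}(\ee^1(E),\ee^0(E))$. Here it is crucial that $\ee^0(E)$ is itself defined without reference to any first-order differential operator, so that the target modules on both sides coincide.

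Finally, I would combine these boundedness statements with positivity in $\CS(G)$. For $s\in\cs$, the triangle inequality in $\CS(G)$ yields
\[
\|s\|_{\ee^1(E)}^2=\bigl\|\langle s,s\rangle_{\ee^0(E)}+\langle Ds,Ds\rangle_{\ee^0(E)}\bigr\|_{\CS(G)}\leq \|s\|_{\ee^0(E)}^2+\|Ds\|_{\ee^0(E)}^2,
\]
while positivity of both summands in $\langle s,s\rangle_{\widehat{\ee^1}(E)}$ forces $\|s\|_{\ee^0(E)}\leq \|s\|_{\widehat{\ee^1}(E)}$ (since $\|a\|\leq \|a+b\|$ for positive $a,b$ in a $C^*$-algebra). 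Together with the bound $\|Ds\|_{\ee^0(E)}\leq \|D\|_{\mathcal{L}(\widehat{\ee^1}(E),\ee^0(E))}\|s\|_{\widehat{\ee^1}(E)}$ coming from the previous step, this produces the upper inequality. The lower inequality follows by the symmetric argument using $\widehat{D}\in\mathcal{L}(\ee^1(E),\ee^0(E))$, and the identification $\ee^1(E)=\widehat{\ee^1}(E)$ is then immediate from density of $\cs$.

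The main point requiring genuine care is checking that Proposition \ref{haoguo35} remains valid when the source-side Sobolev module is built from a different operator than the one used as a running reference; inspecting its proof (which rests only on the cocompact cutoff trick applied to a compactly supported positive operator) confirms that no essential use of the reference operator's identity is made.
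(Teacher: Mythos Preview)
Your proposal is correct and follows essentially the same strategy as the paper: transfer the $L^2$-level graph-norm equivalence to the module level via Proposition~\ref{haoguo35}. The paper's execution is slightly more streamlined in two respects. First, rather than applying Proposition~\ref{haoguo35} to $D$ with source $\widehat{\ee^1}(E)$ (which, as you note, requires verifying that the proof is insensitive to the reference operator), it applies the proposition to $\widehat{D}+i\colon W^1(E)\to L^2(E)$, keeping $D$ as the reference operator throughout and so using the proposition exactly as stated. Second, it exploits the identity $\|s\|_{\widehat{\ee^1}(E)}=\|(\widehat{D}+i)s\|_{\ee^0(E)}$ (coming from symmetry of $\widehat{D}$ with respect to the $\ee^0$-inner product) to obtain the bound in one line, bypassing your triangle-inequality-plus-positivity split. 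Both are cosmetic simplifications; your argument is sound.
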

\begin{proof}
	 By hypothesis, we see that $W^1(E)=\widehat{W^1}(E)$. It follows that $\WHD$ defines a bounded operator in $\mathcal{B}(W^1(E),L^2(E))$, and so the operator $\WHD+i$ is bounded $W^1(E)\to L^2(E)$. By Proposition \ref{haoguo35}, the operator $\WHD+i$ is in $\mathcal{L}(\ee^1(E),\ee^0(E))$, and there exists a constant $C_{\co}$ such that
	\begin{equation}\label{eqdhat}
		\|\WHD+i\|_{\mathcal{L}(\ee^1(E),\ee^0(E))}\leq C_{\co}\|\WHD+i\|_{\mathcal{B}(W^1(E),L^2(E))}.
	\end{equation}
	For every $s\in \cs $, 
		\begin{equation}\label{equivgraphnorms}
		\begin{split}
			\|s\|_{\widehat{\ee^1}(E)}&=\|(\WHD+i)s\|_{\ee^0(E)}\\
			&\leq \|\WHD+i\|_{\mathcal{L}(\ee^1(E),\ee^0(E))}\|s\|_{\ee^1(E)}\\
			&\leq C_{\co}\|\WHD+i\|_{\mathcal{B}(W^1(E),L^2(E))}\|s\|_{\ee^1(E)},
		\end{split}
	\end{equation}
	which proves the first inequality. The other one follows similarly.
\end{proof}

Suppose now, additionally, that $G=\Gamma$ is a discrete group and that it acts freely on $M$. We can then choose a fundamental domain $U\subset M$ for the $\Gamma$-action on $M$, that is, a relatively compact open set such that $\gamma U\cap U = \emptyset$ for every $\gamma\neq e$, and such that $M\backslash(\cup_{\gamma\in\Gamma} \gamma U)$ has measure zero.
\begin{proposition}\label{prop garding g discrete}
	Suppose that $\Gamma$ is a discrete group acting properly, isometrically, freely and cocompactly on $M$. Let $D,\WHD\colon \cs \to L^2(E)$ be two  $\Gamma$-equivariant, symmetric, elliptic, first-order differential operators. Then the graph norms of $D$ and $\widehat{D}$ are equivalent.
\end{proposition}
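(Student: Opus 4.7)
The plan is to localize the problem to a compact fundamental-domain-sized region using a $\Gamma$-equivariant partition of unity, apply the classical local elliptic estimate with a uniform constant thanks to $\Gamma$-equivariance, and then reassemble the bounds using properness of the action. Writing $\widehat{W^1}(E)$ for the first Sobolev space of $\WHD$, it suffices to produce a constant $C'>0$ with $\|\WHD s\|_{L^2(E)}^2\leq C'\|s\|_{W^1(E)}^2$ for every $s\in\cs$; the reverse inequality follows by symmetry.

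Using the cutoff function $\co\in C_c^\infty(M)$ of (\ref{eq cutoff c}), which in the discrete cocompact setting satisfies $\sum_{\gamma\in\Gamma}(\gamma\co)^2=1$, set $K:=\text{supp}(\co)$ and pick $\widetilde{\co}\in C_c^\infty(M)$ with $\widetilde{\co}\equiv 1$ on $K$. Covering $K$ by finitely many coordinate charts trivialising $E$, the classical local elliptic estimate produces a single constant $C_K>0$ such that, for every $t\in\cs$ supported in $K$,
\[
\|\WHD t\|_{L^2(E)}\leq C_K\bigl(\|Dt\|_{L^2(E)}+\|t\|_{L^2(E)}\bigr).
\]
Because $\Gamma$ acts by isometries and both $D,\WHD$ are $\Gamma$-equivariant, the same estimate holds with the \emph{same} constant $C_K$ for any $t$ supported in a translate $\gamma K$.

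For arbitrary $s\in\cs$, the partition-of-unity identity and properness of the action give
\[
\|s\|_{L^2(E)}^2=\sum_{\gamma}\|(\gamma\co) s\|_{L^2(E)}^2,\qquad \|\WHD s\|_{L^2(E)}^2=\sum_{\gamma}\|(\gamma\co)\WHD s\|_{L^2(E)}^2,
\]
where the sums are finite. I would estimate each summand via
\[
\|(\gamma\co)\WHD s\|_{L^2(E)}\leq \|\WHD((\gamma\co) s)\|_{L^2(E)}+\|[\WHD,\gamma\co]s\|_{L^2(E)},
\]
apply the local estimate to the first term since $(\gamma\co)s$ is supported in $\gamma K$, and expand $\WHD((\gamma\co)s)$ and $D((\gamma\co)s)$ by Leibniz into $(\gamma\co)Ds$ plus further commutators. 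The commutators $[D,\gamma\co]$ and $[\WHD,\gamma\co]$ are pointwise multiplications by compactly supported functions supported in $\gamma K$, and $\Gamma$-equivariance with isometric action ensures their $L^2$-operator norms agree with those of $[D,\co]$ and $[\WHD,\co]$. Using $\gamma\widetilde{\co}\equiv 1$ on $\gamma K$ to rewrite $[\WHD,\gamma\co]s=[\WHD,\gamma\co]((\gamma\widetilde{\co})s)$, each commutator term is bounded by a fixed constant times $\|(\gamma\widetilde{\co})s\|_{L^2(E)}$.

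Squaring and summing over $\gamma$ then reduces the estimate to $\|s\|_{L^2(E)}^2$, $\|Ds\|_{L^2(E)}^2$ and $\sum_\gamma\|(\gamma\widetilde{\co})s\|_{L^2(E)}^2$. Since $\widetilde{K}:=\text{supp}(\widetilde{\co})$ is compact and the action is proper, the overlap number $N:=\sup_{m\in M}\#\{\gamma\in\Gamma:\gamma\widetilde{K}\ni m\}$ is finite, producing the pointwise bound $\sum_\gamma(\gamma\widetilde{\co})^2\leq N\|\widetilde{\co}\|_\infty^2$ and hence $\sum_\gamma\|(\gamma\widetilde{\co})s\|_{L^2(E)}^2\leq N\|\widetilde{\co}\|_\infty^2\|s\|_{L^2(E)}^2$. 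Combining everything gives $\|\WHD s\|_{L^2(E)}\leq C'\|s\|_{W^1(E)}$, and exchanging the roles of $D$ and $\WHD$ gives the reverse bound. The main subtlety is keeping all constants uniform in $\gamma$ — this is exactly where the combination of $\Gamma$-equivariance, isometric action, and properness is essential.
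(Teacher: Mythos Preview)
Your argument is correct, with one cosmetic imprecision: for a first-order operator the commutator $[\WHD,\gamma\co]$ is not literally multiplication by a scalar function but rather the principal symbol of $\WHD$ evaluated at $d(\gamma\co)$, i.e.\ a compactly supported bundle endomorphism. This does not affect the proof, since all you use is that it is a zeroth-order operator supported in $\gamma K$ with $\|[\WHD,\gamma\co]\|=\|[\WHD,\co]\|$ by equivariance.

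The paper takes a somewhat different route. Rather than a smooth partition of unity and commutator bookkeeping, it exploits freeness of the action to choose a relatively compact fundamental domain $U$, decomposes $\|(D+i)s\|_{L^2(E)}^2$ as a sum of integrals over the translates $\gamma U$, and then uses $\Gamma$-equivariance and isometry of the action to transport each piece back to $U$. Gårding's inequality on the single compact set $\overline{U}$ then gives the estimate term by term, and summing recovers the global bound. The fundamental-domain decomposition avoids all commutators, at the cost of relying on freeness to obtain disjoint translates. Your partition-of-unity argument trades this for commutator estimates and a finite-overlap count, but in return never uses freeness at all---properness and cocompactness suffice---so it is in fact slightly more general than the statement requires.
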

\begin{proof}
	For every  $s\in \cs $,
	\begin{equation}\label{eq d plus i}
		\begin{split}
			\|(D+i)s\|^2_{L^2(E)}&=\int_M\|(D+i)s(m)\|_{E_m}\,d\mu(m)\\
			&=\sum_{\gamma\in\Gamma}\int_{\gamma U}\|((D+i)s)|_{\gamma U}(m)\|_{E_m}\,d\mu(m)\\
			&=\sum_{\gamma\in\Gamma}\int_{\gamma U}\|(\gamma^{-1}\cdot\gamma)((D+i)s)|_{\gamma U})(m)\|_{E_m}\,d\mu(m)\\
			&=\sum_{\gamma\in\Gamma}\int_{\gamma U}\|\gamma^{-1}(\gamma((D+i)s|_{\gamma U})(\gamma m))\|_{E_{ m}}\,d\mu(m)\\
			&=\sum_{\gamma\in\Gamma}\int_{\gamma U}\|\gamma^{-1}((D+i)\gamma(s|_{\gamma U})(\gamma m))\|_{E_{ m}}\,d\mu(m),\\
		\end{split}
	\end{equation}
	where we used $\Gamma$-equivariance of $D+i$ for the last equality. By the fact that $\Gamma$ acts via isometries on the fibers of $E$, by $\Gamma$-invariance of $d\mu$, and by writing $m'=\gamma m$, we see that (\ref{eq d plus i}) is equal to
	\begin{equation*}
		\begin{split}
			\|(D+i)s\|^2_{L^2(E)}&=\sum_{\gamma\in\Gamma}\int_{\gamma U}\|(D+i)\gamma(s|_{\gamma U})(m')\|_{E_{m'}}\,d\mu(m')\\
			&=\sum_{\gamma \in \Gamma}\|(D+i)\gamma(s|_{\gamma U})\|_{L^2(E|_{U})}.
		\end{split}
	\end{equation*}
	Here we note that the section $\gamma(s|{\gamma U})$ is supported within $U$, for every $\gamma\in \Gamma$. By using Gårding's inequality within the compact $\overline{U}$ (\cite[10.4.4]{higsonroe}), we see that there is a constant $C>0$ such that
	\begin{equation*}
		\begin{split}
			\|(D+i)s\|^2_{L^2(E)}&=\sum_{\gamma \in \Gamma}\|(D+i)\gamma(s|_{\gamma U})\|_{L^2(E|_{U})}\\
			&\leq C\cdot\sum_{\gamma \in \Gamma}\|(\widehat{D}+i)\gamma(s|_{\gamma U})\|_{L^2(E|_{U})}\\
			&=C\cdot\|(\widehat{D}+i)s\|^2_{L^2(E)},
		\end{split}
	\end{equation*}
	which finishes the proof.
\end{proof}
The following is an immediate consequence of Propositions \ref{propeqellest} and \ref{prop garding g discrete}.
\begin{corollary}\label{gardingine} 
Suppose that $\Gamma$ is a discrete group acting properly, isometrically, freely and cocompactly on $M$. Let $D,\WHD\colon \cs\to L^2(E)$ be two  $\Gamma$-equivariant, symmetric, elliptic, first-order differential operators. Then the $\Gamma$-Sobolev modules $\ee^1(E)$ and $\widehat{\ee^1}(E)$, defined respectively by $D$ and $\WHD$, are the same.
\end{corollary}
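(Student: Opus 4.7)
The plan is to chain together the two preceding propositions. Corollary \ref{gardingine} is labelled as an immediate consequence of Proposition \ref{prop garding g discrete} and Proposition \ref{propeqellest}, so the proof amounts to verifying that the hypotheses of the latter are supplied by the conclusion of the former.

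First, I would invoke Proposition \ref{prop garding g discrete}: since $\Gamma$ is a discrete group acting properly, isometrically, freely and cocompactly on $M$, and $D$ and $\widehat{D}$ are both $\Gamma$-equivariant, symmetric, elliptic, first-order differential operators on $\cs$, that proposition yields constants $C_1, C_2 > 0$ such that
\[
C_1 \|(\widehat{D}+i)s\|_{L^2(E)} \leq \|(D+i)s\|_{L^2(E)} \leq C_2 \|(\widehat{D}+i)s\|_{L^2(E)}
\]
for all $s \in \cs$. In particular, the graph norms of $D$ and $\widehat{D}$ on $\cs\subset L^2(E)$ are equivalent, so the resulting completions satisfy $W^1(E) = \widehat{W^1}(E)$ as Banach spaces.

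Next, I would apply Proposition \ref{propeqellest}: its hypotheses are precisely that $D$ and $\widehat{D}$ are $G$-equivariant, symmetric, elliptic, first-order operators with equivalent graph norms and that $M/G$ is compact, all of which hold in our setting (with $G=\Gamma$). The conclusion gives constants $C_1', C_2' > 0$ with
\[
C_1' \|s\|_{\widehat{\ee^1}(E)} \leq \|s\|_{\ee^1(E)} \leq C_2' \|s\|_{\widehat{\ee^1}(E)}
\]
for every $s\in \cs$, and therefore $\ee^1(E) = \widehat{\ee^1}(E)$ as completions of $\cs$, finishing the proof.

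There is no substantive obstacle here: the work has already been done in Proposition \ref{prop garding g discrete} (which uses Gårding's inequality fibered over a fundamental domain together with $\Gamma$-equivariance) and in Proposition \ref{propeqellest} (which transfers the equivalence of graph norms through the bound \eqref{equivgraphnorms} coming from Proposition \ref{haoguo35}). The corollary is simply the composition of these two transfers, and no additional analytic input is required.
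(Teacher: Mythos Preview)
Your proof is correct and follows exactly the paper's approach: the corollary is stated as an immediate consequence of Propositions \ref{propeqellest} and \ref{prop garding g discrete}, and you have correctly chained them together by first obtaining equivalence of the $L^2$ graph norms from Proposition \ref{prop garding g discrete} and then transferring this to the $\ee^1$ level via Proposition \ref{propeqellest}.
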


\subsection{Well-definedness of $G$-Spectral Flow}\label{sec pf SFG}

We return to the setting of Subsection \ref{sec SFG} and make Assumptions (1)--(3) listed at the start of that subsection, along with cocompactness of the $G$-action on $M$.

\begin{lemma}\label{a1lemma}
The family $\{D_x\}_{x\in X}$ consists of regular self-adjoint operators on $\ee^0(E)$, with domains $\ee^1_x(E)=:\ee^1(E)$ independent of $x\in X$. Moreover, the inclusion $\ee^1(E)\hookrightarrow \ee^0(E)$ is compact.
\end{lemma}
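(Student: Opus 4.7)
The plan is to verify the three claims of the lemma separately, relying heavily on results already established earlier in this section and on the hypotheses laid out in Subsection~\ref{sec SFG}.

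For regular self-adjointness, I would argue pointwise: for each fixed $x \in X$, the operator $D_x$ is $G$-equivariant, symmetric, elliptic, and first-order, and $M/G$ is compact. Kasparov's \cite[Theorem 5.8]{kasparov}, cited in Subsection~\ref{subsecgsobolevmod}, then yields that the closure of $D_x$ is a regular self-adjoint operator on $\ee^0(E)$ with domain exactly $\ee^1_x(E)$. Note that the cocompactness hypothesis allows us to bypass any finite propagation speed assumption, so no additional work is needed here.

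Next I would verify that $\ee^1_x(E)$ is independent of $x$. By Assumption~(1), the Banach spaces $W^1_x(E)$ coincide as vector spaces with equivalent graph norms for each $x \in X$ (relative to the fixed reference point $x_0$). In particular, $D_x$ and $D_{x_0}$ are two $G$-equivariant, symmetric, elliptic, first-order differential operators on $E \to M$ with equivalent graph norms as operators $\cs \to L^2(E)$. Proposition~\ref{propeqellest} applies to this pair (its hypotheses are precisely cocompactness plus equivalence of the $L^2$-level graph norms), giving $\ee^1_x(E) = \ee^1_{x_0}(E)$ with equivalent Hilbert $\CS(G)$-module norms. Setting $\ee^1(E) := \ee^1_{x_0}(E)$ delivers the second claim.

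Finally, compactness of the inclusion $\ee^1(E) \hookrightarrow \ee^0(E)$ is immediate from Corollary~\ref{compactincl}, which is valid since $M/G$ is compact. The only mild subtlety I foresee is being explicit that the norm equivalences across the family $\{\ee^1_x(E)\}_{x\in X}$ are pointwise in $x$ and not uniform, consistent with Remark~\ref{remark domains}; the lemma does not claim uniformity, and the issue of uniform graph-norm equivalence is deferred to Definition~\ref{def unif graph norm} and Theorem~\ref{propindtheorem}, where it is reintroduced as an explicit additional hypothesis.
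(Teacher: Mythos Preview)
Your proposal is correct and follows essentially the same approach as the paper: regular self-adjointness via \cite[Theorem 5.8]{kasparov} using cocompactness, domain independence via Assumption~(\ref{assump1}) combined with Proposition~\ref{propeqellest}, and compactness of the inclusion via Corollary~\ref{compactincl}. The paper additionally records that $D_x\in\mathcal{L}(\ee^1_x(E),\ee^0(E))$ via Proposition~\ref{adjointdirac}, but this is a minor bookkeeping point and its omission does not constitute a gap.
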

\begin{proof}
By Proposition \ref{adjointdirac} we see that $D_x\in \mathcal{L}(\ee^1_x(E),\ee^0(E))$ for every $x\in X$. Also, by \cite[Theorem 5.8]{kasparov}  each $D_x$ is a regular self-adjoint operator on $\ee^0(E)$. From Proposition \ref{propeqellest} and Assumption \ref{assump1}, we see that all domains $\ee^1_x(E)$ coincide, so from now on we may write $\ee^1_x(E)=:\ee^1(E)$ and equip $\ee^1(E)$ with the graph norm of $D_{x_0}$, for some $x_0\in X$. We thus have a family $\{D_x\colon\ee^1(E)\to \ee^0(E)\}_{x\in X}$ of regular self-adjoint operators all of which are defined on the same domain $\ee^1(E)$. Finally, we see by cocompactness of the action and Corollary \ref{compactincl} that $\ee^1(E) \hookrightarrow \ee^0(E)$ is compact, which finishes the proof.
\end{proof}

\begin{lemma}\label{lem UEGN}
	Suppose that the family $\{D_x\colon W^1(E)\to L^2(E)\}_{x\in X}$ has uniformly equivalent graph norms on $L^2(E)$. Then, the induced family $\{D_x\colon \ee^1(E)\to \ee^0(E)\}_{x\in X}$ has uniformly equivalent graph norms on $\ee^0(E)$.
\end{lemma}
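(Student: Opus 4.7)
The plan is to apply Proposition \ref{haoguo35} twice in a symmetric fashion: once to obtain the upper uniform bound for $\{D_x+i\}$ on $\ee^0(E)$ using $D_{x_0}$ as the reference operator, and once to obtain the lower uniform bound by reversing the roles of $D_x$ and $D_{x_0}$. Let $C_1,C_2>0$ be the constants from Definition \ref{def unif graph norm} applied to the $L^2$-graph norms. The key observation is that both $D_x+i$ and $D_{x_0}+i$ are $G$-equivariant and local (being first-order differential operators plus scalars), so Proposition \ref{haoguo35} applies to each of them between the appropriate $G$-Sobolev modules.

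First I will establish the upper bound. By the hypothesis, $\|(D_x+i)v\|_{L^2(E)}\le C_2\|(D_{x_0}+i)v\|_{L^2(E)}$, so $D_x+i\colon W^1(E)\to L^2(E)$ has norm at most $C_2$ uniformly in $x$ (with $W^1(E)$ carrying the graph norm of $D_{x_0}$). Proposition \ref{haoguo35} then gives $\|D_x+i\|_{\mathcal{L}(\ee^1(E),\ee^0(E))}\le C_{\co} C_2$ uniformly. Combined with $\|v\|_{\ee^1(E)}=\|(D_{x_0}+i)v\|_{\ee^0(E)}$ (by symmetry of $D_{x_0}$ on $\ee^0(E)$), this yields
\[
\|(D_x+i)v\|_{\ee^0(E)}\le C_{\co}C_2\,\|(D_{x_0}+i)v\|_{\ee^0(E)}.
\]
Next I will establish the lower bound by the symmetric argument. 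The reverse inequality $\|(D_{x_0}+i)v\|_{L^2(E)}\le C_1^{-1}\|(D_x+i)v\|_{L^2(E)}$ means $D_{x_0}+i\colon W^1_x(E)\to L^2(E)$ has norm at most $1/C_1$ uniformly in $x$. Applying Proposition \ref{haoguo35} now with $D_x$ playing the role of the reference operator for the $G$-Sobolev structure gives $\|D_{x_0}+i\|_{\mathcal{L}(\ee^1_x(E),\ee^0(E))}\le C_{\co}/C_1$; combined with $\|v\|_{\ee^1_x(E)}=\|(D_x+i)v\|_{\ee^0(E)}$, this yields $(C_1/C_{\co})\|(D_{x_0}+i)v\|_{\ee^0(E)}\le\|(D_x+i)v\|_{\ee^0(E)}$. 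Together with the upper bound, this gives uniform equivalence of the $\ee^0$-graph norms with constants $\tilde C_1=C_1/C_{\co}$ and $\tilde C_2=C_{\co}C_2$.

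The step that requires the most care is justifying that Proposition \ref{haoguo35} applies with $D_x$ as reference operator, not just the fixed $D$ from Subsection \ref{subsecadjointpos}. This is valid because inspecting the proof of Proposition \ref{haoguo35} (and of the underlying Lemma \ref{haoguo37}) reveals that the constant $C_{\co}$ and the construction of the auxiliary $L^2$-positive, compactly supported operator $A_2$ depend only on the cutoff function $\co$, not on the particular elliptic operator used to set up the Sobolev module norms. A naive alternative plan, namely bounding $D_x-D_{x_0}\colon\ee^1(E)\to\ee^0(E)$ by a uniform constant $K$ via Proposition \ref{haoguo35} and invoking the triangle inequality, easily delivers the upper bound but yields only $(1-K)\|(D_{x_0}+i)v\|_{\ee^0(E)}\le\|(D_x+i)v\|_{\ee^0(E)}$ for the lower bound, which is useless without additional control ensuring $K<1$; the symmetric application of Proposition \ref{haoguo35} with swapped reference avoids this obstruction entirely.
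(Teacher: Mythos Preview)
Your proof is correct and follows essentially the same approach as the paper's: both apply Proposition \ref{haoguo35} twice, once with $D_{x_0}$ as the reference operator defining the $G$-Sobolev norm and once with $D_x$ in that role, using the uniform $L^2$-graph-norm bounds to control the operator norms uniformly in $x$. The paper presents only one direction explicitly (your lower bound) and leaves the other as ``similarly''; your observation that $C_{\co}$ depends only on the cutoff function and not on the particular elliptic operator defining the Sobolev module is exactly the point that makes the symmetric application legitimate, and is used implicitly in the paper's argument as well.
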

\begin{proof}
	Let $C_1,C_2>0$ be such that, for every $\eta\in W^1(E)$ and $x\in X$,
	\begin{equation}\label{equivgn}
		C_1\|(D_{x_0}+i)\eta\|_{L^2(E)}\leq \|(D_{x}+i)\eta\|_{L^2(E)}\leq C_2\|(D_{x_0}+i)\eta\|_{L^2(E)}.
	\end{equation}
		 Let $s\in \cs$ and note that, 
	by the left hand-side of (\ref{equivgn}),
	\begin{equation*}
		\|(D_{x_0}+i)\|_{\mathcal{B}(W^1_x(E),L^2(E))}\leq C_1^{-1}.
	\end{equation*}	
 It then follows that 
	\begin{equation*}
	\begin{split}
		\|s\|_{\ee^1_{x_0}(E)}&= \|(D_{x_0}+i)s\|_{\ee^0(E)}\\
		&\leq C_{\co}\cdot\|(D_{x_0}+i)\|_{\mathcal{B}(W^1_x(E),L^2(E))}\|s\|_{\ee^1_x(E)}\\
		&\leq C_{\co}\cdot C_1^{-1}\|s\|_{\ee^1_x(E)},
		\end{split}
	\end{equation*}
	where $C_{\co}$ comes from Proposition \ref{haoguo35}. The other inequality is proved similarly.
\end{proof}

\begin{lemma}\label{a2lemma}
	The map $D\colon X\to \mathcal{L}(\ee^1(E),\ee^0(E))$ is norm-continuous.
\end{lemma}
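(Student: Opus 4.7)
The plan is to reduce the norm-continuity on the $G$-Sobolev module level to the already-assumed norm-continuity on the Sobolev space level, via Proposition \ref{haoguo35}.

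First, I would observe that for any $x,y \in X$, the difference $D_x - D_y\colon \cs \to \cs$ is a $G$-equivariant differential operator of order at most one (both $D_x$ and $D_y$ are $G$-equivariant and first-order). In particular, it is local in the sense of Proposition \ref{haoguo35}, and by Assumption \ref{assump2} it extends to a bounded operator in $\mathcal{B}(W^1(E),L^2(E))$, where $W^1(E)$ is equipped with the graph norm of the fixed operator $D_{x_0}$.

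Next, since $M/G$ is compact, I would invoke Proposition \ref{haoguo35} applied to $A = D_x - D_y$ with $i=1$ and $j=0$. This yields that $D_x - D_y$ extends to an adjointable operator in $\mathcal{L}(\ee^1(E),\ee^0(E))$ (where $\ee^1(E)$ carries the graph norm of $D_{x_0}$, as fixed in Lemma \ref{a1lemma}), and moreover
\begin{equation*}
\|D_x - D_y\|_{\mathcal{L}(\ee^1(E),\ee^0(E))} \leq C_{\co}\,\|D_x - D_y\|_{\mathcal{B}(W^1(E),L^2(E))},
\end{equation*}
with $C_{\co}$ depending only on the cutoff function $\co$, and in particular independent of $x,y$.

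Finally, given $\varepsilon>0$ and $x\in X$, Assumption \ref{assump2} provides a neighbourhood of $x$ on which $\|D_x - D_y\|_{\mathcal{B}(W^1(E),L^2(E))} < \varepsilon/C_{\co}$; combining this with the previous inequality gives $\|D_x - D_y\|_{\mathcal{L}(\ee^1(E),\ee^0(E))} < \varepsilon$, establishing the norm-continuity of $D\colon X\to \mathcal{L}(\ee^1(E),\ee^0(E))$. The only real content is the transfer of boundedness from $\mathcal{B}(W^1(E),L^2(E))$ to $\mathcal{L}(\ee^1(E),\ee^0(E))$, and Proposition \ref{haoguo35} does this with a uniform constant, so there is no obstacle beyond assembling these pieces.
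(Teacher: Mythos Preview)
Your proof is correct and follows exactly the same approach as the paper: apply Proposition~\ref{haoguo35} to the local $G$-equivariant operator $D_x - D_y$ to obtain the uniform bound $\|D_x-D_y\|_{\mathcal{L}(\ee^1(E),\ee^0(E))}\leq C_{\co}\,\|D_x-D_y\|_{\mathcal{B}(W^1(E),L^2(E))}$, and then invoke Assumption~\ref{assump2}. The paper's proof is simply a more terse version of what you wrote.
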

\begin{proof}
Let $C_{\co}$ be the constant in Proposition \ref{haoguo35}. The result follows by noting that
\begin{equation*}
	\|D_x-D_y\|_{\mathcal{L}(\ee^1(E),\ee^0(E))}\leq C_{\co}\cdot \|D_x-D_y\|_{\mathcal{B}(W^1(E),L^2(E))},
\end{equation*}
for every $x,y\in X$.
	\end{proof} 
	\begin{lemma}\label{a3lemma}
		The operator $D_x$ is invertible on $\ee^0(E)$ for every $x\in X\backslash K$, and $\sup_{x\in X\backslash K}\|D_x^{-1}\|_{\ee^0(E)}<\infty$.
	\end{lemma}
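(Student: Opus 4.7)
The plan is to use the Weitzenböck-type decomposition from Assumption \ref{assump3} to establish a uniform positivity bound $D_x^2 \geq c$ on $\ee^0(E)$ for all $x \in X \setminus K$, and then apply functional calculus for regular self-adjoint operators on Hilbert modules to conclude invertibility with a uniform norm bound on the inverse.

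First, fix $x \in X \setminus K$ and $s \in \cs$. I would pair the identity $D_x^2 s = L_x^* L_x s + \kappa_x s$ with $s$ in the $\ee^0(E)$-inner product. Using $G$-equivariance of $L_x$ (so that $L_x$ commutes with the $G$-action under the $\ee^0$-pairing) together with formal self-adjointness of $L_x^* L_x$ on compactly supported smooth sections, one checks that for each $g \in G$,
\begin{equation*}
\langle L_x^* L_x s, s \rangle_{\ee^0(E)}(g) = \int_M (L_x s(m), (g \cdot L_x s)(m))_{E'_m}\, d\mu(m) = \langle L_x s, L_x s \rangle_{\ee^0(E')}(g),
\end{equation*}
which is a positive element of $\CS(G)$. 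Meanwhile, Lemma \ref{positivitynoncocomp} applied to $\kappa_x \in \End(E)^G$ gives $\langle \kappa_x s, s \rangle_{\ee^0(E)} \geq c \langle s, s \rangle_{\ee^0(E)}$ in $\CS(G)$. Adding the two contributions yields
\begin{equation*}
\langle D_x^2 s, s \rangle_{\ee^0(E)} \geq c \langle s, s \rangle_{\ee^0(E)}
\end{equation*}
for every $s \in \cs$, and this extends by density to all $s \in \ee^1(E) = \Dom(D_x)$.

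By Lemma \ref{a1lemma}, the operator $D_x$ is regular self-adjoint on $\ee^0(E)$, and so is $D_x^2$. The estimate above says $D_x^2 \geq c$ as regular self-adjoint operators on $\ee^0(E)$; by the continuous functional calculus for regular self-adjoint operators (\cite[Theorem 10.9]{lance}), the spectrum of $D_x^2$ lies in $[c, \infty)$, hence the spectrum of $D_x$ is contained in $(-\infty, -\sqrt{c}] \cup [\sqrt{c}, \infty)$. Thus $D_x$ is invertible on $\ee^0(E)$, and $\|D_x^{-1}\|_{\mathcal{L}(\ee^0(E))} \leq 1/\sqrt{c}$. Since the constant $c$ in Assumption \ref{assump3} is uniform over $x \in X\setminus K$, we obtain $\sup_{x \in X \setminus K} \|D_x^{-1}\|_{\ee^0(E)} \leq 1/\sqrt{c} < \infty$.

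I expect the main point requiring care to be the first step, namely the verification that $\langle L_x^* L_x s, s \rangle_{\ee^0(E)}$ coincides with $\langle L_x s, L_x s \rangle_{\ee^0(E')}$ as elements of $\CS(G)$. Here $L_x^*$ is the formal differential-operator adjoint, so the identity is a genuine integration-by-parts statement on $M$, and its compatibility with the $G$-variable (so that one gets a $\CS(G)$-valued identity, not just a pointwise equality of numbers) hinges on $L_x$ being $G$-equivariant. Once this is granted, positivity is immediate from the general fact that $\langle \xi, \xi\rangle \geq 0$ in any Hilbert $\CS(G)$-module, and the remainder of the argument is a standard functional-calculus consequence.
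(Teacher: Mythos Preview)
Your proof is correct and follows essentially the same route as the paper: both establish $D_x^2 \geq c$ on $\ee^0(E)$ by combining Lemma~\ref{positivitynoncocomp} for the endomorphism term with the $G$-equivariance computation $\langle L_x^*L_x s, s\rangle_{\ee^0(E)} = \langle L_x s, L_x s\rangle_{\ee^0(E')} \geq 0$, and then invoke a spectral/functional-calculus argument to obtain invertibility and the uniform bound $\|D_x^{-1}\| \leq 1/\sqrt{c}$. The only cosmetic difference is that the paper cites \cite[Proposition~1.21]{ebert} for the last step where you cite \cite[Theorem~10.9]{lance}.
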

\begin{proof}
By Assumption \ref{assump3} the operator $\kappa_x-c\in \End^G(E)$ is fiberwise positive for each $x\in X\backslash K$, so by Lemma \ref{positivitynoncocomp} it follows that $\kappa_x-c$ defines a positive operator on $\ee^0(E)$. We note that $L_x^*L_x$ is automatically positive in $\ee^0(E)$, since for every $s\in \cs$ 
\begin{equation*}
		\langle (L_x^*L_x) s,s\rangle_{\ee^0(E)}=\langle L_xs,L_xs\rangle_{\ee^0(E')}\geq 0
\end{equation*}
on $\CS(G)$, by $G$-equivariance of the operator $L_x\colon \cs\to \Gamma_c^{\infty}(E')$. It follows that $D_x^2=L_x^*L_x+\kappa_x\geq c > 0$ on $\ee^0(E)$. By \cite[Proposition 1.21]{ebert} we conclude that $D_x^2$ is invertible and 
	\begin{equation*}
		\displaystyle\sup_{x\in X/K} \|D_x^{-1}\|_{\mathcal{L}(\ee^0(E))}\leq 1/\sqrt{c}<\infty.
	\end{equation*}
\end{proof}

\begin{proposition}\label{koen32}
The family $\{D_x\}_{x \in X}$ defines a regular self-adjoint operator $\DD$ on the Hilbert $C_0(X,\CS(G))$-module $C_0(X,\ee^0(E))$ given by (\ref{eqopdd}) on the initial domain $C_c(X,\ee^1(E))$. Furthermore, if the graph norms of $\DF$ are uniformly equivalent, then the closure of $\DD$ is regular self-adjoint on the domain $C_0(X,\ee^1(E))$.
\end{proposition}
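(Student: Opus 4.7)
The plan is to build the pointwise resolvents of the family and then use them as two-sided inverses for $\DD \pm i$. By Lemma \ref{a1lemma} each $D_x$ is regular self-adjoint on $\ee^0(E)$, so $(D_x\pm i)^{-1} \in \mathcal{L}(\ee^0(E))$ has norm at most $1$. I would therefore define
\[
R_\pm \colon C_0(X,\ee^0(E)) \to C_0(X,\ee^0(E)),\qquad (R_\pm\eta)(x):=(D_x\pm i)^{-1}\eta(x),
\]
observe that $\|(R_\pm\eta)(x)\|_{\ee^0(E)}\leq \|\eta(x)\|_{\ee^0(E)}$ so that $R_\pm\eta$ automatically vanishes at infinity, and prove continuity of $x\mapsto (R_\pm\eta)(x)$ via the resolvent identity
\[
(D_x\pm i)^{-1}-(D_{x_0}\pm i)^{-1}=(D_x\pm i)^{-1}(D_{x_0}-D_x)(D_{x_0}\pm i)^{-1},
\]
using norm-continuity of $x\mapsto D_x$ in $\mathcal{L}(\ee^1(E),\ee^0(E))$ from Lemma \ref{a2lemma} and the fact that $(D_{x_0}\pm i)^{-1}\eta(x_0)\in\ee^1(E)$. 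This places $R_\pm$ in $\mathcal{L}(C_0(X,\ee^0(E)))$ with mutually adjoint pair $(R_+)^*=R_-$.

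Writing $\DD_{\max}$ for the operator $\DD$ on the domain \eqref{eq dom D dot}, I would next verify that $R_\pm$ takes values in $\Dom(\DD_{\max})$ and inverts $\DD_{\max}\pm i$: for $\xi\in C_0(X,\ee^0(E))$ and $\eta:=R_+\xi$, one has $\eta(x)\in\ee^1(E)$ and the identity $D_x\eta(x)=\xi(x)-i\eta(x)$ exhibits $x\mapsto D_x\eta(x)$ as an element of $C_0(X,\ee^0(E))$. Symmetry of $\DD_{\max}$ is inherited pointwise from the self-adjointness of each $D_x$ and the pointwise nature of the $C_0(X,\CS(G))$-valued inner product, and surjectivity of $\DD_{\max}\pm i$ then forces $\DD_{\max}$ to be regular self-adjoint via the Hilbert module criterion \cite[Lemma 9.8]{lance}. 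To show that $\DD_{\max}$ is the closure of $\DD$ on the initial domain $C_c(X,\ee^1(E))$, I would approximate any $\eta\in\Dom(\DD_{\max})$ first by writing $\eta=R_+\xi$ with $\xi$ ranging over a dense subspace of $C_0(X,\ee^0(E))$ consisting of compactly supported, continuous, $\ee^1(E)$-valued sections, and then by truncating in $x$ via cutoffs $\chi_n\in C_c(X)$, $\chi_n\to 1$. Since $\chi_n$ commutes pointwise with each $D_x$, convergence $\chi_n\eta\to \eta$ in the graph norm of $\DD_{\max}$ is immediate.

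For the second assertion, under uniform equivalence of the graph norms the promoted equivalence of Lemma \ref{lem UEGN} shows that $\|\eta(x)\|_{\ee^1(E)}$ is uniformly comparable in $x$ to $\|(D_x+i)\eta(x)\|_{\ee^0(E)}$, so membership of $\eta\in\Dom(\DD_{\max})$ in $C_0(X,\ee^1(E))$ reduces to continuity of $x\mapsto\eta(x)$ into $\ee^1(E)$. I would obtain this from the decomposition
\[
D_{x_0}\eta(x)-D_{x_0}\eta(x_0)=(D_{x_0}-D_x)\eta(x)+\bigl(D_x\eta(x)-D_{x_0}\eta(x_0)\bigr),
\]
using local boundedness of $\|\eta(x)\|_{\ee^1(E)}$ together with Lemma \ref{a2lemma} to control the first term, and the defining continuity of $x\mapsto D_x\eta(x)$ for the second. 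The main obstacle is the approximation step in the first part: without uniform graph-norm equivalence, the truncations $\chi_n\eta$ need not lie in $C_c(X,\ee^1(E))$ for a general $\eta\in\Dom(\DD_{\max})$, since continuity into $\ee^1(E)$ is not automatic. Pre-approximating $\xi$ on the resolvent side before applying the $x$-cutoff circumvents this, because for such $\xi$ one has explicit pointwise control of $(D_x+i)^{-1}\xi(x)$ in $\ee^1(E)$ through the identity $D_x(D_x+i)^{-1}\xi(x)=\xi(x)-i(D_x+i)^{-1}\xi(x)$.
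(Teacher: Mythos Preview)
Your approach is correct in outline and is essentially an unpacking of what the paper does by citation: the paper's proof simply verifies that the family satisfies conditions (a1)--(a3) of \cite[\S3.1]{koen} via Lemmas~\ref{a1lemma}--\ref{a3lemma} and then invokes \cite[Lemma~3.2]{koen}. Your direct construction of the pointwise resolvents $R_\pm$ and the appeal to \cite[Lemma~9.8]{lance} is precisely how that cited lemma is proved, so you have reconstructed rather than replaced the argument. (Note that Lemma~\ref{a3lemma} plays no role in your proof, and indeed invertibility outside $K$ is not needed for regular self-adjointness; it enters only for Fredholmness in the subsequent proposition.)

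There is, however, a genuine soft spot in your core argument. To conclude that $C_c(X,\ee^1(E))$ is a core, you need $R_+\xi\in C_c(X,\ee^1(E))$ whenever $\xi$ has compact support, i.e.\ that $x\mapsto (D_x+i)^{-1}\xi(x)$ is continuous into $\ee^1(E)$, where $\ee^1(E)$ carries the graph norm of the \emph{fixed} operator $D_{x_0}$. Your identity $D_x(D_x+i)^{-1}\xi(x)=\xi(x)-i(D_x+i)^{-1}\xi(x)$ controls the $D_x$-graph norm, not the $D_{x_0}$-graph norm, and choosing $\xi$ with values in $\ee^1(E)$ does not by itself bridge this. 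What is missing is the observation that norm-continuity of $x\mapsto D_x$ in $\mathcal{L}(\ee^1(E),\ee^0(E))$ forces the graph norms to be \emph{locally uniformly} equivalent: if $\|D_{x_0}-D_x\|_{\mathcal{L}(\ee^1,\ee^0)}<\tfrac12$ then $\|(D_{x_0}+i)v\|\leq \|(D_x+i)v\|+\tfrac12\|(D_{x_0}+i)v\|$, and a finite covering argument extends this over any compact set. With this in hand, the resolvent identity gives continuity of $x\mapsto (D_x+i)^{-1}$ in $\mathcal{L}(\ee^0(E),\ee^1(E))$ on compacta, which is exactly what you need. Once this is noted, your argument goes through cleanly; without it, the final sentence of your proposal does not actually circumvent the obstacle you correctly identified.
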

\begin{proof}
By Lemmas \ref{a1lemma}, \ref{a2lemma} and \ref{a3lemma}, the family $\{D_x\colon \ee^1(E)\to \ee^0(E)\}_{x\in X}$ satisfies conditions (a1)-(a3) in \cite[\S3.1]{koen}. Hence the claim follows by \cite[Lemma 3.2]{koen}.
\end{proof}

Proposition \ref{prop D dot Fred} now follows from \cite[Proposition 3.4]{koen}. Indeed, by \cite[Lemma 3.3]{koen} the operator $\DD$ has locally compact resolvents. Then, picking a compactly supported function $f\in C_c(X)$ with $f|_K=1$, one may directly check that
		\begin{equation*}
		Q:=(\DD-i)^{-1}f+\DD^{-1}(1-f)
	\end{equation*} 
	defines a parametrix for $\DD$.

The class (\ref{eqclass}) also has a description in terms of unbounded $K\!K$-theory, which shall be useful for later (cf. \cite[Lemma 3.7]{koen} and \cite[Proposition 8.7]{kl1}).
\begin{proposition}\label{a123}
	  Let $\psi\in C_0(X)$ be a strictly positive function vanishing at infinity such that $\psi|_K=1$. Then the operator $\DD'$ determined by the family $\{D'_x:=\psi^{-1}(x)D_x\}_{x\in X}$ defines an odd unbounded Kasparov $(\com,C_0(X,\CS(G))$-cycle $(C_0(X,\ee^0(E)),\DD',1_{\com})$.
\end{proposition}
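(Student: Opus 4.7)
The plan is to verify the two defining properties of an odd unbounded Kasparov cycle: regular self-adjointness of $\DD'$ on the Hilbert $C_0(X,\CS(G))$-module $C_0(X,\ee^0(E))$, and compactness of the resolvent $(1+(\DD')^2)^{-1}$ in $\mathcal{K}(C_0(X,\ee^0(E)))$. The left action $\com\to \mathcal{L}(C_0(X,\ee^0(E)))$ is the identity, so no further boundedness or commutator conditions are required.

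For regular self-adjointness, I would simply apply Proposition \ref{koen32} to the rescaled family $\{D'_x := \psi(x)^{-1}D_x\}_{x\in X}$ and verify that it inherits Assumptions \ref{assump1}--\ref{assump3}. The domains $W^1_x(E)$ are unchanged by multiplication by the positive scalar $\psi(x)^{-1}$, so Assumption \ref{assump1} is immediate. For Assumption \ref{assump2}, norm continuity of $x\mapsto D'_x$ as a map $X\to \mathcal{B}(W^1(E),L^2(E))$ follows from continuity of $\psi^{-1}$ together with norm continuity of $D$. For Assumption \ref{assump3}, the same compact set $K$ works: since $\psi|_K=1$, for $x\in X\backslash K$ the operator $D'_x$ is invertible with
\[
\|(D'_x)^{-1}\|_{\ee^0(E)} = \psi(x)\|D_x^{-1}\|_{\ee^0(E)}\leq \|\psi\|_{\infty}\cdot \sup_{y\in X\backslash K}\|D_y^{-1}\|_{\ee^0(E)},
\]
which is finite by Lemma \ref{a3lemma}.

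For the compactness of the resolvent, I would use the identification $\mathcal{K}(C_0(X,\ee^0(E)))\simeq C_0(X,\mathcal{K}(\ee^0(E)))$ and check that $x\mapsto (1+(D'_x)^2)^{-1}$ defines an element of the right-hand side. Pointwise compactness follows from Lemma \ref{a1lemma}: the compact inclusion $\ee^1(E)\hookrightarrow \ee^0(E)$ together with regular self-adjointness of each $D_x$ implies each $D_x$ has compact resolvents on $\ee^0(E)$, and applying the functional calculus to $f_x(t)=\psi(x)^2(\psi(x)^2+t^2)^{-1}\in C_0(\re)$ gives $(1+(D'_x)^2)^{-1} = f_x(D_x)\in \mathcal{K}(\ee^0(E))$. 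Norm continuity of $x\mapsto (1+(D'_x)^2)^{-1}$ follows from the resolvent identity together with the norm continuity of the family. Decay at infinity crucially uses that $\psi\in C_0(X)$: for $x\in X\backslash K$, the spectral bound $D_x^2\geq c$ on $\ee^0(E)$ gives
\[
\|(1+(D'_x)^2)^{-1}\|_{\ee^0(E)} = \|\psi(x)^2(\psi(x)^2+D_x^2)^{-1}\|_{\ee^0(E)} \leq \psi(x)^2/c,
\]
which tends to zero as $x\to\infty$.

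The main obstacle is the interplay between the two properties of $\psi$: strict positivity is needed to ensure $\DD'$ is an honest regular self-adjoint operator to which Proposition \ref{koen32} applies, while vanishing at infinity is precisely what upgrades the merely local compactness of the resolvent of $\DD$ (established in the proof of Proposition \ref{prop D dot Fred}) to the global compactness of the resolvent of $\DD'$ required here. An alternative route, closer in spirit to \cite[Lemma 3.7]{koen} and \cite[Proposition 8.7]{kl1}, would work with the bounded transform of $\DD'$ directly via a parametrix formula analogous to the one used for $\DD$, but the functional-calculus argument outlined above seems the most transparent.
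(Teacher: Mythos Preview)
Your proposal is correct and follows essentially the same route as the paper. Both arguments obtain regular self-adjointness by observing that the rescaled family $\{D'_x\}$ still satisfies the hypotheses feeding into Proposition \ref{koen32}, and both obtain compact resolvents by factoring out the decay of $\psi$: the paper writes $(\DD'\pm i)^{-1}=\psi\cdot(\DD\pm i\psi)^{-1}$ and shows the second factor lies in $C_b(X,\mathcal{K}(\ee^0(E)))$ via Lemma \ref{a3lemma}, while you work with $(1+(\DD')^2)^{-1}=\psi^2(\psi^2+\DD^2)^{-1}$ and estimate the norm directly by $\psi(x)^2/c$ outside $K$. One minor remark: in your verification of Assumption \ref{assump3} for $D'_x$ you actually check the \emph{conclusion} of Lemma \ref{a3lemma} (uniform invertibility outside $K$) rather than the Lichnerowicz-type identity itself; this is harmless since that conclusion is what Proposition \ref{koen32} really uses, and in any case the identity $(D'_x)^2=(\psi^{-1}L_x)^*(\psi^{-1}L_x)+\psi^{-2}\kappa_x$ with $\psi^{-2}\kappa_x\geq c\|\psi\|_\infty^{-2}$ also verifies Assumption \ref{assump3} literally.
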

\begin{proof}
	 Since the family $\{D_x'\}_{x\in X}$ also satisfies Assumptions \ref{assump1}-\ref{assump3}, we see by Proposition \ref{koen32} that the operator $\DD'$ is regular self-adjoint on $C_0(X,\ee^0(E))$. Since the inclusion $\ee^1(E)\hookrightarrow\ee^0(E)$ is compact, the operator $(D_x\pm i\cdot\psi(x))^{-1}$ is compact for every $x\in X$, so that $(\DD\pm i\cdot\psi)^{-1}\in C(X,\mathcal{K}(\ee^0(E)))$. In fact, by Lemma \ref{a3lemma} one has $(\DD\pm i\cdot \psi)^{-1}\in C_b(X,\mathcal{K}(\ee^0(E)))$. It then follows that $(\DD'\pm i)^{-1}=\psi\cdot(\DD\pm i\cdot\psi)^{-1}\in C_0(X,\mathcal{K}(\ee^0(E)))\simeq \mathcal{K}(C_0(X,\ee^0(E)))$, so that $\DD'$ has compact resolvents.
\end{proof}

	\begin{proof}[Proof of Proposition \ref{prop k equals 0}]
 The endomorphism $\kappa_x-c$ is fibrewise positive and, hence, by Lemma \ref{positivitynoncocomp}, we have $\kappa_x-c\geq 0$ on $\ee^0(E)$, for each $x
 \in X$. By the same argument as in the proof of Lemma \ref{a3lemma}, we see that the operator $D_x^2\geq c>0$ is positive on $\ee^0(E)$ and, thus, so is $\DD^2\geq c>0$ on $C_0(X,\ee^0(E))$. It follows from \cite[Proposition 1.21]{ebert} that $\DD$ is an invertible operator and, hence, the class $[D_\bullet]=\SF_G(\DD)\in KK^1(\com,C_0(X,\CS(G))$ vanishes.
 \end{proof}

\subsection{Integrating $G$-Spectral Flow}\label{subsecintgspecflow}

We note that for each fixed $s\in\cs$ and $m\in M$, the map $g\mapsto g(s)(m)\in E_m$ has compact support, by properness of the action. We can thus define the $G$-average of a section $s\in\cs$ by 
\begin{equation*}
    \Phi(s)(m):=\int_G g(s)(m)\,dg\in E_m,
\end{equation*}
for each $m\in M$. We define a map $\mathcal{F}_0\colon \cs\to L^2_T(E)^G$ by setting
\begin{equation*}
    \mathcal{F}_0(s)=\co\Phi(s)\in \co\gi.
\end{equation*}
We define the Hilbert module $\ee^0(E)$ with respect to the maximal group $C^*$-algebra $\CS_{\text{max}}(G)$, and consider the balanced tensor product $\ee^0(E)\otimes_{\phi}\com$, which we shall denote from now on by $\ee_{\phi}^0(E)$. We perceive it as the quotient of the vector space $\ee^0(E)$ by the subspace $N_{\phi}:=\{e\in \ee^0(E): \langle e,e \rangle_{\phi}=0\}$, which is then equipped with the bilinear product

\begin{equation}\label{phiip}
	\langle e_1,e_2\rangle_{\phi}:=\phi(\langle e_1,e_2\rangle_{\ee^0(E)}),
\end{equation}  
defined for each $e_1,e_2\in\ee^0(E)$. The Hilbert space $\ee^0_\phi(E)$ is the completion of this quotient with respect to the norm
\begin{equation*}
	\|e\|^2_{\phi}:=\langle e,e\rangle_{\phi},
\end{equation*}
for each $e\in \ee^0(E)$.

\begin{lemma}\label{f0 unitary}
	For every $s_1,s_2\in\cs$, there holds
	\begin{equation}\label{eq f0 unitary}
		\langle \mathcal{F}_0(s_1),\mathcal{F}_0(s_2)\rangle_{L^2_T(E)^G}=\langle s_1,s_2\rangle_\phi.
	\end{equation}
	\end{lemma}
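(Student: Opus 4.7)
The plan is to unfold both sides of \eqref{eq f0 unitary} and show they agree by a direct Fubini-type computation, with the cut-off property of $\co$ playing the central role and unimodularity of $G$ needed for two Haar-measure changes of variable.

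First I would expand the right-hand side: by definition of $\phi$ on $C_c(G)$ and of the $\ee^0(E)$-inner product,
\[
\langle s_1,s_2\rangle_{\phi}
= \int_G \langle s_1,g(s_2)\rangle_{L^2(E)}\,dg
= \int_G \int_M \bigl(s_1(m),g(s_2)(m)\bigr)_{E_m}\,d\mu(m)\,dg.
\]
For the left-hand side, I would write out $\mathcal{F}_0(s_i)=\co\,\Phi(s_i)$ and use the $L^2$ inner product on $\co\gi$ (justified by compact support of $\co$) to get
\[
\langle \mathcal{F}_0(s_1),\mathcal{F}_0(s_2)\rangle_{L^2_T(E)^G}
= \int_M \co(m)^2 \int_G\!\int_G \bigl(g_1(s_1)(m),g_2(s_2)(m)\bigr)_{E_m}\,dg_1\,dg_2\,d\mu(m),
\]
which is legitimate by compact support and properness (so the integrand in $G\times G$ is supported on a compact set for each $m\in\Supp\co$).

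Next, I would exploit the fact that $G$ acts on fibres by isometries. After substituting $m'=g_1^{-1}m$ (using $G$-invariance of $d\mu$) and applying $g_1^{-1}$ to the fibre pairing, the integrand becomes
\[
\co(g_1 m')^2 \bigl(s_1(m'),(g_1^{-1}g_2)(s_2)(m')\bigr)_{E_{m'}}.
\]
Then I would change variables $h:=g_1^{-1}g_2$ in the inner Haar integral; by left-invariance of $dg_2$ this replaces $dg_2$ by $dh$. Rearranging by Fubini (legitimate since all integrands are compactly supported for fixed variables) yields
\[
\int_G \!\int_M \Bigl(\int_G \co(g_1 m')^2\,dg_1\Bigr)\bigl(s_1(m'),h(s_2)(m')\bigr)_{E_{m'}}\,d\mu(m')\,dh.
\]

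The final step is to use unimodularity of $G$: the substitution $g_1\mapsto g_1^{-1}$ preserves $dg_1$, so
\[
\int_G \co(g_1 m')^2\,dg_1 \;=\; \int_G \co(g_1^{-1}m')^2\,dg_1 \;=\; 1
\]
by the defining property \eqref{eq cutoff c} of $\co$. Plugging this back gives exactly
\[
\int_G\langle s_1,h(s_2)\rangle_{L^2(E)}\,dh
=\phi\bigl(\langle s_1,s_2\rangle_{\ee^0(E)}\bigr)
=\langle s_1,s_2\rangle_\phi,
\]
as required. No genuine obstacle is expected; the only careful bookkeeping concerns the order of integration and the use of unimodularity to identify the integral of $\co(g_1 m')^2$ with $1$, since the cut-off identity \eqref{eq cutoff c} is naturally phrased in terms of $\co(g^{-1}m)^2$.
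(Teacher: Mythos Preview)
Your proof is correct and follows essentially the same approach as the paper's: both expand the left-hand side as a triple integral, use that $G$ acts isometrically on fibres, substitute $h=g_1^{-1}g_2$ and $m'=g_1^{-1}m$, and then integrate out $g_1$ via the cutoff identity \eqref{eq cutoff c} together with unimodularity. Your write-up is slightly more explicit about where unimodularity enters (the flip $g_1\mapsto g_1^{-1}$ in $\int_G \co(g_1 m')^2\,dg_1$), but otherwise the arguments are the same.
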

\begin{proof}
By direct calculation we see that
\begin{equation}\label{eq1}
    \begin{split}
        \langle \mathcal{F}_0(s_1),\mathcal{F}_0(s_2)\rangle_{L^2_T(E)^G}
        &=\int_{M}\co(m)^2\int_G\int_G\langle g_1(s_1)(m),g_2(s_2)(m)\rangle_{E_m}\,dg_1\,dg_2\, d\mu(m)\\
        &=\int_{M}\co(m)^2\int_G\int_G\langle (s_1(g_1^{-1}m),g_1^{-1}(g_2(s_2)(m))\rangle_{E_{g_1^{-1}m}}\,dg_1\,dg_2\, d\mu(m)\\
        &=\int_{M}\co(m)^2\int_G\int_G\langle (s_1(g_1^{-1}m),h(s_2)(g_1^{-1}m)\rangle_{E_{g_1^{-1}m}}\,dg_1\,dh \,d\mu(m),
        \end{split}\end{equation}
        where $h=g_1^{-1}g_2$. It follows that (\ref{eq1}) is equal to
        \begin{equation*}
        \begin{split}
        \int_{M}\int_G\langle (s_1(m),h(s_2)(m)\rangle_{E_{m}}\,dh\, d\mu(m)&=\int_G\langle s_1,h(s_2)\rangle_{L^2}\,dh\\
        &=\langle s_1,s_2\rangle_{\phi}
    \end{split}
\end{equation*}
where we used the fact that $d\mu$ is $G$-invariant and that $G$ is unimodular.
\end{proof}

By Lemma \ref{f0 unitary} the map $\mathcal{F}_0$ extends to a bounded operator
	\begin{equation*}
		\overline{\mathcal{F}_0}\colon \ee^0(E)\to L^2_T(E)^G,
	\end{equation*}
since, for every $s\in\cs$, there holds
\begin{equation}\label{eq2}
	\begin{split}
		\|\mathcal{F}_0(s)\|_{L^2_T(E)^G}=\|s\|_{\phi}\leq\|s\|_{\ee^0(E)}.
	\end{split}
\end{equation}  
Inequality (\ref{eq2}) above is a consequence of $\|\cdot\|_{\phi}$ being the norm on $C_c(G)$ associated with the trivial representation of $G$. This fact is  the reason why we consider the maximal group $C^*$-algebra $\CS_{\rm{max}}(G)$ in this section.
\begin{lemma}\label{lemma intertwine e1}
	For every $x\in X$ and every $e_1\in \ee^1(E)$, there holds $\overline{\mathcal{F}_0}(e_1)\in W^1_{T,x}(E)^G$ and
	\begin{equation*}
		\overline{\mathcal{F}_0}\circ D_x(e_1)=\widetilde{D}_x\circ \overline{\mathcal{F}_0}(e_1).
	\end{equation*}
\end{lemma}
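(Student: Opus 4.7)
The plan is to establish the intertwining identity first on the dense subspace $\cs \subset \ee^1(E)$ by a direct calculation, and then extend to all of $\ee^1(E)$ by a closure argument using the continuity of $\overline{\mathcal{F}_0}$.

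For the computation on $\cs$, fix $s \in \cs$. The key observation is that $D_x$ commutes with the $G$-average $\Phi$. Indeed, properness of the action implies that for each $m \in M$ the integrand $g \mapsto g(s)(m)$ has compact support in $G$; this justifies differentiating under the integral sign, and combined with $G$-equivariance of $D_x$ it yields $D_x \Phi(s) = \Phi(D_x s)$ pointwise on $M$. Multiplying by the cutoff $\co$ and applying the defining relation $\widetilde{D}_x(\co \sigma) = \co D_x \sigma$ for $\sigma \in \gi$ gives
\[
\widetilde{D}_x \mathcal{F}_0(s) \;=\; \widetilde{D}_x(\co \Phi(s)) \;=\; \co\, D_x \Phi(s) \;=\; \co\, \Phi(D_x s) \;=\; \mathcal{F}_0(D_x s).
\]

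For the extension to $e_1 \in \ee^1(E)$, choose a sequence $s_n \in \cs$ converging to $e_1$ in the graph norm of $D_x\colon \ee^1(E) \to \ee^0(E)$, which is possible because $\cs$ is dense in $\ee^1(E)$ and the norms of the spaces $\ee^1_x(E)$ are all equivalent by Lemma \ref{a1lemma}. Then $s_n \to e_1$ and $D_x s_n \to D_x e_1$ in $\ee^0(E)$. By boundedness of $\overline{\mathcal{F}_0}\colon \ee^0(E) \to L^2_T(E)^G$ established in Lemma \ref{f0 unitary}, it follows that $\mathcal{F}_0(s_n) \to \overline{\mathcal{F}_0}(e_1)$ and $\widetilde{D}_x \mathcal{F}_0(s_n) = \mathcal{F}_0(D_x s_n) \to \overline{\mathcal{F}_0}(D_x e_1)$ in $L^2_T(E)^G$. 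Since $\widetilde{D}_x$ is symmetric (see Remark \ref{remark d tilde is symmetric}) and hence closable, the pair $(\overline{\mathcal{F}_0}(e_1),\, \overline{\mathcal{F}_0}(D_x e_1))$ lies in the graph of its closure. Therefore $\overline{\mathcal{F}_0}(e_1) \in W^1_{T,x}(E)^G$ and $\widetilde{D}_x \overline{\mathcal{F}_0}(e_1) = \overline{\mathcal{F}_0}(D_x e_1)$, which is the claim.

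The only subtle point is the interchange of $D_x$ with the integral $\Phi$, which relies on properness of the $G$-action to make the integrand effectively compactly supported in $G$ on each fiber; the remaining steps are routine density and continuity arguments. I expect no other significant obstacle.
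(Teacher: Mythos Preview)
Your proposal is correct and follows essentially the same route as the paper: first commute $D_x$ with the $G$-average $\Phi$ on $\cs$ using properness to justify differentiation under the integral, then extend to $\ee^1(E)$ by a density/closure argument. The paper phrases the extension step as a single norm estimate $\|(\widetilde{D}_x+i)\overline{\mathcal{F}_0}(s)\|_{L^2_T(E)^G}\leq \|(D_x+i)s\|_{\ee^0(E)}$ rather than via sequences and closability, but this is the same argument; one minor caution is that Remark~\ref{remark d tilde is symmetric} appears after this lemma and partly invokes it, so if you cite it for symmetry of $\widetilde{D}_x$ you should point specifically to the direct computation there, which is independent of the lemma.
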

\begin{proof}
	We first prove the claim for compactly supported smooth sections $s\in \cs$. Let $m\in M$ and suppose $s\in\cs$ is supported in a coordinate chart $(U;x^1,\ldots,x^n)$, with $U$ relatively compact. Let $\chi\in C_c^{\infty}(M)$ be a compactly supported smooth function such that $\chi|_U=1$. We see that $[\partial_i,\chi]|_U=0$ and, hence, that
\begin{equation*}
	\begin{split}
\left( \partial_i\int_G g(s)\,dg \right )(m)&=\left(\chi\partial_i\int_G g(s)\,dg\right)(m)\\
&=\left(\partial_i\int_G \chi g(s)\,dg\right)(m).
	\end{split}
\end{equation*}
By properness of the action, the map $(m',g)\mapsto (\chi gs)(m')$ has compact support in $M\times G$ and hence its derivatives are also compactly supported. We may thus write
\begin{equation*}
	\begin{split}
		\left( \partial_i\int_G g(s)\,dg \right )(m)&=\int_G\partial_i \chi g(s)(m)\,dg\\
		&=\int_G\partial_i (g(s))(m)\,dg.
	\end{split}
\end{equation*}
It follows that $D_x\circ\Phi(s)=\Phi\circ D_x(s)$, which we can use to show that
\begin{equation*}
    \begin{split}
        \widetilde{D}_x\circ \overline{\mathcal{F}_0}(s)&=\widetilde{D}_x(\co\Phi(s))\\
        &=\co(D_x\circ\Phi)(s)\\
        &=\co(\Phi\circ D_x)(s)\\
        &=\overline{\mathcal{F}_0}\circ D_x(s).
    \end{split}
\end{equation*}
 We now see that for every $s\in \cs$, 
\begin{equation*}
\begin{split}
	\|(\widetilde{D}_x+i)(\overline{\mathcal{F}_0}(s))\|_{L^2_T(E)^G}&= \|\overline{\mathcal{F}_0}(D_x+i)(s)\|_{L^2_T(E)^G}\\
	&\leq \| (D+i)(s)\|_{\ee^0(E)} ,
	\end{split}
\end{equation*}
where we used (\ref{eq2}). Hence, it follows that if $e_1\in \ee^1(E)$, then $\overline{\mathcal{F}_0}(e_1)\in W^1_{T,x}(E)^G$, which concludes the proof.
\end{proof}
It follows from (\ref{eq f0 unitary}) that $\overline{\mathcal{F}}_0(N_{\phi})=\{0\}$, and hence $\overline{\mathcal{F}}_0$ induces a well-defined isometry between $\ee^0_\phi(E)$ and $L^2_T(E)^G$, which we denote by
 \begin{equation*}
 	\mathcal{F}_1\colon \ee^0_\phi(E)\to L^2_T(E)^G.
 \end{equation*}
We note that for each $s\in \Gamma^{\infty}(E)^G$ there holds $\mathcal{F}_1(\co^2s)=\co s$, which proves that $\mathcal{F}_1$ is surjective, and hence a unitary isomorphism. The map
\begin{equation*}
	\mathcal{F}\colon C_0(X,\ee^0_\phi(E))\to C_0(X,L^2_T(E)^G),
\end{equation*}
defined as $\mathcal{F}(\eta_{\phi})(x):=\mathcal{F}_1(\eta_{\phi}(x))$, for every $\eta_{\phi}\in C_0(X,\ee^0_\phi(E))$,  is then easily seen to define a unitary isomorphism.
\begin{remark}\label{remark d tilde is symmetric}
  We note here that the regular self-adjoint operator $D_x$ on $\ee^0(E)$, with domain $\ee^1(E)$ defines a self-adjoint operator $\phi_*(D_x)\colon \Dom(\phi_*(D_x))\to \ee^0_\phi(E)$ (\cite[\S9]{lance}). As a result of Lemma \ref{lemma intertwine e1}, the map $\mathcal{F}_1$ intertwines $\phi_*(D_x)$ and $\widetilde{D}_x$, which implies that $\widetilde{D}_x$ is self-adjoint on $W^1_{T,x}(E)^G$. We can also check directly that $\widetilde{D}_x$ is symmetric: let $s_1, s_2\in \Gamma^{\infty}(E)^G$ and note that
 \begin{equation}\label{eqn d tilde symmetric}
 	\langle \widetilde{D}_x\co s_1,\co s_2\rangle_{L^2_T(E)^G}=\langle \co s_1,\widetilde{D}_x\co s_2\rangle_{L^2_T(E)^G}-2\langle [D_x,\co]s_1,\co s_2\rangle_{L^2(E)}.
 \end{equation}
 Calculating the second term on the right-hand side of (\ref{eqn d tilde symmetric}), we get
 \begin{equation}\label{eqn 53}
 		\langle [D_x,\co ]s_1,\co s_2\rangle_{L^2(E)}=\int_M \co(m)^2\int_G\langle \co(gm)[D_x,\co](gm)s_1(gm),s_2(gm)\rangle_{E_{gm}}\,dg\, d\mu(m).
 \end{equation}
 We note that 
 \begin{equation*}
 	\begin{split}
( 		\co[D_x,\co](s_1))(gm)&=\left(\frac{1}{2}[D_x,\co^2](s_1)(gm)\right)\\
&=g\cdot\left(\frac{1}{2}[D_x,g^{-1}(\co^2)](s_1)(m)\right),
 	\end{split}
 \end{equation*}
 where we used $G$-equivariance of $D_x$ and $G$-invariance of $s_1$. Putting it all back together into (\ref{eqn 53}), and using $G$-invariance of $s_2$, we can write
 \begin{equation*}
 	\begin{split}
 		\langle [D_x,\co]s_1,\co s_2\rangle_{L^2(E)}&=\frac{1}{2}\int_M \co(m)^2\int_G\langle g\cdot([D_x,g^{-1}(\co^2)]s_1(m)),g\cdot s_2(m)\rangle_{E_{gm}}\,dg\,d\mu(m)\\
 		&=\int_M \co(m)^2\left\langle \left[D_x,\left(\int_G g^{-1}(\co^2)\,dg\right)\right]s_1(m),s_2(m)\right\rangle_{E_m}\,d\mu(m)\\
 		&=0,
 	\end{split}
 \end{equation*}
 where we used that $G$ acts by isometries on the fibers, and that $\left(\int_G g^{-1}(\co^2)\,dg\right)(m)=1$ for every $m\in M$. \end{remark}

\begin{proof}[Proof of Proposition \ref{intgspecflo}]
Let $\widetilde{\phi}:=1\otimes\phi\colon C_0(X)\otimes\CS_{\rm{max}}(G)\to C_0(X)$. By \cite[Lemma 3.7]{koen}, $\SF_G(\DD)$ can be represented by the unbounded Kasparov $(\com,C_0(X)\otimes\CS_{\rm{max}}(G))$-cycle $(C_0(X,\ee^0(E)),\psi^{-1}\DD, 1_{\com})$ (cf. Proposition \ref{a123} and Remark \ref{remark unb sf}). We have
\begin{equation}\label{intgspecfloeq}
    \begin{split}
        \SF_G(D_\bullet)\otimes_{\CS_{\rm{max}}(G)}[\phi_*]&=[(C_0(X,\ee^0(E)),\psi^{-1}\DD, 1_{\com})]\otimes_{\CS_{\text{max}}(G)}[(\com,0,\phi)]\\
        &=[(C_0(X,\ee^0(E)),\psi^{-1}\DD, 1_{\com})]\otimes_{C_0(X)\otimes\CS_{\text{max}}(G)}[(C_0(X),0,\widetilde{\phi})]\\
        &=[(C_0(X,\ee^0(E))\otimes_{\widetilde{\phi}}C_0(X),\psi^{-1}\DD\otimes 1, 1_{\com})],
    \end{split}
\end{equation}
where we use \cite[Lemma 2.8]{koen} for the third equality. We define a map
\begin{equation*}
	\widetilde{\mathcal{F}}\colon C_0(X,\ee^0(E))\otimes_{\widetilde{\phi}}C_0(X)\to C_0(X,L^2_T(E)^G),
\end{equation*}
given by the composition
\begin{equation*}
	\widetilde{\mathcal{F}}\colon C_0(X,\ee^0(E))\otimes_{\widetilde{\phi}}C_0(X)\simeq C_0(X,\ee^0_{\phi}(E))\overset{\mathcal{F}}\simeq C_0(X,L^2_T(E)^G),
\end{equation*}
which is a unitary isomorphism. The result now follows from Lemma \ref{lemma intertwine e1}, which can be used to see that $\widetilde{\mathcal{F}}$ intertwines $\psi^{-1}\DD\otimes 1$ and $\psi^{-1}\widetilde{D}_\bullet$.
\end{proof}

We now turn to the universal cover case (Example \ref{univcover}), and consider a family $\{\widehat{D}_x\colon \Gamma_c^{\infty}(\widehat{E})\to L^2(\widehat{E})\}_{x\in X}$ on a compact manifold $\widehat{M}$ satisfying Assumptions \ref{assump2} and \ref{assump3}. We prove that the lifted family $\{D_x\colon\cs\to L^2(E)\}_{x\in X}$ of $\Gamma$-invariant operators on the universal cover $M$ satisfies the necessary assumptions so that it has an equivariant spectral flow. 

Let $U\subset M$ be a fundamental domain for the $\Gamma$-action on $M$. This means that $U$ is a relatively compact open set such that $\gamma U\cap U = \emptyset$ for every $\gamma\neq e$, and that $M\backslash(\cup_{\gamma\in\Gamma} \gamma U)$ has measure zero.

\begin{lemma}\label{lemma lift assump univ cov}
	The family $\{D_x\colon \cs\to L^2(E)\}_{x\in X}$ satisfies Assumptions \ref{assump1}-\ref{assump3}.
\end{lemma}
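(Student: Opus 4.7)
The strategy is to verify the three assumptions one by one, transferring them from the base manifold $\widehat{M}$ to the universal cover $M$ using the $\Gamma$-equivariance of the lifted operators and a partition-of-unity argument.

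\emph{Assumption \ref{assump1}.} Since each $D_x$ is a $\Gamma$-equivariant, symmetric, elliptic, first-order differential operator on $M$, and since $\Gamma$ acts properly, freely, isometrically and cocompactly on $M$, Proposition \ref{prop garding g discrete} applies to any two members $D_x, D_y$ of the family, yielding equivalent graph norms on $\cs$. Therefore $W^1_x(E) = W^1_{x_0}(E) =: W^1(E)$ for all $x \in X$.

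\emph{Assumption \ref{assump2}.} This is the main technical step. The plan is to decompose any $s \in \cs$ using a $\Gamma$-invariant partition of unity coming from $\widehat{M}$, and then use $\Gamma$-equivariance of $D_x - D_y$ to reduce estimates on $M$ to estimates on $\widehat{M}$. More precisely, choose a finite cover $\{\widehat{U}_i\}_{i=1}^N$ of $\widehat{M}$ by evenly covered open sets and a subordinate partition of unity $\{\chi_i\}$ with $\operatorname{supp}(\chi_i) \subset \widehat{U}_i$; set $\widetilde{\chi}_i := \chi_i \circ p$ where $p\colon M \to \widehat{M}$ is the covering projection. For each $i$ and each $\gamma \in \Gamma$, let $s_{i,\gamma} := (\widetilde{\chi}_i s)\cdot 1_{\gamma V_i}$, where $V_i$ is a fixed lift of $\widehat{U}_i$; these are smooth compactly supported sections in disjoint sheets. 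Since $\gamma^{-1} s_{i,\gamma}$ is supported in $V_i$ and $p|_{V_i}$ is a diffeomorphism onto $\widehat{U}_i$, it corresponds to a section $\hat{s}_{i,\gamma}$ on $\widehat{U}_i$ with the same $L^2$ and $W^1$ norms. By $\Gamma$-equivariance and the fact that $\Gamma$ acts by isometries,
\[
\|(D_x - D_y)s_{i,\gamma}\|_{L^2(E)} = \|(\widehat{D}_x - \widehat{D}_y)\hat{s}_{i,\gamma}\|_{L^2(\widehat{E})} \leq \|\widehat{D}_x - \widehat{D}_y\|_{\mathcal{B}(W^1(\widehat{E}),L^2(\widehat{E}))} \cdot \|s_{i,\gamma}\|_{W^1(E)}.
\]
Summing in $\gamma$ (using that the $s_{i,\gamma}$ have disjoint supports) and then in $i$ (using boundedness of the derivatives of the $\widetilde{\chi}_i$, which hold because $\widetilde{\chi}_i = \chi_i \circ p$ and $p$ is a local isometry), one obtains a constant $C>0$, independent of $x,y$, such that
\[
\|D_x - D_y\|_{\mathcal{B}(W^1(E), L^2(E))} \leq C \cdot \|\widehat{D}_x - \widehat{D}_y\|_{\mathcal{B}(W^1(\widehat{E}), L^2(\widehat{E}))}.
\]
Norm-continuity of the downstairs family then yields norm-continuity of $\DF$.

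\emph{Assumption \ref{assump3}.} Let $K \subset X$ be the compact set provided by Assumption \ref{assump3} applied to the downstairs family $\{\widehat{D}_x\}_{x\in X}$, giving $\widehat{D}_x^2 = \widehat{L}_x^*\widehat{L}_x + \widehat{\kappa}_x$ for every $x \in X\setminus K$, with $\widehat{\kappa}_x \geq c > 0$ fibrewise. The operators $\widehat{L}_x$ and $\widehat{\kappa}_x$ lift canonically to $\Gamma$-equivariant operators $L_x$ and $\kappa_x$ on $E$, and the identity $D_x^2 = L_x^*L_x + \kappa_x$ holds because it is a local statement preserved by pulling back via the covering map. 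Fibrewise positivity of $\kappa_x$ is likewise preserved because $\kappa_x$ agrees with $\widehat{\kappa}_x$ under the identification of fibres induced by $p$.

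The main obstacle is Assumption \ref{assump2}: one must be careful in the partition-of-unity argument to keep track of the constants uniformly in $\gamma$ and to handle the fact that multiplying $s$ by $\widetilde{\chi}_i$ introduces commutator terms contributing to the $W^1$ norm; these contributions are uniformly bounded precisely because $\widetilde{\chi}_i$ is $\Gamma$-invariant and $\widehat{M}$ is compact.
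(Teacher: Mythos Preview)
Your proof is correct and covers all three assumptions in the same spirit as the paper. For Assumptions \ref{assump1} and \ref{assump3} your arguments are essentially identical to the paper's: invoke Proposition \ref{prop garding g discrete} for the first, and lift the local decomposition $\widehat{D}_x^2 = \widehat{L}_x^*\widehat{L}_x + \widehat{\kappa}_x$ through the covering map for the third.

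For Assumption \ref{assump2} you take a genuinely different route. The paper argues via a single fundamental domain $U$: it decomposes $\|(D_x-D_y)s\|_{L^2(E)}^2$ as a sum over $\gamma U$, uses $\Gamma$-equivariance and locality of $D_x-D_y$ to transport each piece back into $U$, and then bounds by $\|(D_x-D_y)|_U\|$, which equals $\|\widehat{D}_x-\widehat{D}_y\|$. This gives the inequality with constant $1$ directly, without any commutator terms. Your approach instead uses a $\Gamma$-invariant partition of unity lifted from a finite evenly-covered atlas of $\widehat{M}$; this introduces commutator terms $[D_x,\widetilde{\chi}_i]$ and a combinatorial constant coming from the number of charts, which you correctly observe are uniformly controlled because the $\widetilde{\chi}_i$ are $\Gamma$-invariant with bounded derivatives. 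Both arguments work; the fundamental-domain version is shorter and avoids the bookkeeping of cutoff commutators, while your partition-of-unity version has the advantage that every intermediate section stays manifestly smooth and compactly supported, sidestepping any concern about restricting to the boundary of a fundamental domain.
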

\begin{proof}
	We note that Assumption \ref{assump1} is readily satisfied because of Proposition \ref{prop garding g discrete}. Let $s\in \cs$ and note that 
	\begin{equation*}
		\begin{split}
			\|(D_x -D_y)s\|_{L^2(E)}&=\sum_{\gamma\in \Gamma}\| (D_x-D_y)s|_{\gamma U}\|_{L^2(E)}\\
			&=\sum_{\gamma\in \Gamma}\|(D_x-D_y)\gamma (s)|_{\gamma U}\|_{L^2(E)},
		\end{split}
	\end{equation*}
for every $x,y\in X$, where we use $\Gamma$-equivariance of $D_x-D_y$. We note that $\gamma(s)|_{\gamma U}$ is supported within $U$. It follows that
	\begin{equation*}
		\begin{split}
			\|(D_x -D_y)s\|_{L^2(E)}&\leq \|(D_x-D_y)|_U\|_{\mathcal{B}(W^1(E|_U),L^2(E|_U))}\cdot \sum_{\gamma\in \Gamma} \|\gamma(s)|_{\gamma U}\|_{W^1(E)}\\
			&=\|(D_x-D_y)|_U\|_{\mathcal{B}(W^1(E|_U),L^2(E|_U))}\|s\|_{W^1(E)}\\
			&=\| \widehat{D}_x - \widehat{D}_y\|_{\mathcal{B}(W^1(\widehat{E}),L^2(\widehat{E}))}\|s\|_{W^1(E)}.
		\end{split}
	\end{equation*}
	We thus see that $\|D_x-D_y\|_{\mathcal{B}(W^1(E),L^2(E))}\leq \|\widehat{D}_x-\widehat{D}_y\|_{\mathcal{B}(W^1(\widehat{E}),L^2(\widehat{E}))}$. The result then follows from the norm-continuity of $x\mapsto \widehat{D}_x$ as a map $X\to \mathcal{B}(W^1(\widehat{E}),L^2(\widehat{E}))$.
	
	Now, we know that $\widehat{D}_x=\widehat{L}_x^*\widehat{L}_x+\widehat{\kappa}_x$ for every $x\in X\backslash K$, where $\widehat{L}_x$ is a differential operator and $\widehat{\kappa}_x$ is an endomorphism. These can be lifted to, respectively, a $\Gamma$-equivariant differential operator $L_x$ and $\Gamma$-invariant endomorphism $\kappa_x$ on $E\to M$, and there holds $D_x^2=L_x^*L_x+\kappa_x$ for every $x\in X\backslash K$. Let $c>0$ such that $\widehat{\kappa}_x\geq c$ on $L^2(\widehat{E})$. We note that $\kappa_x\geq c>0$ fiberwise, by construction. 
	
	\end{proof}

\begin{proof}[Proof of Corollary \ref{corollary summation map}]
	The family $\DF$ satisfies Assumptions \ref{assump1}-\ref{assump3}, by Lemma \ref{lemma lift assump univ cov}. Hence, it defines a $K\!K$-theoretic $G$-spectral flow class $\SF_G(\DD)\in K\!K^1(\com,C_0(X,\CS_{\rm{max}}(G))$. Since the action of $\Gamma$ on $M$ is free, we can see by the results in  \cite[\S4.3]{petermathai} that there exist a unitary isomorphism $L^2_T(E)^{\Gamma}\simeq L^2(\widehat{E})$ which intertwines the operators $\widetilde{D}_x$ and $\widehat{D}_x$, for every $x\in X$. It is then straightforward to see that the unbounded Kasparov $(\com, C_0(X))$-cycle $(C_0(X,L^2_T(E)^{\Gamma}),\psi^{-1}\widetilde{D}_\bullet, 1_{\com})$ represents the class $\SF_{\{e\}}(\widehat{D}_\bullet)$ (see Remark \ref{remark unb sf}). The result now follows from Proposition \ref{intgspecflo}.
\end{proof}

\section{Index Theory and Spectral Flow}\label{seceqindextheory}

This section contains the proofs of our ``index equals spectral flow'' results, Theorems \ref{propfredindeqsfg1} and \ref{propindtheorem}.
We continue with the notation and assumptions from Subsection \ref{sec SFG}.

\subsection{Continuous Families}\label{subseccontfam}

In this subsection we also assume that the family $\DF$ is locally constant outside of $K\subset X$ (Assumption \ref{assump4}). 
Define the \textit{product operator}
\begin{equation}\label{product operator}
    \widetilde{P}_{D}:=\begin{pmatrix}0&P_X+i\DD\\P_X-i\DD&0
    \end{pmatrix}
\end{equation}
on $(\text{Dom}(P_X)\cap \text{Dom}(\DD))^{\oplus 2} \subset (\ee^0(E)\otimes L^2(X,F))^{\oplus 2}$. 
It follows from \cite[Theorem 4.3]{koen} that the operator $\widetilde{P}_D$ is regular self-adjoint and Fredholm on $(\ee^0(E)\otimes L^2(X,F))^{\oplus 2}$ and, hence, it defines a class
\begin{equation}\label{eq KK PD}
	[\widetilde{P}_D]\in K\!K^0(\com,\CS(G)),
\end{equation}
which is mapped to the Fredholm index 
\begin{equation*}
	\Index(P_X-i\DD)\in K_0(\CS(G))
\end{equation*}
 of $P_X-i\DD$ on $\ee^0(E)\otimes L^2(X,F)$ under the isomorphism $K\!K^0(\com,\CS(G))\simeq K_0(\CS(G))$ (see e.g. Subsection \ref{subsecfredind}). The following version of Theorem \ref{propfredindeqsfg1} is a direct consequence of  \cite[Theorem 5.15]{koen}:
\begin{theorem}\label{propfredindeqsfg}
	Suppose the family $\DF$ satisfies Assumptions \ref{assump1}-\ref{assump4}. Then the the class $[\widetilde{P}_D]\in K\!K^0(\com,\CS(G))$ is given by the Kasparov product between the $K\!K$-theoretic $G$-spectral flow class $\SF_G(\DD)\in K\!K^1(\com,C_0(X,\CS(G)))$ and the $K$-homology class $[P_X]\in K\!K^1(C_0(X),\com)$. In other words,
\begin{equation*}
	\Index(P_X-i\DD)=\SF_G(\DD)\otimes_{C_0(X)}[P_X]\in K_0(\CS(G)).
\end{equation*}
In particular, if $X=\re$ then
\begin{equation*}
	\Index(-i\partial_t-i\DD)=\Sf_G(\DD)\in K_0(\CS(G)).
\end{equation*}
\end{theorem}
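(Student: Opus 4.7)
The plan is to deduce Theorem \ref{propfredindeqsfg} as a direct application of \cite[Theorem 5.15]{koen}, which is proved in precisely the Hilbert $C^*$-module generality we need. The work has effectively already been done in Subsection \ref{sec pf SFG}; what remains is a bookkeeping argument checking that the hypotheses of \cite[Theorem 5.15]{koen} match ours and then identifying the resulting Kasparov product with the Fredholm index on the one hand, and with our definition of $\SF_G(\DD)$ on the other.

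First, I would verify the input hypotheses of \cite[Theorem 5.15]{koen} applied to the Hilbert $C_0(X, \CS(G))$-module $C_0(X,\ee^0(E))$ and the family $\{D_x \colon \ee^1(E) \to \ee^0(E)\}_{x\in X}$. Conditions (a1)--(a3) of \cite[\S3.1]{koen} are exactly what Lemmas \ref{a1lemma}, \ref{a2lemma} and \ref{a3lemma} give: common dense domain with compact inclusion $\ee^1(E) \hookrightarrow \ee^0(E)$, norm-continuity $x \mapsto D_x \in \mathcal{L}(\ee^1(E),\ee^0(E))$, and uniform invertibility outside $K$. The hypothesis (A4$'$) from \cite{koen} is a local constancy condition outside $K$, and our Assumption \ref{assump4} is a (stronger) finite-cover version of it. Finally, the operator $P_X$ is a symmetric elliptic first-order differential operator with finite propagation on a complete Riemannian manifold $X$, so \cite[Theorem 5.15]{koen} applies verbatim.

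The conclusion of that theorem is that the unbounded operator $\widetilde{P}_D$ in \eqref{product operator} is regular and Fredholm on $(\ee^0(E)\otimes L^2(X,F))^{\oplus 2}$, and that its bounded transform represents the interior Kasparov product
\begin{equation*}
[\widetilde{P}_D] \;=\; \SF_G(\DD) \otimes_{C_0(X)} [P_X] \;\in\; K\!K^0(\com, \CS(G)).
\end{equation*}
Under the standard identification $K\!K^0(\com,\CS(G)) \simeq K_0(\CS(G))$ discussed in Subsection \ref{subsecfredind}, the off-diagonal shape of $\widetilde{P}_D$ sends $[\widetilde{P}_D]$ to the index of its lower-left corner, namely $\Index(P_X - i\DD)$. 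Combining these two identifications yields the displayed formula.

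For the special case $X = \re$, I would recall that the Dirac operator $-i\partial_t$ on $L^2(\re)$ has finite propagation and represents the generator $[-i\partial_t] \in K\!K^1(C_0(\re), \com)$ of Bott periodicity; its Kasparov product with classes in $K\!K^1(\com, C_0(\re, \CS(G)))$ implements the Bott isomorphism to $K_0(\CS(G))$. By Definition \ref{defgspecflow}, this product sends $\SF_G(\DD)$ precisely to $\Sf_G(\DD)$, so taking $P_X = -i\partial_t$ in the general formula gives the last displayed equation. I do not expect any genuine obstacle here: the entire content sits inside \cite[Theorem 5.15]{koen}, and the only task is to confirm that our Assumptions \ref{assump1}--\ref{assump4} translate into (a1)--(a3) and (A4$'$); the mildest subtlety is that \cite{koen} works with arbitrary Hilbert $C^*$-modules so that no additional adaptation to the equivariant setting is required.
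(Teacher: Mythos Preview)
Your proposal is correct and matches the paper's proof essentially verbatim: the paper simply observes that Assumption \ref{assump4} implies Assumption (A4$'$) of \cite{koen} and then invokes \cite[Theorem 5.15]{koen}, with the verification of (a1)--(a3) having already been carried out in Lemmas \ref{a1lemma}--\ref{a3lemma}. Your additional remarks about the identification with the Fredholm index and the $X=\re$ case are accurate elaborations of what the paper leaves implicit.
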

\begin{proof}
It is straightforward to check that if the family $\DF$ is locally constant outside of $K$, then it satisfies Assumption (A4') in \cite{koen}.  The result hence follows from \cite[Theorem 5.15]{koen}.
\end{proof}

\subsection{Differentiable Families}\label{subsecdiffam}

We prepare for the proof of Theorem \ref{propindtheorem} by proving some properties of weak derivatives.
We assume throughout this subsection that the family $D\colon X\to \mathcal{B}(W^1(E),L^2(E))$ has uniformly bounded weak derivative (see Definition \ref{defweakdif}). For each $x\in X$, we write $\|dD(x)\|_{\mathcal{B}(W^1(E),L^2(E)\otimes T_x^*X)}$ simply as $\|dD(x)\|$ in order to simplify the notation.

\begin{lemma}\label{lemma weak der local}
	For each $x\in X$, the weak derivative $dD(x)\in \mathcal{B}(W^1(E),L^2(E)\otimes T_x^*X)$ can be restricted to a local, $G$-equivariant operator $dD(x)\colon \cs\to\cs\otimes T_x^*X$ between smooth sections.
\end{lemma}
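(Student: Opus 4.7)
The plan is to verify three properties of the weak derivative $dD(x)\colon W^1(E) \to L^2(E) \otimes T_x^*X$: locality, $G$-equivariance, and preservation of $\cs$. In each case, the strategy is to differentiate, at $y = x$, an identity satisfied by the family $\{D_y\}_{y \in X}$, using the defining relation $\langle dD(x)(h) e_1, e_2 \rangle = df_{e_1, e_2}(h)$, where $f_{e_1, e_2}(y) := \langle D_y e_1, e_2 \rangle_{L^2(E)}$.

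For locality, first I would take $s_1, s_2 \in \cs$ with disjoint supports. Since each $D_y$ is a first-order differential operator, hence local, the function $f_{s_1, s_2}$ vanishes identically on $X$. Differentiating at $y = x$ gives $\langle dD(x)(h) s_1, s_2 \rangle = 0$ for every $h \in T_x X$ and every $s_2 \in \cs$ supported away from $\Supp(s_1)$. As $s_2$ ranges over all test sections supported in any open set $V$ disjoint from $\Supp(s_1)$, density of $\cs|_V$ in $L^2(V)$ forces $\Supp(dD(x)(h) s_1) \subset \Supp(s_1)$.

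For $G$-equivariance, I would exploit unitarity of the $G$-action on $L^2(E)$ together with the $G$-equivariance of each $D_y$: for any $g \in G$ and $s_1, s_2 \in \cs$, the identity $\langle D_y (g \cdot s_1), s_2 \rangle_{L^2(E)} = \langle D_y s_1, g^{-1} \cdot s_2 \rangle_{L^2(E)}$ holds for every $y \in X$. Differentiating in $y$ at $y = x$ and letting $s_2$ range over $\cs$ yields the identity $dD(x)(h)(g \cdot s_1) = g \cdot dD(x)(h) s_1$.

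The main obstacle I expect is preservation of smoothness, since $dD(x)$ is a priori only a bounded operator into $L^2(E) \otimes T_x^*X$, whereas locality alone would only place its image in the space of compactly supported $L^2$ sections. I would address this by working in local coordinates: on a chart $U \subset M$ with a local trivialization of $E$, each $D_y$ takes the form $D_y|_U = \sum_i A_y^i \partial_i + B_y$ with matrix-valued coefficients smooth in $m \in U$. Testing the defining identity against judiciously chosen pairs $(s_1, s_2) \in \cs \times \cs$ supported in $U$ --- for instance, taking $s_1$ of the form $\phi \cdot e_i$ for coordinate sections $e_i$ and $\phi \in C_c^\infty(U)$, and isolating the highest-derivative term via a commutator trick --- allows one to recover the pointwise values $A_y^i(m)$ and $B_y(m)$ (against $s_2$) as weakly differentiable functions of $y$. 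Since differentiation in $y$ can be interchanged with differentiation in $m$ in these paired expressions, the derivatives $dA_x^i(h)(m)$ and $dB_x(h)(m)$ remain smooth in $m$. Consequently $dD(x)(h)$ has the local form $\sum_i (dA_x^i(h)) \partial_i + dB_x(h)$ of a first-order differential operator with smooth coefficients, which manifestly preserves $\cs$.
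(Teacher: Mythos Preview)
Your proposal is correct and follows the same approach as the paper: locality and $G$-equivariance are obtained by differentiating, at $y=x$, the corresponding identity satisfied by the family $\{D_y\}$, and preservation of smoothness is argued by expressing $dD(x)(h)$ in terms of the $y$-derivatives of the local coefficients of $D_y$. Your account of the smoothness step is considerably more explicit than the paper's one-line assertion that it ``is a consequence of its dependence on the local coefficients of $D_x$, which are smooth.''
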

\begin{proof}
	Let $s_1\in\cs$ and assume $s_1|_U=0$ for some open subset $U\subset M$. Let $s_2\in\cs$ be a section supported within $U$. We see that
		\begin{equation}\label{eq proof weak dif}
	 	\lim_{h\to 0}\frac{\left |\langle (dD(x)(h)s_1)|_U,s_2|_U\rangle_{L^2(E)} \right|}{\|h\|_{T_xX}}=0,
	\end{equation}
	for every $h\in T_x X$. Since $h\mapsto \langle (dD(x)(h)s_1)|_U,s_2|_U\rangle_{L^2}$ is a linear map, it follows from (\ref{eq proof weak dif}) that 
$$(dD(x)(h)s_1)|_U=0,$$ 
for every $h\in T_x^*X$ and $x\in X$, which proves the locality condition. The fact that $dD(x)$ is $G$-equivariant is similarly obtained by calculating that
\begin{equation*}
	\lim_{h\to 0}\frac{\left|\left\langle \left(g(dD(x)(h))-dD(x)(h))s_1,s_2\right)\right\rangle_{L^2(E)}\right|}{\|h\|_{T_xX}}=0
\end{equation*}
for every $x\in X$ and $s_1,s_2\in \cs$, which follows by $G$-equivariance of $D_x$. The fact that $dD(x)(h)$ maps smooth sections to smooth sections is a consequence of its dependence on the local coefficients of $D_x$, which are smooth.
\end{proof}
\begin{lemma}\label{lemma e0 weak derivatives}
	 For each $x\in X$, the weak derivative $dD(x)\colon \cs\to\cs\otimes T_x^*X$ extends to an adjointable operator $dD(x)\in \mathcal{L}(\ee^1(E),\ee^0(E)\otimes T_x^*X)$. Moreover,
	\begin{equation}\label{eq unif boun weak deri e0}
		\displaystyle\sup_{y\in X}\|dD(y)\|_{\mathcal{L}(\ee^1(E),\ee^0(E)\otimes T_x^* X)}< \infty.
	\end{equation}
\end{lemma}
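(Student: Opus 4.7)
The plan is to combine Lemma \ref{lemma weak der local} with Proposition \ref{haoguo35}. By Lemma \ref{lemma weak der local}, for each $x\in X$ the weak derivative $dD(x)\colon \cs \to \cs \otimes T_x^*X$ is a local, $G$-equivariant operator. By item 2 of Definition \ref{defweakdif}, it extends to a bounded operator $W^1(E) \to L^2(E) \otimes T_x^*X$. The goal is to apply Proposition \ref{haoguo35} (in a mildly vector-valued form) to conclude that it further extends to an adjointable operator $\ee^1(E) \to \ee^0(E) \otimes T_x^*X$ with a norm estimate of the form
\[
\|dD(x)\|_{\mathcal{L}(\ee^1(E),\ee^0(E)\otimes T_x^*X)} \leq C_{\co}\,\|dD(x)\|_{\mathcal{B}(W^1(E), L^2(E)\otimes T_x^*X)},
\]
where $C_{\co}$ depends only on the cutoff function $\co$.

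To execute this, first choose an orthonormal basis $e_1,\ldots,e_n$ of $T_x^*X$. Then $dD(x)$ decomposes as $dD(x) = \sum_{i=1}^n A_i \otimes e_i$, where each $A_i\colon \cs \to \cs$ is a local, $G$-equivariant operator that is bounded as a map $W^1(E) \to L^2(E)$. Proposition \ref{haoguo35} applies directly to each $A_i$ and yields an adjointable extension $A_i \in \mathcal{L}(\ee^1(E),\ee^0(E))$ with $\|A_i\|_{\mathcal{L}(\ee^1,\ee^0)} \leq C_\co\|A_i\|_{\mathcal{B}(W^1,L^2)}$. Assembling these (and using that the $e_i$ are orthonormal) produces the desired adjointable operator $dD(x)\in \mathcal{L}(\ee^1(E),\ee^0(E)\otimes T_x^*X)$ with the stated norm bound; the adjoint is obtained by the same argument applied to the formal adjoints $A_i^*$.

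For the uniform bound (\ref{eq unif boun weak deri e0}), observe that the constant $C_\co$ produced by Proposition \ref{haoguo35} depends only on $\co$, hence is independent of $x\in X$. Combining this with the pointwise estimate and item 3 of Definition \ref{defweakdif} yields
\[
\sup_{y\in X} \|dD(y)\|_{\mathcal{L}(\ee^1(E),\ee^0(E)\otimes T_y^*X)} \leq C_\co \sup_{y\in X} \|dD(y)\|_{\mathcal{B}(W^1(E),L^2(E)\otimes T_y^*X)} < \infty.
\]

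The main obstacle is a bookkeeping one: Proposition \ref{haoguo35} as stated concerns scalar-valued operators $\cs \to \cs$, while here the target carries a finite-dimensional tensor factor $T_x^*X$ that varies with $x$. The basis decomposition reduces this to the scalar case, and the fact that $C_\co$ depends only on the cutoff function (and in particular not on $x$ nor on the particular component operator) is what keeps the final bound uniform in $x$. Locality of $dD(x)$, supplied by Lemma \ref{lemma weak der local}, is essential here, since it ensures that the distributional kernel of each $A_i$ is supported on the diagonal, so the constant $C_\co$ controlling the estimate does not deteriorate with $x$.
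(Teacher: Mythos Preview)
Your proof is correct and follows essentially the same approach as the paper: apply Lemma \ref{lemma weak der local} to get locality and $G$-equivariance of $dD(x)$, then invoke Proposition \ref{haoguo35} to obtain adjointability on the $G$-Sobolev modules together with the norm estimate $\|dD(x)\|_{\mathcal{L}(\ee^1,\ee^0\otimes T_x^*X)} \leq C_{\co}\|dD(x)\|_{\mathcal{B}(W^1,L^2\otimes T_x^*X)}$, and finally use item 3 of Definition \ref{defweakdif} for the uniform bound. The paper simply asserts that Proposition \ref{haoguo35} applies in this vector-valued setting, whereas you have spelled out the orthonormal-basis decomposition that reduces it to the scalar case; this extra bookkeeping is a reasonable clarification but not a different argument.
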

\begin{proof}
	By Lemma \ref{lemma weak der local} we can apply Proposition \ref{haoguo35} in order to conclude that $dD(x)\in \mathcal{L}(\ee^1(E),\ee^0(E)\otimes T_x^*X)$. Furthermore, there exists a constant $C_{\co}$ such that
	\begin{equation*}
		\|dD(x)\|_{\mathcal{L}(\ee^1(E),\ee^0(E)\otimes T_x^*X)}\leq C_{\co}\cdot\|dD(x)\|_{\mathcal{B}(W^1(E),L^2(E)\otimes T_x^*X)},
	\end{equation*}
for every $x\in X$, from which we can conclude (\ref{eq unif boun weak deri e0}).
\end{proof}

\begin{lemma}\label{difl2dife0}
For each pair of sections $s_1, s_2\in \cs$, the map $x\mapsto \langle D_x s_1,s_2\rangle_{\ee^0(E)}$ is differentiable. 
\end{lemma}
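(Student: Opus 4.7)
The plan is to take $h \mapsto \langle dD(x)(h) s_1, s_2\rangle_{\ee^0(E)}$ as the candidate derivative, localise the inner product on $G$ via properness, and conclude by dominated convergence using the uniform weak derivative bound from Lemma \ref{lemma e0 weak derivatives}.

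First I would observe that, since $s_1, s_2 \in \cs$ are compactly supported and the $G$-action is proper, there is a compact $K_0 \subset G$ such that any $v \in \cs$ with $\Supp(v) \subset \Supp(s_1)$ satisfies $\langle v, g(s_2)\rangle_{L^2(E)} = 0$ for all $g \notin K_0$. Locality of $D_y$, together with Lemma \ref{lemma weak der local} (locality of $dD(y)(h)$), guarantees $\Supp(D_y s_1)$ and $\Supp(dD(y)(h) s_1)$ are both contained in $\Supp(s_1)$, so the function
\[ r_h(g) := \langle (D_{x+h} - D_x - dD(x)(h)) s_1, g(s_2)\rangle_{L^2(E)} \]
is supported in $K_0$ for every $h$. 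By the inequalities (\ref{l1ineq}),
\[ \|r_h\|_{\CS(G)} \leq \|r_h\|_{L^1(G)} = \int_{K_0} |r_h(g)|\, dg, \]
so it would suffice to show $\|h\|^{-1} \int_{K_0} |r_h(g)|\, dg \to 0$ as $h \to 0$.

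The pointwise convergence $r_h(g)/\|h\| \to 0$ for each fixed $g \in G$ follows by applying the weak derivative hypothesis to the sections $s_1 \in W^1(E)$ and $g(s_2) \in L^2(E)$: this yields differentiability of $\psi_g(y) := \langle D_y s_1, g(s_2)\rangle_{L^2(E)}$ with $d\psi_g(y)(h) = \langle dD(y)(h) s_1, g(s_2)\rangle_{L^2(E)}$. For dominated convergence I would set $M := \sup_{y \in X} \|dD(y)\|_{\mathcal{B}(W^1(E), L^2(E)\otimes T_y^* X)} < \infty$ and use Cauchy--Schwarz together with $G$-invariance of $\|\cdot\|_{L^2(E)}$ to obtain $|d\psi_g(y)(v)| \leq M \|v\|\, \|s_1\|_{W^1(E)} \|s_2\|_{L^2(E)}$ uniformly in $y, v, g$. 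The mean value inequality applied to $\psi_g \circ \gamma$ along the geodesic $\gamma(t) = \exp_x(th)$ would then give $|\psi_g(x+h) - \psi_g(x)| \leq M \|h\|\, \|s_1\|_{W^1(E)} \|s_2\|_{L^2(E)}$, and the same bound holds for $|d\psi_g(x)(h)|$. Thus $|r_h(g)|/\|h\|$ is uniformly bounded by $2M \|s_1\|_{W^1(E)} \|s_2\|_{L^2(E)}$ for $g \in K_0$ and small $h$, which is integrable on the compact set $K_0$; dominated convergence then delivers the required limit.

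The hard part is promoting pointwise (weak) differentiability of scalar $L^2$-inner products to norm differentiability in $\CS(G)$. I expect this to hinge on combining the properness-based support control on $G$ with the uniform operator bound on $dD$ from Lemma \ref{lemma e0 weak derivatives} in order to produce the dominating function which feeds into DCT.
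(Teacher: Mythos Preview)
Your proposal is correct and follows essentially the same approach as the paper: reduce to an $L^1(G)$-estimate via \eqref{l1ineq}, use properness and locality (Lemma \ref{lemma weak der local}) to confine the support of $g \mapsto r_h(g)$ to a fixed compact $K_0 \subset G$, invoke the uniform weak-derivative bound together with the mean value inequality along geodesics (exactly the estimate \eqref{contfromdiff}) to produce a constant dominating function on $K_0$, and conclude by dominated convergence. The only cosmetic differences are that the paper runs DCT along sequences $h/n$ rather than a continuous limit, and that the uniform bound you need is the $L^2$-level hypothesis $\sup_y \|dD(y)\|_{\mathcal{B}(W^1(E),L^2(E)\otimes T_y^*X)} < \infty$ rather than its $\ee^0$-upgrade in Lemma \ref{lemma e0 weak derivatives}; your proof body already uses the correct $L^2$ bound, so this is just a misattribution in your closing paragraph.
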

\begin{proof} 
Fix $x\in X$ and let $dD(x)\colon \cs\to\cs\otimes T_x^*X$ be as in Lemma \ref{lemma weak der local}. Note that by (\ref{l1ineq}), it is enough to prove that
\begin{equation}\label{eq lim int dif}
		\displaystyle\lim_{n\to \infty}\int_G\frac{\left|\left\langle \left(D_{x+h/n}-D_x-dD(x)(h/n)\right)s_1,g(s_2)\right\rangle_{L^2(E)}\right|}{\|h/n\|_{T_xX}}\,dg=0,
\end{equation}
for every $h\in T_x^*X$. Let $f_n\colon G\to \re$ be defined as the integrand 
\begin{equation*}
	f_n(g):=\frac{\left|\left\langle \left(D_{x+h/n}-D_x-dD(x)(h/n)\right)s_1,g(s_2)\right\rangle_{L^2(E)}\right|}{\|h/n\|_{T_xX}}.
\end{equation*}
By hypothesis, we have that $f_n\to 0$ pointwise. The set 
\begin{equation*}
 	K_0:=\{g\in G: \text{supp}(s_1)\cap \text{supp}(g(s_2))\neq \emptyset\}
 \end{equation*}
is compact by properness of the action, and the functions $f_n$ are supported in $K_0$, for every $n\in \mathbb{N}$, since the operator $D_x-D_{x+h}-dD(x)(h/n)$ is local by Lemma \ref{lemma weak der local}. We can see from (\ref{contfromdiff}) that
\begin{equation}\label{eq bound fn}
	\left|\left\langle \left(D_{x+h/n}-D_x\right)s_1,g(s_2)\right\rangle_{L^2(E)}\right|\leq \displaystyle\sup_{y\in X} \|dD(y)\|\cdot \|s_1\|_{W^1(E)}\cdot\|s_2\|_{L^2(E)}\cdot \|h/n\|_{T_xX},\end{equation}
where we use the fact that $G$ acts by isometries, along with the fact that 
\begin{equation*}
	\begin{split}
		\text{dist}(x,x+h/n)&=\text{dist}(x,\exp_x(h/n))\\ 
		&=\|h/n\|_{T_xX}.
	\end{split}
\end{equation*}
The right-hand side of (\ref{eq bound fn}) is finite, since the number $\sup_{y\in X}\|dD(y)\|$ is finite by assumption. We can estimate
\begin{equation*}
\begin{split}
		f_n(g)&\leq  \frac{\left|\left\langle \left(D_{x+h/n}-D_x\right)s_1,g(s_2)\right\rangle_{L^2(E)}\right|+\left|\left\langle dD(x)(h/n)s_1,g(s_2)\right\rangle_{L^2(E)}\right|}{\|h/n\|_{T_xX}}\\
		&\leq  2\cdot\displaystyle\sup_{y\in X}\|dD(y)\|\cdot\|s_1\|_{W^1(E)}\cdot \|s_2\|_{L^2(E)}
		\end{split}
\end{equation*}
for every $n\in \mathbb{N}$ and $g\in G$. Set 
\begin{equation*}
 	V:=2\cdot\displaystyle\sup_{y\in X}\|dD(y)\|\cdot\|s_1\|_{W^1(E)}\cdot \|s_2\|_{L^2(E)}
 \end{equation*}
 and let $l\colon G\to \re$ be a compactly supported continuous function such that $l|_{K_0}=V$. It follows that $l$ is an integrable function on $G$ such that $|f_n(g)|\leq l(g)$, for every $n\in\mathbb{N}$ and $g\in G$. Then, (\ref{eq lim int dif}) follows from the fact that $f_n\to 0$ pointwise and the Dominated Convergence Theorem.
\end{proof}

\begin{remark}
Our results do not allow us to state that the map $D\colon X\to \mathcal{L}(\ee^0(E),\ee^1(E))$ has uniformly bounded weak derivative, because we can only prove Lemma \ref{difl2dife0} for compactly supported smooth sections $s_1,s_2\in \cs$. However, we shall see in Subsection \ref{subsec estimates com} that this is enough for us to derive the ``index equals spectral flow" Theorem \ref{propindtheorem}.
\end{remark}

\subsection{Estimates for Commutators}\label{subsec estimates com}

In order to prove Theorem \ref{propindtheorem}, we analyse the commutator between $P_X$ and $\DD'$, as follows:
\begin{definition}[{\rm{\cite[Assumption 7.1]{KL2}}}]\label{klass7.1}
	Let $P$ and $S$ be regular self-adjoint operators on a Hilbert $B$-module $\mathcal{E}$, and let $\mu\in \re\backslash\{0\}$. One says that $[P,S](S-i\mu)^{-1}$ is \textit{well-defined and bounded} on $\mathcal{E}$ when
	\begin{enumerate}[(a)]
		\item There exists a submodule $\mathcal{S}\subset \ee$ which is a core for $P$.
		\item\label{cond2wdb} The following inclusions hold:
		\begin{equation}
			(S-i\mu)^{-1}(\xi)\in \text{Dom}(P)\cap \text{Dom}(S)\;\;\;\text{and}\;\;\; P(S-i\mu)^{-1}(\xi)\in \text{Dom}(S)
		\end{equation}
		for all $\xi \in \mathcal{S}$.
		\item The map
		\begin{equation*}
			[P,S](S-i\mu)^{-1}\colon\mathcal{S}\to\ee
		\end{equation*}
		extends to a bounded, adjointable operator in $\mathcal{L}(\ee)$.	\end{enumerate}
\end{definition}

\begin{lemma}\label{welldefinedbounded}
	Let $P_X$ and $\DF$ be as in Theorem \ref{propindtheorem}. 
	Then $[P_X,\DD](\DD\pm i\mu)^{-1}$ is well-defined and bounded on $\ee^0(E)\otimes L^2(X, F)$, for every $\mu\in\re\backslash\{0\}$.\end{lemma}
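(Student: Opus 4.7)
The plan is to verify the three conditions (a)--(c) of Definition \ref{klass7.1} by exploiting the explicit description of $1\otimes P_X$ via the operator $1\otimes_d P_X$ from Remark \ref{remark 2}, together with a weak-form resolvent identity for the family $\{D_x\}_{x\in X}$. First, I would take the core $\mathcal{S} \subset \ee^0(E) \otimes L^2(X,F)$ to be the linear span of elements of the form $f\cdot (e\otimes \xi)$ with $f\in C_c^\infty(X)$, $e\in \cs$, and $\xi$ ranging over a smooth core for $P_X$ (available because $P_X$ has finite propagation speed). Density of $\mathcal{S}$ in $\Dom(P_X)$ in the graph norm is then standard.

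The main analytic step verifies condition (b). For $\zeta \in \mathcal{S}$, set $\zeta_\mu := (\DD - i\mu)^{-1}\zeta$, which is pointwise $x\mapsto (D_x - i\mu)^{-1}\zeta(x)$ and lies in $\Dom(\DD)$ at once. Membership in $\Dom(P_X)$ follows from the weak-form resolvent identity
\[
d\bigl[(D_x - i\mu)^{-1}\bigr](h) \;=\; -(D_x - i\mu)^{-1}\, dD(x)(h)\, (D_x - i\mu)^{-1} \qquad (h\in T_xX),
\]
which is justified by the uniformly bounded weak derivative assumption on $\{D_x\}$ together with Lemma \ref{difl2dife0}. Since $(D_x - i\mu)^{-1}\colon \ee^0(E) \to \ee^1(E)$ has norm bounded uniformly in $x\in X$ by Lemma \ref{lem UEGN}, the derivative $d\zeta_\mu(x)$ lies pointwise in $T_x^*X \otimes \ee^1(E)$, so $\zeta_\mu \in \Dom(P_X)$. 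Applying the identity once more yields $P_X \zeta_\mu \in \Dom(\DD)$.

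Next I would compute the commutator explicitly on $\mathcal{S}$. For $\eta \otimes \xi \in \mathcal{S}$, applying $1\otimes_d P_X$ to both sides and using $d(\DD\eta)(x) = dD(x)(\eta(x)) + D_x(d\eta(x))$, the contribution $D_x(d\eta)$ cancels against $\DD\circ(\sigma_{P_X}\otimes 1)(d\eta)$, because $\sigma_{P_X}$ acts on the $F$-factor while $D_x$ acts on the $\ee^0(E)$-factor and the two commute. What survives is
\[
[1\otimes_d P_X,\, \DD\otimes 1](\eta \otimes \xi)(x) \;=\; (\sigma_{P_X}\otimes 1)\bigl(dD(x)(\eta(x))\bigr)(\xi(x)).
\]
Composing with $(\DD - i\mu)^{-1}$ and combining the three uniform bounds --- namely, $\sup_x\|\sigma_{P_X}(x)\|_{\rm op}<\infty$ from finite propagation speed of $P_X$, $\sup_x\|dD(x)\|_{\mathcal{L}(\ee^1(E),\ee^0(E)\otimes T_x^*X)}<\infty$ from Lemma \ref{lemma e0 weak derivatives}, and $\sup_x\|(D_x - i\mu)^{-1}\|_{\mathcal{L}(\ee^0(E),\ee^1(E))}<\infty$ from uniform graph-norm equivalence --- yields a pointwise-in-$x$ bound that integrates to give boundedness of $[P_X,\DD](\DD - i\mu)^{-1}$ on $\ee^0(E)\otimes L^2(X,F)$. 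Adjointability then follows from the explicit pointwise formula and the symmetry of $P_X$, $\DD$ and $dD(x)$ (the last inherited from symmetry of $D_x$).

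The principal obstacle will be making rigorous sense of the pointwise resolvent identity at the Hilbert $C^*$-module level, i.e.\ showing that $\zeta_\mu$ and $P_X\zeta_\mu$ genuinely lie in $\Dom(P_X)$ and $\Dom(\DD)$ respectively, in the sense dictated by the $1\otimes_d P_X$ formalism of Remark \ref{remark 2}, and not merely in a weak distributional sense. This will require the uniformly bounded weak derivative and uniform graph-norm equivalence hypotheses of Theorem \ref{propindtheorem} to be used simultaneously, and will likely proceed by a local argument on $X$ via a partition of unity that reduces to coordinate charts in which $1\otimes_d P_X$ is explicitly computable on the relevant elementary tensors.
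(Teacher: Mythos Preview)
Your proposal is correct and follows essentially the same route as the paper: the same core $\cs\otimes\Gamma_c^\infty(X,F)$, the same commutator formula $[P_X,\DD](\DD-i\mu)^{-1}=\sigma_{P_X}\cdot dD(\cdot)\cdot(\DD-i\mu)^{-1}$, and the same three uniform bounds (finite propagation speed of $P_X$, Lemma \ref{lemma e0 weak derivatives}, and Lemma \ref{lem UEGN}) to conclude boundedness. The only difference is packaging: where you spell out the resolvent identity and the domain verification for condition (b) directly, the paper instead observes that Lemma \ref{difl2dife0} and Lemma \ref{lem UEGN} place us exactly in the setting of \cite[Lemma 8.5 and Theorem 8.6]{kl1} restricted to the submodule $\widetilde{\mathcal{S}}$, and cites those results for the domain checks and the commutator formula---so the ``principal obstacle'' you flag is handled by reference rather than by the partition-of-unity argument you sketch.
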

\begin{proof}
The submodule $\widetilde{\mathcal{S}}:=\cs\otimes \Gamma_c^{\infty}(X,F)\subset \ee^0(E)\otimes L^2(X,F)$  can be seen to define a core for $P_X$ on $\ee^0(E)\otimes L^2(X,F)$. By Lemma \ref{lem UEGN}, the induced family $\{D_x\colon \ee^1(E)\to \ee^0(E)\}_{x\in X}$ has uniformly equivalent graph norms on $\ee^0(E)$.	
	We know from Lemma \ref{difl2dife0} that the map $x\mapsto \langle D_x s_1,s_2\rangle_{\ee^0}$ is differentiable for each $s_1, s_2\in \cs$, so that the argument in the proof of \cite[Lemma 8.5]{kl1} can be replicated on the submodule $\widetilde{\mathcal{S}}\simeq \Gamma_c^{\infty}(X,\Gamma_c^{\infty}(M,E)\otimes F)\subset L^2(X,\ee^0(E)\otimes F)$. Namely, for every $\mu\in\re\backslash\{0\}$, condition \ref{cond2wdb} is satisfied, and the operator $[P_X,\DD](\DD-i\mu)^{-1}\colon \widetilde{\mathcal{S}}\to \ee^0(E)\otimes L^2(X,F)$ is given by (cf. \cite[Theorem 8.6]{kl1})
	\begin{equation} \label{eq comm PX D}
		[P_X,\DD](\DD-i\mu)^{-1}=\sigma_{P_X}\cdot d(D(\cdot))\cdot(\DD-i\mu)^{-1}.
	\end{equation}
	One can then see that operator (\ref{eq comm PX D}) extends to a bounded operator on $\ee^0(E)\otimes L^2(X,F)$. Indeed, the operator $d(D(\cdot))\colon \ee^1(E)\otimes L^2(X, F)\to \ee^0(E)\otimes L^2(X, T^*X\otimes F)$ is bounded by Lemma \ref{lemma weak der local}, and the principal symbol $\sigma_{P_X}\colon \ee^0(E)\otimes  L^2(X, T^*X\otimes F)\to\ee^0(E)\otimes  L^2(X,F)$ is bounded because $P_X$ has finite propagation speed (cf. \cite[Proposition 8.2]{kl1}).
\end{proof}

\begin{lemma}\label{lemmaunbktriple}
	Let $P_X$ and $\DF$ be as in Theorem \ref{propindtheorem}. Then there exists a positive, smooth function $\psi\in C^{\infty}(X)$ vanishing at infinity such that the product operator $\widetilde{P}_{D'}$ on $(\ee^0(E)\otimes L^2(X,F))^{\oplus 2}$ is regular self-adjoint on the domain $(\Dom(\DD')\cap\Dom(P_X))^{\oplus 2}$. Moreover, the triple $(\com, (\ee^0(E)\otimes L^2(X,F))^{\oplus 2},\widetilde{P}_{D'})$ defines an unbounded Kasparov $(\com,\CS(G))$-cycle.
\end{lemma}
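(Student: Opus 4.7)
The plan is to apply a Kucerovsky--Lesch-type criterion, in the spirit of \cite[Theorem 7.10]{KL2} and \cite[Theorem 4.3]{koen}, to the anticommuting pair $P_X$ and $\DD'$ on $\ee^0(E)\otimes L^2(X,F)$. Once the hypotheses of Definition \ref{klass7.1} (with $P = P_X$ and $S = \DD'$) are verified, regular self-adjointness of $\widetilde{P}_{D'}$ on $(\Dom(\DD')\cap\Dom(P_X))^{\oplus 2}$ follows, and the unbounded Kasparov cycle property reduces to compactness of $(1+\widetilde{P}_{D'}^2)^{-1}$, which in turn follows from the compact resolvents of $\DD'$ supplied by Proposition \ref{a123}.

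\textbf{Choice of $\psi$.} I would fix $\psi\in C^\infty(X)\cap C_0(X)$ positive with $\psi|_K = 1$ and $\|d\log\psi\|_{L^\infty(X,T^*X)}<\infty$. A concrete construction is $\psi(x) := h(d_X(x,K))$, where $h\colon[0,\infty)\to (0,1]$ is smooth, non-increasing, equal to $1$ near $0$, tends to $0$ at infinity, and has $|h'|/h$ uniformly bounded (a smoothed $e^{-\varepsilon t}$ works). The bounded logarithmic derivative is the essential property, used below to absorb the singular prefactor $\psi^{-1}$ arising from differentiating $\DD' = \psi^{-1}\DD$.

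\textbf{Commutator estimate.} Since $\psi^{-1}$ depends only on $X$, it commutes with $\DD$, and on the core $\widetilde{\mathcal{S}} = \cs\otimes\Gamma_c^\infty(X,F)$ one obtains
\[
[P_X,\DD'] = [P_X,\psi^{-1}]\DD + \psi^{-1}[P_X,\DD] = -\sigma_{P_X}(d\log\psi)\,\DD' + \psi^{-1}\sigma_{P_X}(dD(\cdot)),
\]
using \eqref{eq comm PX D} from Lemma \ref{welldefinedbounded} for the second summand together with the identity $\psi^{-2}d\psi = \psi^{-1}d\log\psi$. Multiplying by $(\DD'\pm i\mu)^{-1}$, the first summand is bounded by $\|\sigma_{P_X}\|_\infty\cdot\|d\log\psi\|_\infty\cdot\|\DD'(\DD'\pm i\mu)^{-1}\|$. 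For the second, I would use the pointwise identity $\psi^{-1}(\DD'\pm i\mu)^{-1} = (\DD\pm i\mu\psi)^{-1}$, which is bounded uniformly in $\mu$ by Lemma \ref{a3lemma} outside $K$ and by regular self-adjointness of $\DD$ where $\psi=1$; Lemma \ref{lemma e0 weak derivatives} provides the adjointability and uniform bound on $dD(\cdot)$. Together these verify Definition \ref{klass7.1}.

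\textbf{Cycle property and main obstacle.} Regular self-adjointness of $\widetilde{P}_{D'}$ then follows from the cited criterion. For the Kasparov cycle property, compactness of $(\DD'\pm i)^{-1}$ on $C_0(X,\ee^0(E))$ (Proposition \ref{a123}), combined with the identification of Remark \ref{remark 2} and the relative boundedness of $[P_X,\DD']$ with respect to $\DD'$ just established, yields compactness of $(1+\widetilde{P}_{D'}^2)^{-1}$ as an element of $\mathcal{K}((\ee^0(E)\otimes L^2(X,F))^{\oplus 2})$ over $\CS(G)$. The main obstacle I expect is the commutator term $[P_X,\psi^{-1}]\DD$, which at first sight contains an unbounded factor $\psi^{-2}$; the resolution, noted above, is to factor it as $-\sigma_{P_X}(d\log\psi)\cdot\psi^{-1}\DD = -\sigma_{P_X}(d\log\psi)\cdot\DD'$, absorbing the singularity into the bounded operator $\DD'(\DD'\pm i\mu)^{-1}$ precisely via the bounded logarithmic derivative of $\psi$.
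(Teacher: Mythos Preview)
Your proposal is correct and follows essentially the same route as the paper: decompose $[P_X,\DD']$ via the Leibniz rule, bound each piece using the chosen decay condition on $\psi$ together with Lemma \ref{welldefinedbounded} and Lemma \ref{lemma e0 weak derivatives}, invoke \cite[Theorem 7.10]{KL2} for regular self-adjointness, and then use Proposition \ref{a123} (via \cite[Proposition 4.1]{koen}) for compact resolvents. The only cosmetic difference is that you impose the weaker condition $\|d\log\psi\|_\infty<\infty$ and absorb the first commutator term directly into $\DD'(\DD'\pm i\mu)^{-1}$, whereas the paper imposes $\|d\psi^{-1}\|_\infty<\infty$ (citing \cite[Lemma 8.10]{kl1}) and appeals to \cite[Lemma 5.7]{koen} for the estimates; the two are equivalent after the identity $d\psi^{-1}\cdot\psi=-d\log\psi$.
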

\begin{proof}
By \cite[Lemma 8.10]{kl1}, there exists a smooth function $\psi\in C^{\infty}(X)$ vanishing at infinity such that $0<\psi(x)\leq 1$ for every $x\in X$, $\psi|_K=1$ and $\|d\psi^{-1}\|_{\infty}<\infty$. It follows then that $[P_X,\DD'](\DD'\pm i\mu)^{-1}$ is well-defined and bounded. Indeed, the estimates in \cite[Lemma 5.7]{koen}, together with the equality
\begin{equation*}
	[P_X,\psi^{-1}\DD]=\sigma_{P_X}(d\psi^{-1})\DD+\psi^{-1}[P_X,\DD],
\end{equation*}
show that the operator $[P_X,\DD'](\DD'\pm i\mu)$ on the submodule $\widetilde{\mathcal{S}}=\Gamma_c^{\infty}(M,E)\otimes\Gamma_c^{\infty}(X,F)$ extends to a bounded operator, for every $\mu\in\re\backslash\{0\}$. The fact that $\widetilde{P}_{D'}$ is regular self-adjoint then follows from \cite[Theorem 7.10]{KL2}. Since $\DD'$ has compact resolvents in $C_0(X,\ee^0(E))$ (cf. Proposition \ref{a123}), we have by \cite[Proposition 4.1]{koen} that $\widetilde{P}_{D'}$ also has compact resolvents on $(\ee^0(E)\otimes L^2(X,F))^{\oplus 2}$ and, hence, that it defines an unbounded Kasparov $(\com,\CS(G))$-cycle, as desired. 
\end{proof}	

\begin{proof}[Proof of Theorem \ref{propindtheorem}]
By Lemma \ref{lemmaunbktriple} we can take $\chi(x)=x(x^2+1)^{-1/2}$ as normalising function for $\widetilde{P}_{D'}$, so that it defines a class
\begin{equation*}
	[\widetilde{P}_{D'}]\in K\!K^0(\com,\CS(G)).
\end{equation*}
We conclude that the operator $P_X-i\DD'$ is Fredholm and, hence, that it defines a class
\begin{equation*}
	\Index(P_X-i\DD')\in K_0(\CS(G)),
\end{equation*}
given by the image of $[\widetilde{P}_{D'}]$ under the isomorphism $K\!K^0(\com,\CS(G))\simeq K_0(\CS(G))$.
The result now follows precisely as in \cite[Proposition 5.10]{koen}, where one shows that the three conditions in Kucerovski's Theorem (\cite[Theorem 13]{kuce}) hold.
\end{proof}

\section{Localised Indices, $\rho$- and $\eta$-Invariants}\label{seclocind}

In order to prove Theorems \ref{thm eta} and \ref{theoremmain}, we relate the Fredholm index from Subsection \ref{subsecfredind} to other constructions of equivariant indices for group actions that are not necessarily cocompact. We apply these arguments to the action by $G$ on $M\times X$. 

To this end, we consider a Riemannian manifold $N$ equipped with a proper, isometric action by a locally compact, unimodular group $G$, and an equivariant $\mathbb{Z}_2$-graded Hermitian vector bundle $S \to N$ with the same structure and properties as $M$ and $E$  at the start of Section \ref{secpreliminaries}, respectively. As in Subsection \ref{subsecgsobolevmod}, we consider a $G$-equivariant, symmetric, elliptic, first-order differential operator $P$ on $S$ which is odd with respect to the grading on $S$, and consider its Sobolev modules $\ee^k(S)$. We assume furthermore that $P$ has finite propagation speed, so that the induced operator $P$ on 
$\ee^0(S)$ with domain $\ee^1(S)$ is regular self-adjoint (\cite[Proposition 5.5]{haoguo}).

\subsection{The Localised $G$-Index}\label{subseclocalgind}

The operator $P$ is said to be \textit{$G$-invertible at infinity} if there exists a non-negative, smooth, $G$-invariant and cocompactly supported function $f$ on $N$ such that $P^2+f\in\mathcal{L}(\ee^2(S),\ee^0(S))$ is invertible, and the inverse satisfies $(P^2+f)^{-1}\in \mathcal{L}(\ee^0(S),\ee^2(S))$. Following \cite[\S4]{haoguo}, given an operator $P$ which is $G$-invertible at infinity, one may define the operator $F:=P(P^2+f)^{-1/2}$ on $\ee^0(S)$, which is given by the integral formula (\cite[Proposition 4.5]{haoguo})
\begin{equation}\label{eqintegralformula}
	F=\frac{2}{\pi}\int_0^{\infty} P(P^2+f+\lambda^2)^{-1}\,d\lambda.
\end{equation}
One then shows that $F^2-1\in \mathcal{K}(\ee^0(S))$, so that $F$ is invertible modulo compacts (\cite[Proposition 4.13]{haoguo}). The short exact sequence
\begin{equation*}
	0\longrightarrow \mathcal{K}(\ee^0(S))\longrightarrow \mathcal{L}(\ee^0(S))\longrightarrow\mathcal{L}(\ee^0(S))/\mathcal{K}(\ee^0(S))\longrightarrow 0
\end{equation*}
induces a six-term exact sequence in $K$-theory, where we denote the relevant boundary maps by $\partial\colon K_i(\mathcal{L}(\ee^0(S))/\mathcal{K}(\ee^0(S))\to K_{i-1}(\mathcal{K}(\ee^0(S))$. Since $F$ is invertible modulo compacts,
it defines a class $[F]\in K_1(\mathcal{L}(\ee^0(S))/\mathcal{K}(\ee^0(S)))$. 

\begin{definition}
The \textit{localised $G$-index} of $P$ is the class
\begin{equation}\label{eq loc g ind}
	\Ind_G(P):=\partial[F]\in K_0(\CS(G))\simeq K_0(\mathcal{K}(\ee^0(S)).
\end{equation}
\end{definition}

The operator $F$ can be shown to be self-adjoint modulo compacts, so that the triple $(\ee^0(S),F,1_{\com})$ defines a Kasparov $(\com,\CS(G))$-cycle and, hence, a class
\begin{equation}\label{eqcalliaskkclass}
	[(\ee^0(S),F,1_{\com})]\in K\!K^0(\com,\CS(G)).
\end{equation}
By \cite[Theorem 4.19]{haoguo}, the class (\ref{eqcalliaskkclass}) is independent of the choice of function $f$ and maps to $\Ind_G(P)$ through the isomorphism $K\!K^0(\com,\CS(G))\simeq K_0(\CS(G))$.

We assume until the end of this subsection that the operator $P$ satisfies
\begin{equation*}
	P^2=\widetilde{\Delta}+A,
\end{equation*}
where $\widetilde{\Delta}$ is a $G$-equivariant differential operator on $S$ which is $\ee^0(S)$-positive, and $A\in\End(S)^G$ is a $G$-equivariant endomorphism with $A\geq c>0$ fiberwise outside a cocompact set $Y\subset N$. We then prove that $P$ is $G$-invertible at infinity. In order to do that we follow \cite[\S5]{haoguo}, where Guo shows that a $G$-Callias operator $D+\Phi$ is $G$-invertible at infinity. The main ingredient of his proof is the assumption that the operator $D\Phi+\Phi D+\Phi^2$ is a $G$-equivariant, $L^2$-positive endomorphism outside a cocompact set, which in our setting corresponds to the positivity of the endomorphism $A$ outside of $Y$. 
\begin{lemma}\label{positivel2diracschro}
	There exists a $G$-invariant, non-negative, smooth, cocompactly supported function $f\in C^{\infty}(N)^G$ and a constant $r>0$ such that $A+f\geq r>0$ on $L^2(N,S)$.\end{lemma}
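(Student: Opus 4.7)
The plan is to reduce the claim to a fiberwise lower bound on $A+f$ and then construct $f$ by orbital averaging. First I would exploit the $G$-equivariance of $A$: since $G$ acts by unitary isomorphisms on the fibers of $S$, the smallest-eigenvalue function $m\colon N\to\re$, $m(x) := \inf\sigma(A(x))$, is continuous and $G$-invariant, and the set $\{x\in N : A(x) < c\}$ is $G$-invariant and contained in $Y$. Replacing $Y$ by this subset if necessary, we may assume $Y$ is itself $G$-invariant, so that $Y/G$ is compact. Then $m$ descends to a continuous function on $Y/G$ and is bounded below on $Y$ by some constant $m_0\in\re$ (possibly negative).

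Next, I would produce $f$ by averaging a compactly supported bump function. Pick a compact set $K\subset N$ with $G\cdot K\supset Y$, which exists by cocompactness, and choose $\phi\in C_c^\infty(N)$ non-negative with $\phi>0$ on $K$. By properness of the action, the orbital average
\[
\widetilde{\phi}(x) := \int_G \phi(g^{-1}x)\,dg
\]
converges absolutely; it is smooth (differentiation under the integral sign is justified because only a compact subset of $G$ contributes at each $x$), $G$-invariant, non-negative, and supported in the cocompact set $G\cdot\Supp(\phi)$. Moreover $\widetilde{\phi}$ is strictly positive on $G\cdot K\supset Y$, and since $Y/G$ is compact, it is bounded below on $Y$ by some $\delta>0$.

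Finally, I would choose $C>0$ with $m_0+C\delta>0$ and set $f := C\widetilde{\phi}$. To verify the positivity estimate it suffices to check $A+f\geq r$ fiberwise for some $r>0$: on $N\setminus Y$ one has $A(x)+f(x)\geq c$, while on $Y$ one has $A(x)+f(x)\geq m_0+C\delta$. Taking $r := \min\{c,\,m_0+C\delta\}>0$ yields the fiberwise lower bound, and hence $A+f\geq r$ as a multiplication operator on $L^2(N,S)$.

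I do not anticipate a genuine obstacle; the argument is essentially bookkeeping once one arranges that $Y$ may be taken $G$-invariant. The only real input from the setting is properness of the action, which is what makes the orbital average both smooth and cocompactly supported.
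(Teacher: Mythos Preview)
Your proof is correct and follows essentially the same route as the paper's: bound $A$ below on a compact $K$ with $G\cdot K\supset Y$, orbit-average a compactly supported bump function to produce a $G$-invariant $f$, and verify the fiberwise estimate $A+f\geq r$. One small point: in the paper's setup $Y$ is already $G$-invariant (this is what ``cocompact'' means here), so your replacement step is unnecessary---which is just as well, since the open set $\{A<c\}$ need not itself have compact quotient.
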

\begin{proof}
	Let $K\subset Y$ be a compact set such that $G\cdot K=Y$. Then $A$ is bounded from below on $K$ by some $\widetilde{a}\in \re$. Let $a>|\widetilde{a}|$ and let $\varphi$ be a non-negative, compactly supported function on $N$ such that $\varphi|_K=a$. Then the function
	\begin{equation*}
		f(z):=\int_G\varphi(gz)\,dg,
	\end{equation*}
	is $G$-invariant, non-negative and cocompactly supported. Also, there holds $A+f\geq \widetilde{a}+a>0$ on $Y$, by construction, and $A+f\geq c>0$ on $N\backslash Y$, by assumption. We conclude that $A+f\geq r:= \min\{\widetilde{a}+a,c\}>0$ on the whole space $N$.
	\end{proof}

\begin{lemma}\label{positivepd}
	There exists a $G$-invariant, non-negative, smooth, cocompactly supported function $f\in C^{\infty}(N)^G$ and a constant $r>0$ such that $P^2+f\geq r>0$ on $\ee^0(S)$.
\end{lemma}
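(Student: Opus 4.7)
The plan is to combine the fiberwise positivity produced by Lemma \ref{positivel2diracschro} with the transfer-of-positivity result of Lemma \ref{positivitynoncocomp}, and then use the assumed $\ee^0(S)$-positivity of $\widetilde{\Delta}$ to absorb the remaining term in the decomposition $P^2=\widetilde{\Delta}+A$.

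First, I would invoke Lemma \ref{positivel2diracschro} to obtain a $G$-invariant, non-negative, smooth, cocompactly supported function $f\in C^{\infty}(N)^G$ together with a constant $r>0$ such that the endomorphism $A+f\in\End(S)^G$ satisfies $A+f\geq r>0$ fiberwise on $S$ (the proof of that lemma produces the estimate fiberwise: bounded below by $\widetilde{a}+a>0$ on $Y$ by construction of the cutoff $\varphi$, and by $c>0$ on $N\setminus Y$ by hypothesis, so one may take $r:=\min\{\widetilde a+a,\,c\}$).

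Next, since $A+f-r$ is still a $G$-equivariant, fiberwise non-negative endomorphism of $S$, Lemma \ref{positivitynoncocomp} upgrades the fiberwise bound to an operator-theoretic one on the Hilbert $\CS(G)$-module $\ee^0(S)$, namely
\[
\langle (A+f-r)s,s\rangle_{\ee^0(S)}\geq 0\quad \text{in }\CS(G),\qquad s\in\Gamma_c^{\infty}(S),
\]
i.e.\ $A+f\geq r$ on $\ee^0(S)$. Combining this with the standing assumption that $\widetilde{\Delta}$ is $\ee^0(S)$-positive, the decomposition $P^2=\widetilde{\Delta}+A$ yields
\[
P^2+f=\widetilde{\Delta}+(A+f)\geq 0+r=r>0
\]
on $\ee^0(S)$, which is the required conclusion.

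There is no real obstacle here: the two previous lemmas (and the hypothesis on $\widetilde{\Delta}$) have been tailored precisely to make this a short bookkeeping argument. The only point requiring a little care is to keep straight the distinction between fiberwise positivity of the endomorphism $A+f$ and operator positivity on the Hilbert module $\ee^0(S)$, which is exactly the gap that Lemma \ref{positivitynoncocomp} is designed to bridge in the non-cocompact setting.
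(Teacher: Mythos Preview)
Your proposal is correct and follows exactly the same route as the paper's proof: apply Lemma~\ref{positivel2diracschro} to get the fiberwise bound $A+f\geq r$, transfer it to $\ee^0(S)$ via Lemma~\ref{positivitynoncocomp}, and add the $\ee^0(S)$-positive term $\widetilde{\Delta}$. The paper condenses this into a single sentence, but the content is identical.
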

\begin{proof}
 The result follows immediately from the positivity of $\widetilde{\Delta}$ on  $\ee^0(S)$ and Lemmas \ref{positivel2diracschro} and \ref{positivitynoncocomp}, since then $P^2+f\geq A +f\geq r>0$ on $\ee^0(S)$. 
\end{proof}
\begin{proposition}\label{propginvertatinf}
	The operator $P$ on $\ee^0(S)$ is $G$-invertible at infinity.
\end{proposition}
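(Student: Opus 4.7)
The plan is to use Lemma \ref{positivepd} to produce the required function $f$, and then verify the two conditions in the definition of $G$-invertibility at infinity: namely, that $P^2+f$ is invertible as an operator $\ee^2(S) \to \ee^0(S)$, and that its inverse lies in $\mathcal{L}(\ee^0(S), \ee^2(S))$.

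First I would fix $f \in C^\infty(N)^G$ non-negative and cocompactly supported, together with a constant $r > 0$, as supplied by Lemma \ref{positivepd}, so that $P^2 + f \geq r > 0$ on $\ee^0(S)$. Since $P$ is regular self-adjoint on $\ee^0(S)$ with domain $\ee^1(S)$, its square $P^2$ is regular self-adjoint with domain $\ee^2(S)$, and multiplication by the bounded real function $f$ is a bounded self-adjoint operator on $\ee^0(S)$. Hence $P^2 + f$ is regular self-adjoint on $\ee^0(S)$ with domain $\ee^2(S)$. The lower bound $P^2+f \geq r$, combined with \cite[Proposition 1.21]{ebert}, gives that $P^2+f$ is invertible as a map $\ee^2(S) \to \ee^0(S)$, and that its inverse is a bounded, adjointable operator on $\ee^0(S)$ with operator norm at most $r^{-1}$.

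It remains to upgrade the inverse to a bounded operator $\ee^0(S) \to \ee^2(S)$. Given $\eta \in \ee^0(S)$, set $\xi := (P^2+f)^{-1}\eta \in \ee^2(S)$. Since the $\ee^2(S)$-norm is controlled by $\|\xi\|_{\ee^0} + \|P\xi\|_{\ee^0} + \|P^2\xi\|_{\ee^0}$, I would estimate the three terms separately: the first satisfies $\|\xi\|_{\ee^0} \leq r^{-1}\|\eta\|_{\ee^0}$; the identity $P^2\xi = \eta - f\xi$ yields $\|P^2\xi\|_{\ee^0} \leq (1 + \|f\|_\infty r^{-1})\|\eta\|_{\ee^0}$; and for the middle term, the Hilbert-module identity $\langle P\xi, P\xi\rangle_{\ee^0} = \langle P^2\xi, \xi\rangle_{\ee^0}$ combined with the Cauchy--Schwarz inequality for $\CS(G)$-valued inner products delivers $\|P\xi\|_{\ee^0}^2 \leq \|P^2\xi\|_{\ee^0} \cdot \|\xi\|_{\ee^0}$. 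Together these bounds show $(P^2+f)^{-1} \in \mathcal{L}(\ee^0(S), \ee^2(S))$, with adjointability inherited from adjointability on $\ee^0(S)$ and the module-theoretic identification of the range.

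The main technical point is ensuring that the positivity bound $P^2+f \geq r$ on $\ee^0(S)$ transfers cleanly to the Hilbert $C^*$-module setting via \cite[Proposition 1.21]{ebert}, producing an adjointable inverse whose range is controlled in $\ee^2(S)$ rather than merely in $\ee^0(S)$. The Cauchy--Schwarz step for the middle norm is similarly delicate because the inner products are $\CS(G)$-valued, but it follows from the standard estimate $\|\langle x, y\rangle_{\ee^0}\|_{\CS(G)} \leq \|x\|_{\ee^0}\|y\|_{\ee^0}$ applied to $x = P^2\xi$ and $y = \xi$. Once these are in place, the rest of the verification is routine.
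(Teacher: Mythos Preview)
Your proposal is correct and takes the same approach as the paper: both invoke Lemma~\ref{positivepd} to produce the function $f$ and the lower bound $P^2+f\geq r>0$, after which verifying $G$-invertibility at infinity is a matter of Hilbert-module functional analysis. The paper simply defers that verification to \cite[\S5.3]{haoguo}, whereas you write it out directly; your norm estimates for $(P^2+f)^{-1}\colon\ee^0(S)\to\ee^2(S)$ via $\|\xi\|_{\ee^0}$, $\|P^2\xi\|_{\ee^0}$, and the Cauchy--Schwarz bound on $\|P\xi\|_{\ee^0}$ are correct. One technical point you glide over: the assertion that $P^2$ is regular self-adjoint on $\ee^0(S)$ \emph{with domain exactly $\ee^2(S)$} requires that $\cs$ be a core for $P^2$, not merely for $P$; this is not automatic from regular self-adjointness of $P$ alone but follows from the finite-propagation-speed hypothesis (via a Chernoff-type argument), and is among the details handled in \cite{haoguo}.
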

\begin{proof}
The result follows from Lemma \ref{positivepd} and by repeating the arguments in \cite[\S5.3]{haoguo}.
\end{proof}

\subsection{The Localised Equivariant Coarse Index}\label{subseclocaleqcoarind}

Let $S \to N$ be as at the start of this section.
 Suppose, for now, that $G = \Gamma$ is a finitely generated discrete group acting freely on $N$. 
Recall Definition \ref{def D star}.
\begin{definition}
An operator $T  \in \B(L^2(S) \otimes l^2(\bN))$
is \emph{supported near} $K$ if there is an $R>0$   such that for all $f \in C_0(N)$ whose support is at least a distance $R$ from $K$, we have $f \circ T = T  \circ f = 0$.

The equivariant Roe algebra $C^*(N; K)^{\Gamma} \subset C^*(N)^{\Gamma}$ of $M$, localised at a $\Gamma$-invariant, closed subset $K \subset M$, is the closure in $\B(L^2(S) \otimes l^2(\bN))$ of the subalgebra of $\Gamma$-invariant, locally compact operators with finite propagation that are supported near $K$.
\end{definition}

We assume until the end of this subsection that $P$ is a $\Gamma$-equivariant Dirac-type operator on $S$, and that there exists a closed subset $K \subset N$ and $c>0$ such that, for all $s \in \Gamma_c^{\infty}(S)$ supported outside $K$, we have $\|Ps\|_{L^2(S)} \geq c \|s\|_{L^2(S)}$. 
By Lemmas 2.1 and 2.3 in \cite{Roe16}, there is a normalising function $\chi$ such that $\chi(P) \in D^*(N)^{\Gamma}$ and $\chi(P)^2-1 \in C^*(N; K)^{\Gamma}$.

Since $P$ is odd with respect to some $\Gamma$-invariant $\bZ/2\bZ$ grading on $S$, the operator $\chi(P)$ is also odd-graded, because $\chi$ is odd.
Let $\chi(P)_+$ be the restriction to even-graded sections. There is an isometry $U \colon L^2(S^-)\otimes l^2(\bN)\to L^2(S^-) \otimes l^2(\bN)$ that lies in $D^*(N)^{\Gamma}$; see \cite[Lemma 7.7]{HR95}.
Then the class of $U \circ \chi(P)_+ \otimes 1_{l^2(\bN)}$ in $D^*(N)^{\Gamma}/C^*(N; K)^{\Gamma}$ is invertible. Hence, this operator defines a class
\[
[P] \in K_1(D^*(N)^{\Gamma}/C^*(N; K)^{\Gamma}).
\]
Applying the boundary map in the six-term exact sequence associated to the ideal $C^*(N; K)^{\Gamma} \subset D^*(N)^{\Gamma}$, we obtain
\begeq{eq ind P Roe}
\partial[P] \in K_0(C^*(N; K)^{\Gamma}))  = K_0(C^*(K)^{\Gamma} ) = K_0(C^*_r(\Gamma)). 
\eneq
For the first equality of $K$-theory groups, see Lemma 1 in Section 5 of \cite{HRY93}. For the second, which relies on compactness of $K/\Gamma$, see 
 \cite[Theorem 5.3.2]{WillettYu}. 
\begin{definition}\label{def roe index}
The \emph{localised equivariant coarse index} of $P$,
\[
\Index_{\Gamma}^{\loc}(P) \in K_0(C^*_r(\Gamma)),
\]
 is the class \eqref{eq ind P Roe}.
\end{definition}

We return to the more general setting where $G$ is not necessarily discrete, and its action on $N$ is not necessarily free.
Definition \ref{def roe index} was generalised to this setting, in Definition 3.4 in \cite{guohochsvarg}. The versions of the algebras $D^*(N)^G$ and $C^*(N; K)^G$ used in this definition are slightly different from the versions used in the case of free actions by discrete groups. We refer to Sections 2 and 3 of \cite{guohochsvarg} for details.
This construction yields an index
\begin{equation}\label{eqloceqcoarseind}
	\Index_G^{\text{loc}}(P)\in K_0(\CS_r(G)). 
\end{equation}

We now note that the three notions of equivariant index for $P$ coincide, namely, the Fredholm index (\ref{eqindexdef}), the localised $G$-index (\ref{eq loc g ind}) and the localised equivariant coarse index (\ref{eqloceqcoarseind}), whenever they exist. Indeed, by \cite[Proposition 6.5]{guohochsvarg}, if $P$ is Fredholm on $\ee^0(S)$, then its Fredholm index coincides with its localised equivariant coarse index:

	\begin{equation}\label{eqfredindeqlocind}
	\Index^{\rm{loc}}_G(P)=\Index(P)\in K_0(\CS_r(G)).
	\end{equation}
For the remaining equality, we adapt a proof of the $G$-Callias operator case to our setting, as follows:
\begin{proposition}\label{guo2021}
Suppose $P$ is $G$-invertible at infinity. Then
\begin{equation*}\Index_G^{\rm{loc}}(P)= \Ind_G(P)\in K_0(\CS_r(G)).	
\end{equation*}
\end{proposition}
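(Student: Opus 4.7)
The plan is to recognise both indices as images of one and the same class under naturally compatible boundary maps, adapting the strategy used for $G$-Callias operators in \cite{guohochsvarg}.

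First, I would reinterpret $\Ind_G(P)$ in terms of a bounded transform obtained from a normalising function. Fix $\chi(x) = x/\sqrt{x^2+1}$. The operator $F = P(P^2+f)^{-1/2}$ from \eqref{eqintegralformula} and the operator $\chi(P)$ both lie in $\mathcal{L}(\ee^0(S))$ and agree up to an element of $\mathcal{K}(\ee^0(S))$. Indeed, subtracting their integral representations and using the second resolvent identity yields a remainder that involves a factor of the multiplication operator by $f$. Since $f$ is $G$-invariant and cocompactly supported, Proposition \ref{lemmaeqrellich}, together with the bound $P^2+f \geq r>0$, ensures that this remainder is compact. Hence $[(\ee^0(S), F, 1_{\com})] = [(\ee^0(S), \chi(P), 1_{\com})]$ in $KK^0(\com,\CS(G))$, so that applying the boundary map of the sequence $0\to \mathcal{K}(\ee^0(S))\to \mathcal{L}(\ee^0(S))\to \mathcal{L}(\ee^0(S))/\mathcal{K}(\ee^0(S))\to 0$ to $[\chi(P)]$ reproduces $\Ind_G(P)$.

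Second, I would realise the same $\chi(P)$ on the Roe side. Choose a $G$-cocompact closed subset $K \subset N$ containing $\Supp(f)$. The spectral estimate $P^2 \geq A \geq c >0$ off of $K$ shows that $P$ satisfies the hypothesis used to define $\Index_G^{\loc}(P)$, and by the standard arguments (cf.\ \cite{Roe16} and Sections 2--3 of \cite{guohochsvarg}) the operator $\chi(P)\otimes 1_{l^2(\bN)}$ lies in $D^*(N)^G$ with $\chi(P)^2\otimes 1 - 1 \in C^*(N;K)^G$. Thus the boundary map associated with $0\to C^*(N;K)^G\to D^*(N)^G\to D^*(N)^G/C^*(N;K)^G\to 0$ sends the class of $\chi(P)\otimes 1$ to $\Index_G^{\loc}(P)$.

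Third, I would exhibit a $*$-homomorphism $\Phi\colon \mathcal{L}(\ee^0(S)) \to D^*(N)^G$, constructed from the Hilbert-module--to--Hilbert-space assembly of \cite{guohochsvarg}, with the properties that $\Phi(\mathcal{K}(\ee^0(S))) \subset C^*(N;K)^G$, that $\Phi$ carries $F$ to (an operator equivalent to) $\chi(P)\otimes 1_{l^2(\bN)}$ modulo $C^*(N;K)^G$, and that the induced map $K_0(\mathcal{K}(\ee^0(S))) \to K_0(C^*(N;K)^G)$ agrees with the canonical identifications of both groups with $K_0(\CS_r(G))$. Naturality of the boundary map applied to the resulting morphism of short exact sequences then yields $\Ind_G(P) = \Index_G^{\loc}(P)$ in $K_0(\CS_r(G))$.

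The main obstacle is the third step: building $\Phi$ and tracking that it matches the two bounded transforms and intertwines the canonical $K$-theory identifications. This amounts to transplanting the Callias-case construction of \cite{guohochsvarg} to an operator $P$ whose ``potential'' is replaced by the endomorphism $A$ appearing in $P^2 = \widetilde{\Delta} + A$; the key technical point is that $A\geq c>0$ outside a cocompact set suffices to run the locally-compact/finite-propagation estimates that underpin the assembly map, exactly as in the Callias case.
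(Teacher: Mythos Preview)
Your strategy diverges from the paper's and, as you yourself flag, the third step is where the difficulty concentrates. The problem is more serious than a technical transplant: a $*$-homomorphism $\Phi\colon \mathcal{L}(\ee^0(S)) \to D^*(N)^G$ of the kind you describe does not exist in general. Adjointable operators on $\ee^0(S)$ need be neither pseudolocal nor approximable by finite-propagation operators, so there is no algebra map from all of $\mathcal{L}(\ee^0(S))$ into $D^*(N)^G$. What is available is an identification at the level of the \emph{compact} operators, $\mathcal{K}(\ee^0(S))\simeq C^*(N)^G$ (and hence with $C^*(N;K)^G$ in the cocompact case), but $D^*(N)^G$ is not the multiplier algebra of $C^*(N;K)^G$, so this does not extend to give your $\Phi$. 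You could try to salvage the argument by restricting to a subalgebra generated by the specific operators $\chi(P)$, $F$, and $\mathcal{K}(\ee^0(S))$, and then checking by hand that this subalgebra maps into $D^*(N)^G$; but at that point the ``naturality of boundary maps'' framing loses its force and you are essentially computing directly with the operators anyway.

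The paper avoids this entirely by working with a homotopy of normalising functions. It introduces the family $b_s(x)=x/(|x|^{1/s}+1)^s$, which interpolates between the bounded transform $P_1=P(P^2+1)^{-1/2}$ at $s=1$ and a genuine normalising function $b$ (with $b(x)=\pm 1$ for $|x|\geq 1$) as $s\to 0$. Factoring $P_s=b_{s/2}(P)$ as $F\cdot (P^2+f)^{1/2}\psi_{s/2}(P)$ with $\psi_s\in C_0(\mathbb{R})$, one sees that each $P_s$ is invertible modulo $\mathcal{K}(\ee^0(S))$ because $F$ is and the second factor is genuinely invertible; hence $\partial[P_1]=\Ind_G(P)$. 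The uniform convergence $b_s\to b$ then provides a continuous path in $\mathcal{L}(\ee^0(S))/\mathcal{K}(\ee^0(S))$ from $P_1$ to $b(P)$, and the latter represents $\Index_G^{\rm loc}(P)$. This sidesteps the need for any comparison morphism between the Hilbert-module and Roe pictures beyond the compact level. Your first step (comparing $F$ and $\chi(P)$ modulo compacts via the resolvent identity) is in the right spirit and is essentially subsumed by this functional-calculus homotopy.
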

\begin{proof}
Following the proof of  \cite[Proposition 6.6]{Guo_2021}, let $b\in C_b(\re)$ be the function
\[ b(x):= \begin{cases} 
     -1 & x\leq -1, \\
     x & 0\leq x \leq 1, \\
     1 & x\geq 1,
   \end{cases}
\]
and, for each $s>0$, let $b_s\in C_b(\re)$ be defined by
\begin{equation*}
	b_s(x):=\frac{x}{(|x|^{1/s}+1)^s}.
\end{equation*}
It is immediate to see that $\lim_{s\to 0}\|b_s-b\|_{\infty}=0$. Define the operator
\begin{equation*}
	P_s:= b_{s/2}(P)
\end{equation*}
on $\ee^0(S)$. We note that
\begin{equation}\label{eqpsext}
	P_s=\frac{P}{(P^2+f)^{1/2}}(P^2+f)^{1/2}\psi_{s/2}(P),
\end{equation}
where $f$ comes from the definition of $G$-invertibility at infinity, and
\begin{equation*}
	\psi_s:=\left(x\mapsto \frac{1}{(|x|^{1/s}+1)^s}\right )\in C_0(\re).
\end{equation*}
The operator $P(P^2+f)^{-1/2}$ is invertible modulo $\mathcal{K}(\ee^0(S))$ by \cite[Proposition 4.13]{haoguo}, and thus so is $P_s$ for each $s>0$. In particular, for $s=1$, we see that the operator $P_1$ defines a class $[P_1]\in K_1(\mathcal{L}(\ee^0(S))/\mathcal{K}(\ee^0(S)))$. Since $(P^2+f)^{1/2}\psi_{s/2}(P)$ is invertible, one can see by (\ref{eqpsext}) that $\Ind_G(P)=\partial[P_1]\in K_0(\CS_r(G))$. Now, because $\lim_{s\to 0} \|P_s\oplus 1-b(P)\oplus 1\|=0$, it follows that $P_s$ is a continuous path of operators which are invertible modulo $\mathcal{K}(\ee^0(S))$, connecting $b(P)\oplus 1$ to $P_1$. This homotopy establishes the desired result.
\end{proof}

\subsection{The Spin Dirac Operator}\label{subsecspindiracop}

\begin{proof}[Proof of Proposition \ref{prop Spin SFG}]
The family \eqref{eq Dt Spin} satisfies conditions \ref{assump1} by assumption, and \ref{assump3} by the Lichnerowicz' formula. The map $t\mapsto D_t$ has uniformly bounded weak derivative (cf. \cite[\S 4.3.3]{van_den_Dungen_2016}). Hence, the family is norm-continuous by Remark \ref{diffimpliescont}. Since the family $\{D_t\}_{t\in\re}$ is locally constant outside of $K=[0,1]$, we see that the Assumptions \ref{assump1}-\ref{assump4} are thus satisfied. 
\end{proof}

Until the end of this subsection, we consider the setting of Subsection \ref{sec rho eta invar}. Form the exterior tensor product $S_0\boxtimes \com$ over $M\times \re$ and trivially extend the $G$-action to $\re$. By acting as the identity on the fibers in the direction of $\com\times\re\to\re$, we can see that $S_0\boxtimes \com\to M\times \re$ defines a $G$-equivariant Hermitian vector bundle. We consider the $\CS_r(G)$-Hilbert module $\ee^0(M\times \re, (S_0\boxtimes \com)^{\oplus 2})$ by completing the space of sections $\Gamma_c^{\infty}(M\times \re,(S_0\boxtimes \com)^{\oplus 2})$ with respect to the $\ee^0$-norm.  The product operator
\begin{equation*}
	\widetilde{P}_{D}:=
	\begin{pmatrix}
	0&-i\partial_t+i\DD\\-i\partial_t-i\DD&0
\end{pmatrix}
\end{equation*}
can be regarded as an operator on $\ee^0(M\times \re,(S_0\boxtimes \com)^{\oplus 2})\simeq (\ee^0(E)\otimes L^2(X,F))^{\oplus 2}$. We write
\begin{equation*}
	-i\widetilde{\partial_t}:=\begin{pmatrix}
		0&-i\partial_t\\-i\partial_t&0
	\end{pmatrix},\quad\quad 
	\widetilde{D}_{\bullet}:=\begin{pmatrix}
		0& i\DD\\
		-i\DD&0
	\end{pmatrix}.
\end{equation*}
Because of (\ref{eq weitzenbock}), we may write 
\begin{equation*}
	D_t^2=\Delta_t+\kappa_t,
\end{equation*}
where $\Delta_t:=(\tau_t^{-1}\circ(\nabla^{S_t})^*\nabla^{S_t}\circ\tau_t)$ and $\kappa_t:=\tau^{-1}_t\circ\slashed{\kappa}_t\circ\tau_t$.
Because the family $\{D_t\}_{t\in\re}$ is constant outside of $[0,1]$, the operators $-i\partial_t$ and $D_t$ commute for every $t\in \re\backslash[0,1]$. Hence, we may write \begin{equation}\label{eqsquareop}
 \widetilde{P}_{D}^2=(-i\widetilde{\partial}_t)^2+\widetilde{D}_{\bullet}^2=(-i\widetilde{\partial}_t)^2+\widetilde{\Delta}_\bullet+\widetilde{\kappa}_{\bullet},
 \end{equation}	
 outside of $M\times[0,1]$, where
	\begin{equation*}
	\widetilde{\Delta}_{\bullet}:=\begin{pmatrix}
		\Delta_{\bullet}&0\\ 0& \Delta_{\bullet}
	\end{pmatrix},\quad\quad 
	\widetilde{\kappa}_{\bullet}:=\begin{pmatrix}
		\kappa_{\bullet}&0\\
		0& \kappa_{\bullet}
	\end{pmatrix}. 
\end{equation*}
We note immediately that $(-i\widetilde{\partial}_t)^2 +\widetilde{\Delta}_\bullet$ is a $G$-equivariant differential operator which is positive on $\ee^0(M\times X,(E\boxtimes F)^{\oplus 2})$, and that $\widetilde{\kappa}_\bullet$ is a $G$-equivariant endomorphism of $(S_0\boxtimes \com)^{\oplus 2}$ such that there exists a constant $c$ such that $\widetilde{\kappa}_\bullet\geq c>0$ outside of the cocompact set $M\times K$. This is the case since $\widetilde{\kappa}_t$ is equal to either $\widetilde{\kappa}_0$ or $\widetilde{\kappa}_1$ for all $t\notin [0,1]$, and by the assumption that $g_0$ and $g_1$ have positive scalar curvature. It follows from Proposition \ref{propginvertatinf}, and the discussion above it, that $\widetilde{P}_{ D}$ is $G$-invertible at infinity. It thus defines a localised $G$-index
\begin{equation*}
	\Ind_G(\widetilde{P}_{ D})\in K_0(\CS_r(G)).
\end{equation*}
On the level of $K\!K$-theory, this index is given by the class 
\begin{equation*}
	[(\ee^0(M\times X,(S_0\boxtimes \com)^{\oplus 2}), \widetilde{P}_{D}(\widetilde{P}_{ D}^2+f)^{-1/2},1_{\com})]\in K\!K^0(\com,\CS(G)),
\end{equation*}
 where the function $f$ on $M\times X$ comes from Lemma \ref{positivepd}.

Let now $c\colon T^*(M\times \re)\to \End(\widetilde{S})$ and $\nabla^{\widetilde{S}}$ denote, respectively, the Clifford action and spin connection on the spinor bundle $\widetilde{S}$ over $M\times\re$. The spin Dirac operator is then defined by
\begin{equation*}
	\slashed{D}_{M\times\re}:=c\circ \nabla^{\widetilde{S}}.
\end{equation*}
 Using the isometries $\tau_t$ as in (\ref{eqisometry}) we can form an isometry
 \begin{equation}\label{eqtau}
 	\tau\colon (S_0\boxtimes \com)^{\oplus 2}\to \widetilde{S}
 \end{equation}
     over $M\times \re$. Indeed, fiberwisely we see that
\begin{equation*}
	(S_0\boxtimes\com)^{\oplus 2}_{(m,t)}=((S_0)_m\times\{t\})^{\oplus 2}\xrightarrow{\tau_t\times 1} ((S_t)_m\times \{t\})^{\oplus 2} =\widetilde{S}_{(m,t)},
\end{equation*}
where the first isomorphism is given by (\ref{eqisometry}) and the last equality by (\ref{eqrestriction}). Using the isomorphism $(\ee^0(M,S_0)\otimes L^2(\re,\com))^{\oplus 2}\simeq \ee^0(M\times \re,(S_0\boxtimes\com)^{\oplus 2})$, and by conjugating with the map (\ref{eqtau}), we can perceive both $\slashed{D}_{M\times\re}$ and $\widetilde{P}_{ D}$ as operators on $\ee^0(M\times \re,(S_0\boxtimes\com)^{\oplus 2})$. We write 
$$\widetilde{P}:=\tau^{-1}\circ \slashed{D}_{M\times\re}\circ\tau$$
 in what follows.

\begin{lemma}\label{lemmadifops}
 The difference
	\begin{equation*}
		A:=\widetilde{P}-\widetilde{P}_{D}
	\end{equation*}
	is a $G$-equivariant, cocompactly supported endomorphism of $(S_0\boxtimes \com)^{\oplus 2}$.
\end{lemma}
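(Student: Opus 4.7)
The plan is to invoke the standard warped-product decomposition of the spin Dirac operator on the cylinder $(M\times\re,dt^2+g_t)$ from \cite[\S 5]{bgm}, and then compare with the product-operator form $\widetilde{P}_D$.

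The key ingredient is the formula expressing $\slashed{D}_{M\times\re}$, through the splitting $\widetilde{S}|_{M_t}=S_t\oplus S_t$, as a combination of the normal derivative $\nabla^{\widetilde{S}}_{\partial_t}$, the slicewise spin Dirac operator $\slashed{D}_{M_t}$, and an additional zeroth-order term $R_t\in\End(\widetilde{S}|_{M_t})^G$ built from $\dot g_t$ (for instance, a constant multiple of $\operatorname{tr}_{g_t}(\dot g_t)$ paired with Clifford generators). Schematically,
\[
\slashed{D}_{M\times\re}=c(dt)\bigl(\nabla^{\widetilde{S}}_{\partial_t}+\slashed{D}_{M_t}+R_t\bigr).
\]
What matters for us is that $R_t$ depends only on the first time-derivative of the Riemannian metric.

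Pulling everything back via the $G$-equivariant isometry $\tau$ from \eqref{eqtau}, the term $\nabla^{\widetilde{S}}_{\partial_t}$ becomes $\partial_t$, because $\tau_t$ is defined by parallel transport along the curve $t\mapsto(m,t)$; the slice operators $\slashed{D}_{M_t}$ become $D_t$ by the definition \eqref{eqdiracops}, assembling into $\DD$; and Clifford multiplication by $dt$ becomes a constant scalar matrix interchanging the two $S_0$-summands of $(S_0\boxtimes\com)^{\oplus 2}$. Matching the sign and matrix conventions chosen in \cite[\S 5]{bgm} with those in the definition of $\widetilde{P}_D$, the first two contributions assemble exactly into $\widetilde{P}_D$, while the remaining term becomes a fibrewise endomorphism
\[
A=(\tau^{-1}c(dt)\tau)\cdot(\tau^{-1}R_\bullet\tau)\in\Gamma\bigl(\End((S_0\boxtimes\com)^{\oplus 2})\bigr).
\]

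It remains to check $G$-equivariance and cocompact support. $G$-equivariance is immediate since every ingredient---the $G$-invariant metrics $\{g_t\}$, the induced spin connection, the Clifford action, the isometries $\tau_t$, and consequently $R_t$---is $G$-equivariant by naturality. For the support condition, note that $R_t$ vanishes identically whenever $\dot g_t\equiv 0$; since by hypothesis $g_t=g_0$ for $t\leq 0$ and $g_t=g_1$ for $t\geq 1$, we conclude that $A$ is supported in $M\times[0,1]$, which is cocompact because $M/G$ is compact. The only mild obstacle is the bookkeeping in the middle step, namely tracking sign and matrix conventions through the identification $\widetilde{S}\simeq(S_0\boxtimes\com)^{\oplus 2}$; but this is routine given that the correction $R_t$ is zeroth-order by construction, so whatever precise scalar it contributes, the resulting $A$ is necessarily an endomorphism.
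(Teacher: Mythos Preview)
Your argument is correct and follows essentially the same route as the paper: both invoke the hypersurface decomposition of $\slashed{D}_{M\times\re}$ from \cite{bgm} and identify the discrepancy with the zeroth-order curvature remainder $R$. The only stylistic difference is in how the zeroth-order property is confirmed: you read it off directly from the bgm formula, whereas the paper verifies it by checking that the principal symbols agree via the commutator computations $\tau^{-1}\circ[\slashed{D}_{M_t},f]\circ\tau=[D_t,f]$ and $[\nabla^{\widetilde{S}}_{\partial_t},f]=f'$.
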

\begin{proof}
	We note that outside of the cocompact set $M\times[0,1]$ the two operators coincide, since $\slashed{D}_{M\times\re}$ is of product form therein. In \cite[\S3]{bgm}, an expression for $\slashed{D}_{M\times\re}$ around the submanifold $M_t=M\times\{t\}$ is computed in terms of the spin Dirac operator $\slashed{D}_{M_t}$ on $M_t$ (see (\ref{eq spin dirac op mt})), and a choice of vector field $\nu$ on $M\times \re$ such that $\langle \nu,\nu\rangle =1$ and $\langle \nu, TM\rangle=0$. Choose $\nu=-\partial_t$, and represent the Clifford action by $\nu$ on $\widetilde{S}\big{|}_{M_t}=S_t\oplus S_t$ via
	\begin{equation*}
		c(\nu)=\begin{pmatrix}
			0 &i \\ i &0
		\end{pmatrix}.
	\end{equation*}
We see by \cite[Equation (11)]{bgm} that
\begin{equation*}
	\slashed{D}_{M\times\re}=\begin{pmatrix}
		0&-i\nabla_{\partial_t}^{\widetilde{S}}+i\slashed{D}_{M_t}\\-i\nabla_{\partial_t}^{\widetilde{S}}-i\slashed{D}_{M_t}&0
	\end{pmatrix} + R
\end{equation*}
around $M_t$, where $R\in \End(\widetilde{S})$ is an endomorphism depending on the mean curvature. The result follows from noting that the principal symbols of $\widetilde{P}$ and $\widetilde{P}_{D}$ are the same. Indeed, by picking a smooth function $f\in C^{\infty}(M\times\re)$ we can immediately calculate that $\tau^{-1}\circ [\slashed{D}_{M_t},f]\circ\tau=[D_t,f]$ and $[\nabla_{\partial_t}^{\widetilde{S}},f]=[\partial_t,f]=f'\in C^{\infty}(M\times\re)$, as desired.

\end{proof}
\begin{lemma}\label{lemmaequalityofindex}
The operator $\slashed{D}_{M\times\re}$ is $G$-invertible at infinity and
\begin{equation*}
\Ind_\Gamma(\widetilde{P}_{D})=\Ind_{\Gamma}(\slashed{D}_{M\times\re})\in K_0(\CS_r(G)).
	\end{equation*}
\end{lemma}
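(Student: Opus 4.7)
The plan is, first, to verify that $\slashed{D}_{M\times\re}$ is $G$-invertible at infinity directly, and then to establish the equality of indices via a straight-line homotopy that interpolates between $\widetilde{P}_D$ and $\widetilde{P}$.

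For $G$-invertibility at infinity, I would apply the Lichnerowicz formula for the spin Dirac operator on $(M\times\re, dt^2+g_t)$, namely
\[
\slashed{D}_{M\times\re}^2=(\nabla^{\widetilde{S}})^*\nabla^{\widetilde{S}}+\tfrac{1}{4}\widetilde{\kappa},
\]
where $\widetilde{\kappa}$ is the scalar curvature of the product metric. Outside the cocompact set $M\times[0,1]$, the metric is a Riemannian product $dt^2+g_0$ or $dt^2+g_1$, so its scalar curvature coincides pointwise with that of $g_0$ or $g_1$; by assumption these are both strictly positive. After conjugation by the isometry $\tau$ of \eqref{eqtau}, the same holds for $\widetilde{P}$, and Proposition \ref{propginvertatinf} applies to give $G$-invertibility at infinity.

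For the equality of indices, consider the linear path $\widetilde{P}_s:=\widetilde{P}_D+sA$ for $s\in[0,1]$, where $A=\widetilde{P}-\widetilde{P}_D$ is the $G$-equivariant, cocompactly supported (self-adjoint) endomorphism provided by Lemma \ref{lemmadifops}. Since $A$ vanishes outside $M\times[0,1]$, every $\widetilde{P}_s$ agrees with $\widetilde{P}_D$ there, so the Weitzenböck-type lower bound carries over uniformly in $s$. A single $G$-invariant, non-negative, cocompactly supported $f\in C^\infty(M\times\re)^G$, chosen large enough on $M\times[0,1]$ to dominate the bounded endomorphisms $sA\widetilde{P}_D+s\widetilde{P}_D A+s^2A^2$, then witnesses that $\widetilde{P}_s^2+f\geq r>0$ on $\ee^0$ for all $s\in[0,1]$ (cf.\ Lemmas \ref{positivel2diracschro} and \ref{positivepd}). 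Thus every $\widetilde{P}_s$ is $G$-invertible at infinity with this common auxiliary function.

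Finally, I would show that $s\mapsto F_s:=\widetilde{P}_s(\widetilde{P}_s^2+f)^{-1/2}$ is a norm-continuous path of operators in $\mathcal{L}(\ee^0(M\times\re,(S_0\boxtimes\com)^{\oplus 2}))$ that are invertible modulo compacts. Using the integral representation \eqref{eqintegralformula} and the second resolvent identity, one expresses $F_s-F_0$ as an integral whose integrand is a product of uniformly-in-$s$ bounded factors with the cocompactly supported endomorphism $A$; the uniform resolvent bound coming from $\widetilde{P}_s^2+f\geq r$ (independent of $s$) ensures convergence of this integral just as in \cite[Proposition 4.5]{haoguo}, and that each $F_s^2-1$ is compact as in \cite[Proposition 4.13]{haoguo}. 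Consequently $\{(\ee^0,F_s,1_\com)\}_{s\in[0,1]}$ is an operator homotopy of Kasparov $(\com,\CS_r(G))$-cycles, so $\Ind_G(\widetilde{P}_D)=\Ind_G(\widetilde{P})=\Ind_G(\slashed{D}_{M\times\re})$ in $K_0(\CS_r(G))$. The main obstacle will be the uniform-in-$s$, uniform-in-$\lambda$ bound required to justify exchanging the limit $s\to s_0$ with the integral in $\lambda$; this is essentially a parametrised version of the estimates in \cite[\S4]{haoguo}, made possible by the fact that $A$ is bounded and the resolvents are controlled uniformly along the path.
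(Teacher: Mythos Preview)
Your overall strategy is sound and takes a different route from the paper. The paper does not run a homotopy: it fixes a single $f$ witnessing $G$-invertibility at infinity for both operators, then shows directly that the difference
\[
\widetilde{P}_D(\widetilde{P}_D^2+f)^{-1/2}-\widetilde{P}(\widetilde{P}^2+f)^{-1/2}
\]
is compact, by expanding with the resolvent identity, the integral formula \eqref{eqintegralformula}, and the equivariant Rellich lemma (Proposition \ref{lemmaeqrellich}); the two Kasparov cycles thus differ by a compact perturbation. Your homotopy $s\mapsto F_s$ is an equally legitimate way to reach the same conclusion; the paper's approach is a bit more economical because it avoids verifying uniformity in $s$, while yours makes the deformation explicit.

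There is, however, one concrete error in your argument. The cross terms $sA\widetilde{P}_D+s\widetilde{P}_D A$ are \emph{not} bounded endomorphisms: since $A$ is zeroth-order and $\widetilde{P}_D$ is first-order, this is a first-order differential operator (its principal symbol is $A\,\sigma_{\widetilde{P}_D}+\sigma_{\widetilde{P}_D}\,A$, which has no reason to vanish). So you cannot simply ``choose $f$ large enough'' on $M\times[0,1]$ to dominate it, and the appeal to Lemmas \ref{positivel2diracschro}--\ref{positivepd} as written does not apply. The fix is to note that each $\widetilde{P}_s=\widetilde{P}_D+sA$ is itself a Dirac-type operator with the same Clifford symbol, hence satisfies its own Weitzenb\"ock formula $\widetilde{P}_s^2=\nabla_s^*\nabla_s+R_s$ with $R_s$ a genuine endomorphism. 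Outside $M\times[0,1]$ one has $\widetilde{P}_s=\widetilde{P}_D$, so $R_s=\widetilde{\kappa}_\bullet\geq c>0$ there; on the cocompact set $M\times[0,1]$ the family $(R_s)_{s\in[0,1]}$ is continuous, $G$-invariant, and hence uniformly bounded below. With this correction, a single $f$ does work uniformly in $s$, and the rest of your continuity argument goes through.
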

\begin{proof}

	By Lichnerowicz' formula and the methods in Subsection \ref{subseclocalgind}, we see that the operator $\slashed{D}_{M\times\re}$ and, hence, $\widetilde{P}$ are $\Gamma$-invertible at infinity. Moreover, we can find a $\Gamma$-invariant, non-negative, smooth, cocompactly supported function $f\in C^{\infty}(M\times\re)$ as in Lemma \ref{positivepd} such that both $\Ind_\Gamma(\widetilde{P}_{D})$ and $\Ind_\Gamma(\widetilde{P})$ are represented in $K\!K^0(\com,\CS_r(\Gamma))$, respectively, by the operators $\widetilde{P}_{D}(\widetilde{P}_{D}^2+f)^{-1/2}$ and $\widetilde{P}(\widetilde{P}^2+f)^{-1/2}$. Using Lemma \ref{lemmadifops}, we compute
	\begin{multline}\label{eqcompactdifference}
			\widetilde{P}_{D}(\widetilde{P}_{D}^2+f)^{-1/2}-\widetilde{P}(\widetilde{P}^2+f)^{-1/2}=\widetilde{P}_{D}(\widetilde{P}_{D}^2+f)^{-1/2}-(\widetilde{P}_{D}+A)((\widetilde{P}_{D}+A)^2+f)^{-1/2}\\
			=\widetilde{P}_{D}((\widetilde{P}_{D}+f)^{-1/2}-((\widetilde{P}_{D}+A)^2+f)^{-1/2})
			-A((\widetilde{P}_{D}+A)^2+f)^{-1/2}.
	\end{multline}
	The term $-A((\widetilde{P}_{D}+A)^2+f)^{-1/2}$ is in $\mathcal{K}(\ee^0(\widetilde{M}\times\re,(S_0\boxtimes\com)^{\oplus 2}))$, which follows from Lemma \ref{lemmadifops} and by the equivariant version of Rellich Lemma (Proposition \ref{lemmaeqrellich}). In order to treat the remaining term, we use a resolvent identity: for every $\lambda\geq 0$, there holds
\begin{multline}\label{eqresolvident}
			(\widetilde{P}_{D}+f+\lambda^2)^{-1}-((\widetilde{P}_{D}+A)^2+f+\lambda^2)^{-1} =\\ (\widetilde{P}_{D}+f+\lambda^2)^{-1}((\widetilde{P}_{D}+A)^2-
\widetilde{P}_{D})((\widetilde{P}_{D}+A)^2+f+\lambda^2)^{-1}.
	\end{multline}
	The first term on the right-hand side of (\ref{eqcompactdifference}) can be computed by using the integral form (\ref{eqintegralformula}). Using identity (\ref{eqresolvident}), we see that it is equal to
	\begin{equation}\label{eqintform}
		\frac{2}{\pi}\int_0^{\infty}\widetilde{P}_{D}(\widetilde{P}_{D}+f+\lambda^2)^{-1}(\widetilde{P}_{D}A+A\widetilde{P}_{D}+A^2)((\widetilde{P}_{D}+A)^2+f+\lambda^2)^{-1}\,d\lambda.
	\end{equation}
The result follows by noting that the three integrands in (\ref{eqintform}), corresponding to the three summands in the middle factor, are also compact operators.
\end{proof}

\begin{proposition}\label{prop SF APS}
In the setting of Subsection \ref{sec rho eta invar},
\[
\Sf_{\Gamma}(\DD)  = \Index_\Gamma^{\rm{loc}}(\slashed{D}_{M\times\re}) \in K_0(C^*_r(\Gamma))
\]

\end{proposition}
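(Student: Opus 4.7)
The plan is to chain together Theorem \ref{propfredindeqsfg1} with the various identifications between the equivariant indices established earlier in Section \ref{seclocind}, all applied to the product operator $\widetilde{P}_D$ on $M\times \re$.

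First, by Proposition \ref{prop Spin SFG} the family $\{D_t\}_{t\in\re}$ satisfies Assumptions \ref{assump1}--\ref{assump4}. Applying Theorem \ref{propfredindeqsfg1} with $X=\re$ (so that $P_X=-i\partial_t$) yields
\[
\Sf_\Gamma(\DD)=\Index(-i\partial_t - i\DD)\in K_0(\CS_r(\Gamma)).
\]
Under the fixed identification $K\!K^0(\com,\CS_r(\Gamma))\simeq K_0(\CS_r(\Gamma))$ from Subsection \ref{subsecfredind}, the right-hand side is precisely the image of the class of the odd self-adjoint product operator $\widetilde{P}_D$ via \eqref{eq KK PD}, so that $\Sf_\Gamma(\DD)=\Index(\widetilde{P}_D)$.

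Next, I assemble a chain of equalities between the equivariant indices of $\widetilde{P}_D$ and $\slashed{D}_{M\times\re}$. As noted just before Lemma \ref{lemmaequalityofindex}, the operator $\widetilde{P}_D$ is $\Gamma$-invertible at infinity: by \eqref{eqsquareop} together with the positive scalar curvature assumption on $g_0$ and $g_1$, the endomorphism $\widetilde{\kappa}_\bullet$ is uniformly positive outside $M\times[0,1]$, so Proposition \ref{propginvertatinf} applies. The same conclusion holds for $\slashed{D}_{M\times \re}$ via Lichnerowicz' formula on $(M\times \re, dt^2+g_t)$. Combining \eqref{eqfredindeqlocind}, Proposition \ref{guo2021}, and Lemma \ref{lemmaequalityofindex}, I then obtain
\begin{align*}
\Index(\widetilde{P}_D) &= \Index_\Gamma^{\rm{loc}}(\widetilde{P}_D) = \Ind_\Gamma(\widetilde{P}_D) \\
&= \Ind_\Gamma(\slashed{D}_{M\times\re}) = \Index_\Gamma^{\rm{loc}}(\slashed{D}_{M\times\re}).
\end{align*}
Concatenating this string with the previous step completes the proof.

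The point requiring most care is the bookkeeping between the different avatars of the equivariant index: namely, that the Fredholm index of the off-diagonal component $-i\partial_t - i\DD$ corresponds to $[\widetilde{P}_D]\in K\!K^0(\com,\CS_r(\Gamma))$ under the identification of Subsection \ref{subsecfredind}, and that the equality \eqref{eqfredindeqlocind} (drawn from \cite[Proposition 6.5]{guohochsvarg}) applies to both $\widetilde{P}_D$ and $\slashed{D}_{M\times\re}$ despite the non-cocompactness of the $\Gamma$-action on $M\times\re$. Once these compatibilities are verified, the argument is a direct concatenation of results from Section \ref{seclocind}.
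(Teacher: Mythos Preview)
Your proof is correct and follows essentially the same approach as the paper: invoke Proposition \ref{prop Spin SFG} and Theorem \ref{propfredindeqsfg1} to get $\Sf_\Gamma(\DD)=\Index(\widetilde{P}_D)$, then chain \eqref{eqfredindeqlocind}, Proposition \ref{guo2021}, and Lemma \ref{lemmaequalityofindex} to reach $\Index_\Gamma^{\rm{loc}}(\slashed{D}_{M\times\re})$. Your write-up spells out the intermediate equalities $\Index_\Gamma^{\rm{loc}}(\widetilde{P}_D)=\Ind_\Gamma(\widetilde{P}_D)$ and $\Ind_\Gamma(\slashed{D}_{M\times\re})=\Index_\Gamma^{\rm{loc}}(\slashed{D}_{M\times\re})$ a bit more explicitly than the paper, but the logical content is identical.
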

\begin{proof}
 By  Proposition \ref{prop Spin SFG}, Theorem \ref{propfredindeqsfg} and the comments above it, 
  the operator\begin{equation}\label{eqdiracsch}
	\widetilde{P}_{D}=\begin{pmatrix}0&-i\partial_t+i\DD\\-i\partial_t-i\DD&0
    \end{pmatrix}
\end{equation}
is regular self-adjoint and Fredholm on $(\ee^0(S_0)\otimes L^2(\re,\com))^{\oplus 2}$, and
\begin{equation}\label{eqsfeqind}
	\Sf_\Gamma(\DD) = \Index(\widetilde{P}_{D}).
\end{equation}
So the claim follows from
 \eqref{eqfredindeqlocind}, Proposition \ref{guo2021} and Lemma \ref{lemmaequalityofindex}.
\end{proof}

\subsection{Higher APS-index Theorems}\label{subsecdelocrhoeta}

Theorems \ref{thm eta} and \ref{theoremmain} now follow from Proposition \ref{prop SF APS} and higher APS-index theorems.

\begin{proof}[Proof of Theorem \ref{thm eta}]
In the setting of this theorem, several higher APS-index theorems imply existence of the algebra $\A(G)$ in \eqref{eq tauh K}, convergence of the relevant delocalised $\eta$-invariants, and the equality
\begeq{eq APS eta}
\tau_h(\Index_G^{\rm{loc}}(\slashed{D}_{M\times\re} ) )= \int_{(M \times [0,1])^h} f_h \frac{\hat A((M \times [0,1])^h)}{\det(1-hR^{\mathcal{N}})^{1/2}} -\frac{1}{2}(\eta_h(D_1) - \eta_h(D_0)).
\eneq
Here $f_h$ is a cutoff function for the action by $Z$ on $(M \times [0,1])^h$, and $R^{\mathcal{N}}$ is the curvature of the connection on the normal bundle $\mathcal{N} \to (M \times [0,1])^h$ to $(M \times [0,1])^h$ in $M \times [0,1]$ induced by the Levi-Civita connection. More precisely, we apply
\begin{itemize}
\item \cite[Theorem 5.3]{xieyu} in the case where $G$ is discrete and finitely generated, the conjugacy class of $h$ has polynomial growth, and $G$ acts freely on $M$;
\item \cite[Corollary 2.10]{HWW22} in the case where $G$ is discrete and finitely generated, the conjugacy class of $h$ has polynomial growth, and $G$ has slow enough exponential growth;
\item \cite[Theorem 4.39]{PPST21} in the case where $G$ is a connected, real semisimple Lie group and $g$ is semisimple; and 
\item \cite[Corollary 2.10]{HWW22} in the case where $h=e$.
\end{itemize}
Then one notes that by multiplicativity of the $\hat A$-form and the fact that $\hat A(\mathbb{R}) = 1$, 
\[
\hat A((M \times [0,1])^h) = \hat A(M^h \times [0,1]) = p^*\hat A(M^h), 
\]
where $p\colon M^h \times [0,1] \to M^h$ is projection onto the first factor. Letting $N\to  M^h$ be the normal bundle to $M^h$ in $M$, we similarly have
\begin{equation*}
	\det(1-hR^{\mathcal{N}})=\det(1-hR^N),
\end{equation*}
where $R^N$ is the curvature of the induced connection on $N$. Thus, we obtain the equality
\[
\tau_h(\Index_G^{\rm{loc}}(\slashed{D}_{M\times\re} ) )= \int_{M^h} f_h \frac{p^*\hat A(M^h)}{\det(1-hR^{N})^{1/2}} -\frac{1}{2}(\eta_h(D_1) - \eta_h(D_0)).
\]
The result now follows from Proposition \ref{prop SF APS}.
\end{proof}

\begin{proof}[Proof of Theorem \ref{theoremmain}]
By \cite[Theorem 1.14]{piazzaschick}, we have
\begeq{eq APS rho}
\iota_*\bigl(\Index_{\Gamma}^{\rm{loc}}(\slashed{D}_{M\times\re}) \bigr) =  (j_1)_*(\rho(g_1)) - (j_0)_*(\rho(g_0)).
\eneq
The result follows from \eqref{eq APS rho} and Proposition \ref{prop SF APS}.
\end{proof}
\begin{remark}
See \cite[Theorem A]{xieyu2} for a version of \eqref{eq APS rho} that includes the case of odd-dimensional manifolds. That result is stated in terms of Yu's localisation algebras, but the link with the formulation involving the algebra $D^*(M)^{\Gamma}$ is given in  \cite[Section 6]{xieyu2}.
\end{remark}

\bibliographystyle{alpha}
\bibliography{references.bib}

\end{document}